\documentclass{article}
\usepackage{amsmath,amssymb, amsthm,amsfonts}
\usepackage[dvipsnames]{xcolor}
\usepackage[titletoc,title]{appendix}
\numberwithin{equation}{section}
\newtheorem{theorem}{Theorem}[section]
\newtheorem{lemma}{Lemma}[section]
\newtheorem{remark}{Remark}[section]
\newtheorem{corollary}{Corollary}[section]
\newtheorem{assumption}{Assumption}[section]
\usepackage{graphicx} 
\newcommand{\bfv}{\mathbf{v}}
\newcommand{\bfu}{\mathbf{u}}
\newcommand{\bfU}{\mathbf{U}}

\title{Long-Time Nonlinear Stability of Large-Amplitude Radial Rarefactive Flows for the Multidimensional Compressible Euler Equations}

\author{
Faris El-Katri\thanks{Department of Mathematics, University of Kansas, Lawrence, KS 66045 (elkatri@ku.edu).}
}

\date{}

\begin{document}

\maketitle

\begin{abstract}
In this paper, we study multidimensional perturbations of radially symmetric rarefactive flows for the compressible Euler equations. No assumptions on the smallness of the background are imposed. We derive lower bounds on both the classical lifespan and stability timespan of the perturbed solution. These lower bounds are demonstrated to tend to infinity as the size of the perturbation approaches zero. Furthermore, owing to the rarefactive nature of the background, the resulting estimate is shown to be substantially longer than the corresponding estimate for a general classical background with bounded first derivatives. 
\end{abstract}

\noindent
\textbf{2020 Mathematical Subject Classification:} 35L65, 35L67, 76N15.

\medskip

\noindent
\textbf{Key words:} Compressible Euler equations, rarefactive flows, nonlinear stability, lifespan estimates, radial symmetry, point vacuum.

\tableofcontents

\section{Introduction}

This paper examines the multidimensional stability of radially symmetric rarefactive solutions to the compressible Euler equations, 

\begin{align}\label{Eqn:Euler_Full_tensor}
\begin{aligned}
&\partial_t \rho+\nabla\cdot(\rho \mathbf u)=0,\\[0.5em]
&\partial_t(\rho \mathbf u)
+\nabla\cdot(\rho \mathbf u\otimes \mathbf u+pI)=0,\\[0.5em]
&\partial_t E+\nabla\cdot\big((E+p)\mathbf u\big)=0. 
\end{aligned}
\end{align}
The pressure, specific internal energy, and total energy density are given by

\begin{align*}
    p&:=Ke^S\rho^\gamma, 
    \\
    e&:=\frac{p}{(\gamma-1)\rho}, 
    \\
    E&:=\rho e+\frac{1}{2}\rho |\mathbf u|^2, 
\end{align*} 
where $K>0$, $1<\gamma<3$, and $S$ denotes the normalized specific entropy. The compressible Euler equations are one of the quintessential examples of quasilinear hyperbolic systems of conservation laws. Their characteristic structure was central to the classical theory of nonlinear hyperbolic conservation laws \cite{L_1,CF} and remains essential to the modern analysis of hyperbolic systems \cite{B_1,D_1,M_1}. 

A principal concern in the study of the Euler equations is the breakdown of classical solutions and their weak continuation past the time of singularity formation. Early results on shock formation focused on the case of one spatial dimension \cite{A1,J_1}. Later work in the multidimensional setting established the breakdown of classical solutions primarily through energy-based methods \cite{S_1,S_2}. On the other hand, it was shown that small perturbations of size $\varepsilon$ around a constant state have a classical lifespan satisfying $T^\varepsilon_{\mathrm{life}}\gtrsim\varepsilon^{-1}$ \cite{S_2}. More recent work by Christodoulou gave rise to the geometric formulation of shock formation in three spatial dimensions \cite{C_1}. Notably, this allowed for the analysis of shock formation arising from perturbations of simple plane waves \cite{LS}. Later work constructed stable point shocks without symmetry assumptions using modulated self-similar variables \cite{BSV_1}. In particular, it was shown that, for a class of compressions of size $1/\varepsilon$ around a constant state, the compressible Euler equations generically form a $C^{1/3}$ cusp within a time of order $\varepsilon$. 

Paradoxically, while smooth initial data may evolve into shock formation in finite time, it is also the case that discontinuities in initial data may be resolved by continuous rarefaction fans. In contrast to the compressive mechanism inducing shock formation, rarefaction waves correspond to expansive families of characteristics. They arise in the one-dimensional Riemann problem, where jump discontinuities give rise to rarefaction fans instead of shocks. Their stability has been examined extensively in both viscous and inviscid systems. The stability of rarefaction wave solutions to the compressible Navier--Stokes equations was established in \cite{LZ_1,NYZ_1}. Later works analyzed the stability and uniqueness of rarefaction waves in the multidimensional setting within classes of entropy solutions \cite{CC_1,FK_1}, while more recent work proved their nonlinear stability under multidimensional perturbations using energy methods \cite{LY1,LY2}. 

Parallel to the study of shock formation is the problem of characterizing which classes of initial data lead to singularity formation. In one spatial dimension, data along each characteristic family may be classified as compressive if the derivative of the corresponding Riemann invariant is negative somewhere, and rarefactive otherwise. Owing to the time-dependent lower bound on the density \cite{Chen_1}, it was shown via non-constructive methods that singularity formation in the one-dimensional isentropic Euler equations occurs if and only if the initial data contain a compressive region, regardless of its magnitude \cite{CPZ_1}. While a complete characterization is readily available in the one-dimensional case, the spherically symmetric case contains additional structure. Indeed, owing largely to the radial source terms, a na\"ive approach based directly on the derivatives of the Riemann variables is not successful. However, by requiring that the stationary solution be neither compressive nor rarefactive, the authors in \cite{CEHS_1} derived modified gradient variables whose signs encode the desired compressive and rarefactive regimes. In that setting, it was shown that sufficiently strong compression leads to singularity formation. This compressive aspect is closely related to earlier work on radially symmetric expanding flows \cite{CCW}. The principal rarefactive result of \cite{CEHS_1} is that initial data rarefactive with respect to both modified gradient variables give rise to global-in-time classical solutions without any smallness assumption on the amplitude of the data, thus avoiding both shock and implosion singularities. 

Related global smooth solutions to the compressible Euler equations were constructed by Grassin for small initial density by imposing conditions on the spectrum of the initial velocity gradient \cite{Grassin}. In contrast, \cite{CEHS_1} allows for arbitrarily large amplitude initial data, and the stability of the corresponding rarefactive profiles under multidimensional perturbations remains largely unexplored. Indeed, one of the two rarefactive profiles in \cite{CEHS_1} contains a point vacuum at the origin, whose behavior under perturbation introduces additional complications. In this paper, we prove a lower bound on the nonlinear stability timespan of large-amplitude rarefactive profiles under multidimensional perturbations. Remarkably, we demonstrate that the expansive structure of the underlying rarefactive profile improves the stability estimates beyond those obtained for perturbations of a general classical background with bounded first derivatives. 

More precisely, let $(\bar h, \bar\bfu)$ denote a radially symmetric rarefactive flow satisfying the assumptions of \cite{CEHS_1}. We consider multidimensional perturbations $(\sigma_0,\bfv_0,\theta_0)$ of size $\varepsilon$ in $H^N(\mathbb{R}^3)$ and define the perturbed initial data by 

\begin{align*}
    &H_0:=\bar h_0+\sigma_0 , 
    \\
    &\bfU_0:=\bar\bfu_0+\bfv_0, 
\end{align*} 
where the initial sonic speed, $H_0$, is strictly positive everywhere except possibly at the origin. We then prove that, for $\varepsilon\ll1$, the perturbed flow remains classical and close to the radial background up to a time $T^\varepsilon$ satisfying 

\begin{align*}
    T^\varepsilon_{\mathrm{life}}\geq T^\varepsilon
    \gtrsim
    \left(\log\frac{1}{\varepsilon}
    \right)^{\mathcal C_N}.
\end{align*} 
We further note that $T^\varepsilon$ tends to infinity as $\varepsilon\rightarrow0$, and the exponent $\mathcal{C}_N$ is determined by the derivative bounds of the rarefactive background. While this estimate is weaker than the $1/\varepsilon$ lower bound for perturbations of a constant state, there is a marked difference between the two regimes. The $1/\varepsilon$ estimate is sharp, in general, for perturbations with vorticity and cannot be improved without additional assumptions \cite{LS}. This work, by comparison, perturbs a class of arbitrarily large data shown to produce global classical solutions. These data are free from impositions of self-similarity or affine structure. Furthermore, owing to their rarefactive structure, the derived estimate is substantially stronger than the general $\log\log\frac{1}{\varepsilon}$ timespan obtained by applying the energy methods contained herein to perturbations of an arbitrary global classical solution with uniformly bounded-in-time first derivatives. 

The proof will proceed as follows. We first refine the derivative estimates for the radially symmetric rarefactive background. The modified gradient variables introduced in \cite{CEHS_1} yield decay of the first derivatives at a rate of $\mathcal{O}(1+t)^{-1}$. We then examine the characteristic equations for higher-order derivatives and show inductively that the $m$-th order derivatives of the background grow at most polynomially in time, 

\begin{align*}
    \|\nabla^m \bar h(t,\cdot)\|_{L^\infty}
    +\|\nabla^m \bar{\mathbf u}(t,\cdot)\|_{L^\infty}
    \lesssim (1+t)^{B_m},
\end{align*}
for constants $B_m>0$. We then derive the perturbation equations and place them in symmetric-hyperbolic form. Noting that the symmetrizer remains uniformly coercive even in the presence of vacuum, Kato's theorem \cite{Kato1975} yields local existence. We then define a cumulative Sobolev energy and prove its coercivity under a mild bootstrap assumption. Combining this with the background derivative estimates and Sobolev embedding, we obtain the differential inequality 

\begin{align*}
    \frac{d}{dt}\mathcal{E}_N(t) 
    \lesssim 
    (1+t)^{2B_{N+1}^*} 
    \mathcal{E}_N(t)\left(  
    1+\mathcal{E}_N(t)^{1/2}\right)
    +\mathcal{E}_N(t)^2. 
\end{align*} 
Gr\"onwall's inequality then gives us the desired stability estimate up to a time at least of order 

\begin{align*}
    \left(\log\frac1\varepsilon\right)^{\mathcal C_N}. 
\end{align*} 
Combining this a priori estimate with Kato's local existence theory and a standard continuation argument proves the claimed lower bound on the maximal lifespan of the solution. 

The remainder of the paper is organized as follows. In Section 2, we analyze the radially symmetric rarefactive background. We first recall the relevant results from \cite{CEHS_1}. These results are refined to quantify the rate of decay of the modified gradient variables. Once the background has been fully introduced, we provide a rigorous statement of our main results. The remainder of this section is dedicated to obtaining time-dependent upper bounds on the higher-order derivatives of the radially symmetric rarefactive background. 

In Section 3, we develop the framework necessary for analyzing the evolution of the perturbed system. Using our definitions of the perturbation variables and the fact that the radially symmetric rarefactive background solves (\ref{Eqn:Euler_Full_tensor}), we obtain an equivalent system of evolution equations for the perturbation variables. We then define an appropriate Sobolev energy and show that it is coercive with respect to the $H^N(\mathbb{R}^3)$ norm of the perturbation. The remainder of this section is devoted to obtaining the desired closed differential inequality for the cumulative energy. This is obtained primarily through commutator estimates and Sobolev embedding. 

In Section 4, we close our bootstrap assumptions and prove the desired stability timespan estimate. From standard local existence theory, it is known that symmetric-hyperbolic systems have some positive lifespan under relatively mild conditions. Thus, the first part of Section 4 is dedicated to establishing local existence even in the presence of vacuum at the origin and infinity. Then, using the differential inequality derived in Section 3, we calculate the claimed stability timespan. Finally, using a standard continuation argument, we demonstrate classical existence until this time as well. This completes the proof of the main theorem.

\section{The Radial Rarefactive Background} 

First, we summarize existing results in the literature. In particular, Lemmas \ref{Thm:SupersonicInvariance}--\ref{Thm:wUpperBound} and Theorem \ref{Thm:Radial_Rarefactive_Existence} were proven in \cite{CEHS_1}. We first seek to rewrite (\ref{Eqn:Euler_Full_tensor}) in a form better suited for our calculations. Classical solutions to (\ref{Eqn:Euler_Full_tensor}) equivalently solve 

\begin{align}\label{Eqn:Euler_Primitive}
\begin{aligned}
    &\rho_t+\mathbf u\cdot\nabla\rho
    +\rho\nabla\cdot\mathbf u=0,
    \\[0.5em]
    &\mathbf u_t+(\mathbf u\cdot\nabla)\mathbf u
    +\frac{1}{\rho}\nabla p=0,
    \\[0.5em]
    &S_t+\mathbf u\cdot\nabla S=0.
\end{aligned}
\end{align} 
Furthermore, it is convenient to replace the density, $\rho$, with the sonic speed, $h$, where 

\begin{align*}
    h:=\sqrt{p_\rho}. 
\end{align*} 
This yields 

\begin{align}\label{Eqn:Euler_huS}
\begin{aligned}
    &h_t+\mathbf u\cdot\nabla h
    +ah\nabla\cdot\mathbf u=0,
    \\[0.5em]
    &\mathbf u_t+(\mathbf u\cdot\nabla)\mathbf u
    +\frac{1}{a}h\nabla h
    -\frac{1}{2a\gamma}h^2\nabla S=0,
    \\[0.5em]
    &S_t+\mathbf u\cdot\nabla S=0. 
\end{aligned}
\end{align} 
where 

\begin{align*}
    a:=\frac{\gamma-1}{2}. 
\end{align*}


Under spherical symmetry and constant entropy, (\ref{Eqn:Euler_huS}) reduces to a system of two equations over the half-line 

\begin{align*}
\begin{aligned}
    &h_t+uh_r+\frac{\gamma-1}{2}hu_r 
    =-\frac{(\gamma-1)uh}{r}, 
    \\
    &u_t+uu_r+\frac{2}{\gamma-1}hh_r=0,  
\end{aligned}
\end{align*} 
where 

\begin{align*}
    &\bfu(t,x)=u(t,r)e_r, 
    \\
    &h(t,x)=h(t,r), 
    \\
    &r=|x|. 
\end{align*}
This system may be diagonalized with eigenvalues $c_1,c_2$ and Riemann variables $w,z$. 

\begin{align*}
    &z:=u-\frac{2}{\gamma-1}h, 
    \\
    &w:=u+\frac{2}{\gamma-1}h, 
    \\
    &c_1:=u-h=\frac{3-\gamma}{4}w 
    +\frac{\gamma+1}{4}z, 
    \\
    &c_2:=u+h=\frac{\gamma+1}{4}w 
    +\frac{3-\gamma}{4}z. 
\end{align*}
These, in turn, give rise to directional derivatives over which our Riemann variables are transported, 

\begin{align}
    &\partial_1:=\partial_t+c_1\partial_r, 
    \label{Eqn:partial_1}
    \\
    &\partial_2:=\partial_t+c_2\partial_r, 
    \label{Eqn:partial_2}
    \\
    &\partial_1z=\frac{\gamma-1}{4r}\left( 
    w^2-z^2\right), 
    \label{Eqn:partial_1z}
    \\
    &\partial_2w=-\frac{\gamma-1}{4r}\left( 
    w^2-z^2\right). 
    \label{Eqn:partial_2w}
\end{align} 
For later convenience, we also define flow maps. 

\begin{align*}
    &\psi_t:=c_1\circ\psi ,
    \\
    &\eta_t:=c_2\circ\eta ,
    \\
    &\psi(0,y)=\eta(0,y)=y .
\end{align*} 
These allow us to rewrite Equations (\ref{Eqn:partial_1})--(\ref{Eqn:partial_2w}), 

\begin{align*}
    &\partial_t(z\circ\psi)=\left(\frac{\gamma-1}{4r} 
    (w^2-z^2)\right) \circ\psi, 
    \\
    &\partial_t(w\circ\eta)=\left(-\frac{\gamma-1}{4r}(w^2-z^2)\right)\circ\eta. 
\end{align*} 
Using these, we define the domain of a given characteristic triangle, 

\begin{align*} 
    &\Omega_t([a,b]):=\eta(t,[a,b])\cap\psi(t,[a,b]), 
    \\
    &\Omega(T,[a,b]):= \bigcup_{0\leq t\leq T} 
    \{t\}\times \Omega_t([a,b]). 
\end{align*}
We then define modified gradient variables, $\alpha,\beta$, 

\begin{align*}
    &\beta:=z_r-\frac{2uh}{rc_1} ,
    \\
    &\alpha:=w_r+\frac{2uh}{rc_2} .
\end{align*} 
The signs of these gradient variables determine the wave character. A profile is rarefactive in the $1$- and $2$-characteristic families when $\beta\geq0$ and $\alpha\geq0$, respectively, and compressive in these families when $\beta<0$ and $\alpha<0$, respectively. For the supersonic expanding wave, it was shown in \cite{CEHS_1} that the fully rarefactive and fully compressive regimes are preserved under the classical flow. Specifically, solutions with initial data rarefactive in both gradient variables remain rarefactive for all time, and solutions with initial data compressive in both gradient variables remain compressive during the lifespan of the classical solution. These variables evolve along the $1$- and $2$-characteristics, respectively, according to  

\begin{align*}
    &\partial_1\beta=-\frac{\gamma+1}{4}\beta^2 
    -\frac{3-\gamma}{4}\alpha\beta 
    +A_1\alpha-B_1\beta,
    \\
    &\partial_2\alpha=-\frac{\gamma+1}{4}\alpha^2 
    -\frac{3-\gamma}{4}\alpha\beta 
    +A_2\beta-B_2\alpha, 
\end{align*} 
with variable coefficients given by 

\begin{align*}
    &A_1:=\frac{c_2}{rc_1^2}\left( 
    \frac{\gamma-1}{2}u^2-h^2\right),  
    \\
    &A_2:=\frac{c_1}{rc_2^2}\left(\frac{\gamma-1}{2}
    u^2-h^2\right), 
    \\
    &B_1:=\frac{2}{rc_1^2}\left( 
    \frac{\gamma-1}{4}u^3-\frac{1}{2}h^3 
    -\frac{\gamma-1}{4}u^2h 
    +\frac{1}{2}uh^2 
    +\frac{uhc_1}{c_2}\left(h+\frac{\gamma-1}{2}u \right)\right), 
    \\
    &B_2:=\frac{2}{rc_2^2}\left( 
    \frac{\gamma-1}{4}u^3+\frac{1}{2}h^3 
    +\frac{\gamma-1}{4}u^2h 
    +\frac{1}{2}uh^2 
    +\frac{uhc_2}{c_1}\left(h-\frac{\gamma-1}{2}u \right)\right). 
\end{align*} 
As shown in \cite{CEHS_1}, whenever $u\geq\frac{2}{\gamma-1}h$, these coefficients are nonnegative and satisfy 

\begin{align*}
    &A_i,B_i\geq 0, \\
    &B_i-A_i\geq 0.  
\end{align*}
Our results will be proven in the context of the supersonic radially expanding wave. To that end, we first prove the invariance of the supersonic expanding condition under the classical flow. 

\begin{lemma}\label{Thm:SupersonicInvariance}
    Let $u_0(r),h_0(r)\in C^1([a,b])$, where $0<a<b$, satisfy the following inequalities, 

    \begin{align*}
        &u_0(r)\geq \frac{2}{\gamma-1}h_0(r), 
        \\
        &h_0(r)=\frac{\gamma-1}{4}(w_0(r)-z_0(r))>0 . 
    \end{align*} 
    Further suppose that their time evolution, $u(t,r),h(t,r)$, remain classical over 
    $\Omega(T,[a,b])$. Then, the following holds for all $(t,r)\in\Omega(T,[a,b])$, 

    \begin{align*}
        u(t,r)\geq \frac{2}{\gamma-1}h(t,r). 
    \end{align*}
\end{lemma}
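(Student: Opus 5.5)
The plan is to recast the inequality as a sign condition on the Riemann variable $z$ and show that this sign is transported along $1$-characteristics. By definition $z=u-\frac{2}{\gamma-1}h$, so the hypothesis reads $z_0\geq0$ on $[a,b]$ and the conclusion reads $z\geq0$ on $\Omega(T,[a,b])$. The evolution law (\ref{Eqn:partial_1z}) gives $\partial_1 z=\frac{\gamma-1}{4r}(w^2-z^2)$, so I will work along the flow map $\psi$, where
\begin{align*}
    \partial_t(z\circ\psi)=\left(\frac{\gamma-1}{4r}(w-z)(w+z)\right)\circ\psi .
\end{align*}

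The key algebraic step is to write $w+z=(w-z)+2z$. This gives
\begin{align*}
    \partial_1 z=\frac{\gamma-1}{4r}(w-z)^2+\frac{\gamma-1}{2r}(w-z)\,z .
\end{align*}
The first term is nonnegative whenever $r>0$, with no sign information needed. The equation is therefore linear in $z$ with a nonnegative forcing term, and the coefficient $k:=\frac{\gamma-1}{2r}(w-z)=\frac{2h}{r}$ is continuous along characteristics because the solution is classical.

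Fix $(t,r)\in\Omega(T,[a,b])$. I will trace back the $1$-characteristic through this point to a foot $y=\psi^{-1}(t,r)\in[a,b]$. This is possible because $\Omega_t([a,b])\subset\psi(t,[a,b])$ by definition, and because $\Omega(T,[a,b])$ is a domain of determinacy, so the whole $1$-characteristic segment from $y$ stays inside it. Multiplying by the integrating factor $\exp\!\left(-\int_0^s k\circ\psi(\tau,y)\,d\tau\right)$ and integrating in time gives
\begin{align*}
    z(t,\psi(t,y))=e^{\int_0^t k\circ\psi\,d\tau}\left(z_0(y)+\int_0^t e^{-\int_0^s k\circ\psi\,d\tau}\,\frac{\gamma-1}{4r}(w-z)^2\circ\psi\,ds\right)\geq0 .
\end{align*}
Hence $u\geq\frac{2}{\gamma-1}h$ at $(t,r)$.

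The main obstacle is justifying that the characteristic stays at positive radius, so that the $1/r$ coefficients are finite and the integrating factor is well defined. I would argue this from the assumption that the solution is classical on $\Omega(T,[a,b])$, which implicitly requires the region to avoid the singular set $r=0$ of the radial system. Alternatively, one can observe directly that $c_1=u-h\geq\frac{3-\gamma}{\gamma-1}h\geq0$ as long as $z\geq0$. A continuity argument then shows that $1$-characteristics move outward, so $r\geq a>0$ along them.

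To avoid circularity in that continuity argument, I would run the integrating-factor argument on the maximal time interval on which $r$ stays bounded below along the characteristic, and then close it by continuity. Positivity of $h$ is not needed for the sign argument itself; only the square $(w-z)^2$ enters the forcing.
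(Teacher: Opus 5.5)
Your proof is correct, and it takes a different route from the paper's.

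The paper argues by contradiction with a first-crossing (maximum-principle) argument. If $z<0$ somewhere, tracing back along $\psi$ gives a time $t^*$ with $z\circ\psi=0$ and $\partial_t(z\circ\psi)\le 0$. But at $z=0$ the equation gives $\partial_t(z\circ\psi)=\frac{4h^2}{(\gamma-1)r}>0$, a contradiction.

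You instead split $w^2-z^2=(w-z)^2+2z(w-z)$ and write $\partial_1 z=\frac{2h}{r}z+\frac{4h^2}{(\gamma-1)r}$. This is a linear equation along $\psi$ with nonnegative forcing, and you conclude from Duhamel's formula.

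Your route buys two things.
\begin{itemize}
\item It needs only that the forcing is nonnegative, not strictly positive. So it works even if $h$ vanishes at the touching point. The paper needs $h>0$ there and asserts it ``from the lower bound on our density,'' although the lemma assumes only $h_0>0$.
\item It gives the quantitative bound $z\circ\psi\ge z_0\exp\bigl(\int_0^t \frac{2h}{r}\circ\psi\,d\tau\bigr)$, not just a sign.
\end{itemize}

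The paper's route is shorter, but it leaves two facts implicit that you make explicit.
\begin{itemize}
\item The backward $1$-characteristic stays inside $\Omega(T,[a,b])$. This uses $c_2\ge c_1$, i.e.\ $h\ge 0$, to compare it with the left boundary $\eta(\cdot,a)$.
\item The characteristic stays at positive radius, so the $1/r$ coefficients make sense. Your bootstrap handles this via $c_1\ge\frac{3-\gamma}{\gamma-1}h\ge 0$ (using $\gamma<3$), which keeps $r\ge a$.
\end{itemize}
Of your two justifications for $r>0$, the second (outward motion of $1$-characteristics plus continuity) is the one that actually closes the argument. The first (that classicality ``implicitly'' avoids $r=0$) is only heuristic.
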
 

\begin{proof}
    Suppose by way of contradiction that there exists some $(t_1,r_1)\in\Omega(T,[a,b])$ such that $u(t_1,r_1)<\frac{2}{\gamma-1}h(t_1,r_1)$. Tracing our characteristics backward, this implies there exists some $t^*,y^*$ such that the following hold,  

    \begin{align*}
        &z\circ\psi(t^*,y^*)=0 ,
        \\
        &\left.\partial_t(z\circ\psi(t,y^*))
        \right|_{t=t^*} \leq 0 .
    \end{align*} 
    However, from the lower bound on our density, we observe the following, 

    \begin{align*}
        \left.\partial_t(z\circ\psi(t,y^*))
        \right|_{t=t^*}&=\left(\frac{\gamma-1}{4r} 
        (w^2-z^2)\right)\circ\psi(t^*,y^*)
        \\
        &= \left(\frac{\gamma-1}{4r}w^2 
        \right)\circ\psi(t^*,y^*)
        \\
        &=\left(\frac{\gamma-1}{4r}\left( 
        z+\frac{4}{\gamma-1}h\right)^2 
        \right)\circ\psi(t^*,y^*)
        \\
        &=\left(\frac{4h^2}
        {(\gamma-1)r}\right)\circ\psi(t^*,y^*)>0.  
    \end{align*} 
    This implies a contradiction. 
\end{proof} 

We then seek to bound our variable coefficients, $A_i,B_i$. To that end, we first obtain an upper bound on $w$. 

\begin{lemma}\label{Thm:wUpperBound}
    Let $w_0(r),z_0(r)\in C^1([a,b])$, where $0<a<b$, satisfy the following inequalities, 

    \begin{align*}
        &z_0(r) \geq 0, 
        \\
        &h_0(r)>0 . 
    \end{align*} 
    Further suppose that their time evolution, $w(t,r),z(t,r)$, remain classical over 
    $\Omega(T,[a,b])$. Then, the following holds for all $(t,r)\in\Omega(T,[a,b])$, 

    \begin{align*}
        \lVert w(t,\cdot)\rVert_{L^\infty
        (\Omega_t([a,b]))} 
        \leq \lVert w_0\rVert_{L^\infty([a,b])} . 
    \end{align*}
\end{lemma}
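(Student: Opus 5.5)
The plan is to work along the $2$-characteristics using the transport law (\ref{Eqn:partial_2w}):
\begin{align*}
\partial_t(w\circ\eta)=\left(-\frac{\gamma-1}{4r}(w^2-z^2)\right)\circ\eta .
\end{align*}
The sign of the right-hand side is governed by $w^2-z^2=(w-z)(w+z)$. Here $w-z=\frac{4}{\gamma-1}h$ and $w+z=2u$.

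First I will show that $z\ge 0$ persists on $\Omega(T,[a,b])$. The hypothesis $z_0\ge0$ is exactly $u_0\ge\frac{2}{\gamma-1}h_0$. Together with $h_0>0$, this lets me invoke Lemma \ref{Thm:SupersonicInvariance}, which gives $z=u-\frac{2}{\gamma-1}h\ge0$ throughout the domain.

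Second, I need $h\ge0$, and in fact $h>0$, on the domain. I plan to get this from the $h$ equation written along particle paths, $\frac{d}{dt}h=-ah\nabla\cdot\bfu$ in the radial form. This is linear in $h$, so along each particle path $h$ equals its initial value times an exponential factor, which is positive while the solution is classical. Alternatively, one can simply note that positivity of $h$ is implicit in the solution remaining classical in the $(h,u)$ description.

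Combining the two steps gives $w=z+\frac{4}{\gamma-1}h\ge z\ge0$. Hence $w^2-z^2=(w-z)(w+z)\ge0$, since both factors are nonnegative. The right-hand side of the transport equation is therefore nonpositive, because $r>0$ on the domain. Indeed, characteristics emanating from $[a,b]$ with $a>0$ move with speed $c_2\ge c_1=u-h\ge0$, using $u\ge\frac{2}{\gamma-1}h\ge h$ when $\gamma<3$, so $r\ge a>0$ along them. It follows that $t\mapsto w\circ\eta(t,y)$ is nonincreasing.

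Finally, I take any $(t,r)\in\Omega(T,[a,b])$. Since $\Omega_t([a,b])\subset\eta(t,[a,b])$, I can write $r=\eta(t,y)$ for some $y\in[a,b]$. I also need the full $2$-characteristic segment $\{(s,\eta(s,y)):0\le s\le t\}$ to lie in the region where the above sign information holds. This should follow from the characteristic-triangle structure: $\eta$ from $y$ stays to the right of $\psi(\cdot,a)$ and to the left of $\eta(\cdot,b)$. Then
\begin{align*}
0\le w(t,r)\le w_0(y)\le\|w_0\|_{L^\infty([a,b])} ,
\end{align*}
which yields the claim, the lower bound $0$ taking care of the absolute value.

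I expect the main obstacle to be this domain bookkeeping. Specifically, it is checking that the backward $2$-characteristic from a point of $\Omega_t([a,b])$ remains inside $\Omega(T,[a,b])$, so that Lemma \ref{Thm:SupersonicInvariance} applies along the whole segment, together with a clean justification of the positivity of $h$. If the triangle containment proves awkward, I will instead argue by contradiction at the first time the bound fails, as in the proof of Lemma \ref{Thm:SupersonicInvariance}.
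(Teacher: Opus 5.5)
Your argument is the same as the paper's. You invoke Lemma \ref{Thm:SupersonicInvariance} to get $z\ge0$, deduce $w^2-z^2=(w-z)(w+z)\ge0$, and conclude from (\ref{Eqn:partial_2w}) that $w$ is nonincreasing along the $2$-characteristics. Your extra observation $w\ge z\ge0$ is a useful addition: it is what actually controls $|w|$ in the $L^\infty$ norm, and the paper's one-line proof leaves this implicit.

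One correction to your domain bookkeeping. Since $c_2>c_1$, we have $\Omega_t([a,b])=[\eta(t,a),\psi(t,b)]$. So the triangle is bounded on the left by the $2$-characteristic from $a$ and on the right by the $1$-characteristic from $b$. The region between $\psi(\cdot,a)$ and $\eta(\cdot,b)$ that you name is the larger domain of influence, and there Lemma \ref{Thm:SupersonicInvariance} gives no sign information.

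The containment you need follows from two facts. First, $\eta(s,y)\ge\eta(s,a)$ by order preservation of the flow. Second, $c_2-c_1=2h>0$ at any meeting point, so $\eta(\cdot,y)$ can cross $\psi(\cdot,b)$ only from left to right as time increases. Hence $\eta(t,y)\le\psi(t,b)$ forces $\eta(s,y)\le\psi(s,b)$ for all $0\le s\le t$.
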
 

\begin{proof}
    Using Lemma (\ref{Thm:SupersonicInvariance}), this is readily observed from the time evolution of $w$, 

    \begin{align*}
        \partial_t(w\circ\eta)=\left(-\frac{\gamma-1}{4r}(w^2-z^2)\right)\circ\eta < 0, 
    \end{align*} 
    which holds for all $(t,r)\in\Omega(T,[a,b])$. 
\end{proof} 

It thus becomes clear from the upper and lower bounds in Lemmas \ref{Thm:SupersonicInvariance}, \ref{Thm:wUpperBound} that the coefficients, $A_i,B_i$ are bounded uniformly in time. This is crucial for determining the classical nature of the solution. 

\begin{theorem}[Radial Rarefactive Existence; Theorem 5.3 of \cite{CEHS_1}]\label{Thm:Radial_Rarefactive_Existence}

    Let $1<\gamma< 3$, and let the initial data, 
    $(h_0(r),u_0(r))\in C^1([0,\infty))$ satisfy 

    \begin{align*}
        0<\frac{2}{\gamma-1}h_0(r)\leq u_0(r)\leq M_1 
    \end{align*} 
    over any $(b,\infty)$ for any $b>0$. Further suppose that at zero, the solution satisfies the following, 

    \begin{align*}
        &\rho_0(0)=u_0(0)=0, 
    \end{align*}
    and that the gradient variables, $\alpha_0,\beta_0$ satisfy 

    \begin{align*}
        &\min_{r\in[0,\infty)}\{\alpha_0(r), 
        \beta_0(r)\}\geq 0, 
        \\
        &\max_{r\in[0,\infty)}\{\alpha_0(r), 
        \beta_0(r)\}\leq M_2, 
    \end{align*} 
    where $\alpha_0(0),\beta_0(0)$ are determined in the sense of their respective limits as $r\rightarrow0$. Then, the Euler Equations under radial symmetry admit a global $C^1$ solution satisfying, 

    \begin{align*}
        &\frac{2}{\gamma-1}h(r,t)\leq 
        u(r,t) \leq 2M_1 \ \forall (t,r) 
        \in [0,\infty)\times[0,\infty),  
        \\
        &h(r,t)>0 \ \forall r>0, 
        \\
        &\min_{t\geq0,r\geq0}\{\alpha(t,r), 
        \beta(t,r)\}\geq 0, 
        \\
        &\max_{t\geq0,r\geq0}\{\alpha(t,r), 
        \beta(t,r)\}\leq M_2.  
    \end{align*} 
    
\end{theorem}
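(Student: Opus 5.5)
The plan is a continuation argument in the characteristic variables $(w,z,\alpha,\beta)$. All a priori bounds come from invariant-region reasoning along the flow maps $\psi,\eta$, using Lemmas \ref{Thm:SupersonicInvariance} and \ref{Thm:wUpperBound} and the sign properties $A_i,B_i\ge0$, $B_i-A_i\ge0$, valid wherever $u\ge\frac{2}{\gamma-1}h$. Local $C^1$ existence on $r>0$ is standard for the strictly hyperbolic diagonal system. It holds because the data are bounded and $h>0$ for $r>0$, so $c_1<c_2$. Since $z_0\ge0$ means $c_1\ge\frac{\gamma+1}{4}z\ge0$, both characteristic families point outward. Hence on any region $[b,\infty)$ the domain of dependence lies in $[b,\infty)$ up to the motion of the boundary. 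Near the origin I would treat the point $r=0$ through the limits of $\alpha,\beta$ and the condition $\rho_0(0)=u_0(0)=0$. I would show that $u=h=0$ persists along the characteristic emanating from the origin, which stays at $r=0$ because $c_1=c_2=0$ there. Then I would work on the triangles $\Omega(T,[a,b])$ with $a\downarrow0$, $b\uparrow\infty$.

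The a priori estimates proceed as follows. First, Lemma \ref{Thm:SupersonicInvariance} gives $u\ge\frac{2}{\gamma-1}h$, and Lemma \ref{Thm:wUpperBound} gives $\|w\|_\infty\le\|w_0\|_\infty$. Since $z\ge0$ and $h\le\frac{\gamma-1}{4}w$, this yields $u=\frac{w+z}{2}\le w\le\|w_0\|_\infty\le 2M_1$ for $\gamma<3$. Positivity of $h$ for $r>0$ follows from the $h$ equation, which is linear in $h$ along particle paths with bounded coefficient $-(u_r a+(\gamma-1)u/r)$ on $r\ge a$. Second, for $\alpha,\beta$ I run an invariant-rectangle argument on $[0,M_2]^2$. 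Suppose $\beta$ first touches $0$ at some point along a 1-characteristic while $\alpha\ge0$. Then $\partial_1\beta=A_1\alpha\ge0$, so $\beta$ cannot go negative; to make this strict I would perturb the lower bound to $-\delta e^{Ct}$ and let $\delta\to0$. Now suppose $\beta$ first touches $M_2$ while $0\le\alpha\le M_2$. Then
\begin{align*}
\partial_1\beta=-\tfrac{\gamma+1}{4}M_2^2-\tfrac{3-\gamma}{4}\alpha M_2+A_1\alpha-B_1M_2\le -(B_1-A_1)M_2\le0,
\end{align*}
using $\alpha\le M_2$, $A_1\ge0$, and $B_1\ge A_1$. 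The same argument applies to $\alpha$ along 2-characteristics. Because characteristics of different families meet, the usual first-touching-time argument on $\Omega(T,[a,b])$ (taking the earliest time either variable exits, which is well defined since $A_i,B_i$ are bounded for $r\ge a$) closes both bounds simultaneously.

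With $u,h,\alpha,\beta$ uniformly bounded, $z_r$ and $w_r$ are bounded on $r\ge a$. Indeed, $z_r=\beta+\frac{2uh}{rc_1}$, and $\frac{uh}{c_1}$ is bounded because $c_1\ge\frac{\gamma+1}{4}z$ and $u\ge \frac{2}{\gamma-1}h$ give $c_1\gtrsim u$ unless $h$ is comparable to $u$; in the borderline supersonic case $c_1=u-h\ge \frac{3-\gamma}{2}u\cdot\frac{1}{?}$ follows from $h\le\frac{\gamma-1}{2}u$, so $c_1\ge\frac{3-\gamma}{2}u$. This $C^1$ bound lets the local solution be continued on every $r\ge a$ for all time.

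The main obstacle is the origin. There $1/r$ blows up, and I need $C^1$ regularity of $\mathbf u$ and $h$ uniformly up to $r=0$, not only for $r\ge a$. The plan is to use that $u/r$ and $h/r$-type quantities are controlled by $\alpha,\beta$. Near $r=0$ with $u\sim u_r(0)r$, one has $\frac{2uh}{rc_1}\to$ finite limits, so the bounds on $\alpha,\beta$ give bounds on $u_r$ and $h_r$ uniform in $a$. I would then pass $a\to0$ by uniqueness and consistency of the solutions on nested triangles. I expect the delicate point to be justifying that the limits $\alpha(t,0),\beta(t,0)$ remain in $[0,M_2]$; I would try to obtain this from the ODE for $u_r(t,0)$ obtained by taking $r\to0$.
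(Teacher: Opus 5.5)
Away from the origin your plan is sound. It uses the same ingredients the paper reproduces from \cite{CEHS_1}; the paper itself cites this theorem rather than reproving it, but it gives Lemmas \ref{Thm:SupersonicInvariance}--\ref{Thm:wUpperBound}, the first-crossing argument on compact characteristic triangles in Lemma \ref{Thm:Refined_Upper_Bound}, and the ratio bounds in Corollary \ref{Cor:FirstDerivativeDecay}. Your derivations of the following are all fine:
$z\ge0$, $u\le w\le\|w_0\|_\infty\le 2M_1$, positivity of $h$ along particle paths, and invariance of $[0,M_2]^2$. For that last step you use the $-\delta e^{Ct}$ device on the lower edge; the upper edge is already strict because of the $-\frac{\gamma+1}{4}M_2^2$ term. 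Your line deriving $c_1\ge\frac{3-\gamma}{2}u$ is garbled, but the conclusion is correct.

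The genuine gap is at $r=0$, which you flag but do not resolve. Your $C^1$ bound on $r\ge a$ uses $h/r\le\|h\|_\infty/a$, so it has size $M_2+C/a$ and degenerates as $a\downarrow0$. Your proposed remedy is that near $r=0$, with $u\sim u_r(0)r$, the ratios have finite limits. This presupposes the $C^1$ regularity at the origin that you are trying to prove, and it gives no bound uniform in $a$ or $t$. Without such a bound you cannot justify any of the following:
the characteristic through the origin stays there (this needs $c_i\le Cr$);
the triangles $\Omega(T,[a,b])$ exhaust $\{r>0\}$ as $a\downarrow0$;
$A_i,B_i$ remain bounded.

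The missing idea is the one-sided estimate in the proof of Corollary \ref{Cor:FirstDerivativeDecay}. Since $\frac{2uh}{rc_2}\ge0$, one has $w_r\le\alpha\le M_2$. Integrate this from the origin, or inside $\Omega(T,[a,b])$ from the left edge $\eta(t,a)$, using that $w$ is nonincreasing along $\eta$ and that $w_0(a)\le M_2a$ (because $w_0(0)=0$). This gives $w\le M_2 r$ with a constant independent of $a$. Hence $u/r\le w/r\le M_2$ and $h/r\le w/(2r)\le M_2/2$. Combined with $u/c_2\le1$ and $u/c_1\le\frac{2}{3-\gamma}$, this yields $|w_r|\le M_2$ and $0\le z_r\le\frac{5-\gamma}{3-\gamma}M_2$, uniformly in $r$, $t$ and $a$. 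It also gives $c_i\lesssim M_2 r$ and uniform bounds on $A_i,B_i$, which is what lets the continuation argument close up to the origin.

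Conversely, the point you single out as delicate, that $\alpha(t,0),\beta(t,0)\in[0,M_2]$, is automatic by continuity once these uniform bounds give a $C^1$ solution up to $r=0$.
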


\begin{remark}
    From standard theory for quasilinear symmetric-hyperbolic systems, it is known that higher regularity of the initial data persists so long as the $C_b^1$ norm of the solution remains bounded. Thus, if initial data in $C_b^m$ satisfy the conditions of Theorem \ref{Thm:Radial_Rarefactive_Existence} together with additional compatibility conditions at the origin (detailed later in Assumption \ref{Ass:A1}), then the corresponding classical solution retains this regularity on every finite time interval.
\end{remark}
While useful for proving the existence of a global classical solution, the bounds on $\alpha(t,r),\beta(t,r)$ are not strict. Indeed, \cite{ACEL_1} demonstrated that, numerically, $\alpha(t,r),\beta(t,r)$ decay in time. In anticipation of later sections in which this will become relevant, we seek to refine our bounds. 

\begin{lemma}\label{Thm:Refined_Upper_Bound}
    Let $h_0(r),u_0(r),\alpha_0(r),\beta_0(r)$ satisfy the assumptions in Theorem 
    \ref{Thm:Radial_Rarefactive_Existence}. Then, the following bound holds for all $t\geq 0$, 

    \begin{align*}
        0\leq \alpha(t,r),\beta(t,r)\leq 
        \frac{M_2}{1+\frac{\gamma+1}{8}M_2t}.
    \end{align*}
\end{lemma}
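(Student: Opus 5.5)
The plan is a comparison (maximum-principle) argument along characteristics. I compare the pair $(\alpha,\beta)$ with the spatially constant solution of the Riccati ODE $Y'=-kY^2$, where $k:=\frac{\gamma+1}{8}$. The lower bound $\alpha,\beta\ge 0$ is already part of Theorem \ref{Thm:Radial_Rarefactive_Existence}, so only the upper bound needs proof. For $\delta>0$ set
\begin{align*}
Y_\delta(t):=\frac{M_2+\delta}{1+k(M_2+\delta)t},\qquad Y_\delta'(t)=-kY_\delta(t)^2 .
\end{align*}
It suffices to show $\alpha,\beta<Y_\delta$ for every $\delta>0$ and then let $\delta\to0$. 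At $t=0$ this holds because $\alpha_0,\beta_0\le M_2<M_2+\delta$.

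The key algebraic step concerns a point where one of the variables, say $\beta$, is the larger of the two, i.e.\ $\beta\ge\alpha\ge0$. There I use the sign information $A_1,B_1\ge0$ and $B_1-A_1\ge0$, which holds since $u\ge\frac{2}{\gamma-1}h$ by Theorem \ref{Thm:Radial_Rarefactive_Existence}. This gives
\begin{align*}
A_1\alpha-B_1\beta\le A_1\beta-B_1\beta=-(B_1-A_1)\beta\le0,\qquad -\tfrac{3-\gamma}{4}\alpha\beta\le0,
\end{align*}
and hence $\partial_1\beta\le-\frac{\gamma+1}{4}\beta^2$. The symmetric estimate $\partial_2\alpha\le-\frac{\gamma+1}{4}\alpha^2$ holds wherever $\alpha\ge\beta\ge0$. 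So at the larger of the two variables, the transport equation is dominated by a Riccati decay with rate $\frac{\gamma+1}{4}=2k$.

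Next I run a first-touching-time argument. Suppose the claim fails, and let $t^*$ be the infimum of times at which $\max\{\alpha,\beta\}\ge Y_\delta$ somewhere. Assume for now that this is attained at a point, with say $\beta(t^*,r^*)=Y_\delta(t^*)\ge\alpha(t^*,r^*)$. Follow the $1$-characteristic $\psi$ through $(t^*,r^*)$. The function $g(t):=\beta\circ\psi-Y_\delta$ is negative for $t<t^*$ and vanishes at $t^*$, so $g'(t^*)\ge0$. On the other hand, the previous step gives
\begin{align*}
g'(t^*)\le -2kY_\delta(t^*)^2+kY_\delta(t^*)^2=-kY_\delta(t^*)^2<0,
\end{align*}
which is a contradiction. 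The case where $\alpha$ is the larger variable is identical, using $\eta$.

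I expect the main obstacle to be making the touching argument rigorous on the noncompact domain $[0,\infty)$, where the supremum need not be attained and the origin is a singular point for the coefficients.

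For the noncompact domain, I would first work on characteristic triangles $\Omega(T,[a,b])$ with $0<a<b$, where everything is compact and $C^1$. Any point $(t,r)$ with $r>0$ lies in such a triangle, so the bound holds there with $\delta$ fixed. The bound at $r=0$ then follows by continuity of the limits defining $\alpha(t,0)$ and $\beta(t,0)$.

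There is a fallback if the extremum must be handled globally. I would instead use an approximate maximizing sequence for $m(t):=\sup_r\max\{\alpha,\beta\}$ and derive the upper Dini inequality $D^+m\le-2km^2+\epsilon$. This is precisely where the factor-two slack between the true rate $\frac{\gamma+1}{4}$ and the stated rate $\frac{\gamma+1}{8}$ absorbs the error terms. Integrating the differential inequality then yields $m(t)\le M_2/(1+kM_2t)$.
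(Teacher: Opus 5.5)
Your proposal is correct and follows essentially the same route as the paper. Both arguments use a first-crossing comparison, on compact characteristic triangles, along the characteristic of whichever gradient variable first reaches the Riccati barrier. The signs $A_i\geq0$, $B_i-A_i\geq0$, $\alpha,\beta\geq0$ give $\partial_i(\cdot)\leq-\frac{\gamma+1}{4}(\cdot)^2$ at that point, and the factor-two slack against the barrier rate $\frac{\gamma+1}{8}$ yields the contradiction.

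Your $\delta$-shifted barrier $Y_\delta$ and your explicit localization to triangles with base in $r>0$ tighten the paper's argument. The paper does not explicitly justify the crossing condition $\partial_2(\alpha-Q)\geq0$ when the data may equal $M_2=Q(0)$ at $t=0$. Your strict inequality at $t=0$ makes that derivative condition immediate.
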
 

\begin{proof}
    We first define the following, 

    \begin{align*}
        Q(t):=\frac{M_2}{1+\frac{\gamma+1}{8}M_2t}. 
    \end{align*}
    Suppose by way of contradiction that there exists some $(t_1,r_1)$ such that 
    
    \begin{align*}
        \max\{\alpha(t_1,r_1),\beta(t_1,r_1)\}
        >Q(t_1). 
    \end{align*} 
    Let $\Gamma(t_1,r_1)$ denote some closed characteristic triangle with $(t_1,r_1)$ as its top vertex. Since the initial data satisfy 
    \begin{align}
        \alpha_0(r),\beta_0(r)\leq M_2=Q(0), 
    \end{align} 
    whereas at least one of the gradient variables exceeds $Q$ at $(t_1,r_1)$, the compactness of the characteristic triangle, $\Gamma(t_1,r_1)$ implies the existence of a first upward crossing $(t^*,r^*)\in\Gamma(t_1,r_1)$. Without loss of generality, suppose that $\alpha$ crosses first.  

    \begin{align}
        &\alpha(t^*,r^*)=Q(t^*), \nonumber\\
        &\beta(t^*,r^*)\leq Q(t^*), \label{Eqn:Contradictive_Assumption}\\ 
        &\left.\partial_2(\alpha-Q)
        \right|_{(t^*,r^*)}\geq0. \nonumber
    \end{align} 
    We then evaluate the derivative of $\alpha$ along the $2$-characteristics, 

    \begin{align*}
        \partial_2\alpha(t^*,r^*) 
        &=\left.\left(-\frac{\gamma+1}{4} 
        \alpha^2-\frac{3-\gamma}{4}\alpha\beta
        +A_2\beta-B_2\alpha\right)\right|
        _{(t^*,r^*)} 
        \nonumber\\
        &\leq \left.\left(-\frac{\gamma+1}{4}Q^2 
        -\frac{3-\gamma}{4}Q\beta 
        -A_2(Q-\beta) 
        -(B_2-A_2)Q\right)\right|_{(t^*,r^*)}
        \nonumber\\
        &\leq -\frac{\gamma+1}{4}Q(t^*)^2. 
    \end{align*} 
    This allows us to compute the derivative of the difference between $\alpha$ and $Q(t)$, 

    \begin{align*}
        \left.\partial_2\left(\alpha-Q(t)\right) 
        \right|_{(t^*,r^*)} 
        &\leq -\frac{\gamma+1}{4}Q(t^*)^2 
        -(\left.\partial_tQ(t))\right|_{t^*} 
        \nonumber\\
        &=-\frac{\gamma+1}{4}Q(t^*)^2 
        +\frac{\gamma+1}{8}Q(t^*)^2 
        \nonumber\\
        &=-\frac{\gamma+1}{8}Q(t^*)^2 < 0. 
    \end{align*} 
    This contradicts (\ref{Eqn:Contradictive_Assumption}), thus completing the proof. 
\end{proof} 
Using Lemma \ref{Thm:Refined_Upper_Bound}, we obtain bounds on $w_r$, $z_r$. 

\begin{corollary}\label{Cor:FirstDerivativeDecay}
    Let $h_0(r),u_0(r),\alpha_0(r),\beta_0(r)$ satisfy the assumptions in Theorem 
    \ref{Thm:Radial_Rarefactive_Existence}. Then, the derivatives of the Riemann variables, $w_r$, $z_r$, decay according to  

    \begin{align*}
        \max\{\lVert w_r(t,\cdot)\rVert_{L^\infty}
        ,\lVert z_r(t,\cdot)\rVert_{L^\infty}\} 
        =O(1+t)^{-1}.
    \end{align*}
\end{corollary}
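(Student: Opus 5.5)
Since $\alpha = w_r + \frac{2uh}{rc_2}$ and $\beta = z_r - \frac{2uh}{rc_1}$, we have
\begin{align*}
    w_r = \alpha - \frac{2uh}{rc_2}, \qquad z_r = \beta + \frac{2uh}{rc_1}.
\end{align*}
Lemma \ref{Thm:Refined_Upper_Bound} already gives $0\le\alpha,\beta\lesssim(1+t)^{-1}$. It therefore suffices to show that the correction terms $\frac{uh}{rc_1}$ and $\frac{uh}{rc_2}$ are $O(1+t)^{-1}$ uniformly in $r$. By Theorem \ref{Thm:Radial_Rarefactive_Existence} we have $0\le h\le \frac{\gamma-1}{2}u$, so
\begin{align*}
    c_1=u-h\ge \tfrac{3-\gamma}{2}u, \qquad c_2\ge u.
\end{align*}
Hence
\begin{align*}
    \frac{uh}{rc_i}\lesssim \frac{h}{r},
\end{align*}
and the problem reduces to a decay estimate for $h/r$, or equivalently for $u/r$, since $h\lesssim u$.

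To control $u/r$, I would use the fact that $u=\frac{w+z}{2}$ with $u(t,0)=0$. Writing $u(t,r)=\int_0^r u_r\,ds$ gives $u/r\le \sup u_r$, but this is circular, since $u_r$ involves $w_r,z_r$. I would instead exploit the sign information:
\begin{align*}
    w_r \ge -\frac{2uh}{rc_2}, \qquad z_r\ge \frac{2uh}{rc_1}\ge 0,
\end{align*}
so that $z$ is nondecreasing in $r$, while $w_r$ is bounded from below by a term of size $h/r$. For upper bounds we have
\begin{align*}
    z_r\le \beta+\frac{2uh}{rc_1}, \qquad w_r\le\alpha.
\end{align*}
The cleanest route is a characteristic argument: along a 2-characteristic, $\partial_2 r = c_2\ge u\ge0$. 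I would then derive a Riccati-type inequality for $q:=u/r$ along $\partial_t + u\partial_r$, roughly
\begin{align*}
    \frac{d}{dt}q \le -q^2 + (\text{terms controlled by }\alpha,\beta,q^2).
\end{align*}
This parallels the exact relation $\dot q=-q^2$ for the free expansion $u=r/t$. An alternative route, probably simpler, is a maximum-principle/comparison argument in the style of Lemma \ref{Thm:Refined_Upper_Bound}. It applies to $\frac{h}{r}$ directly via the $h$-equation
\begin{align*}
    \partial_t h + u h_r = -a h u_r - 2a\frac{uh}{r},
\end{align*}
and yields
\begin{align*}
    \frac{h}{r}\lesssim \frac{C}{1+t}.
\end{align*}

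The main obstacle is exactly this step: bounding the source-induced term $\frac{uh}{rc_i}$ uniformly, including near the origin (where one must use the compatibility/vacuum behavior $h_0(0)=u_0(0)=0$, so $h/r$, $u/r$ stay bounded initially) and at large $r$, where the time decay must come from $r\gtrsim t\cdot$(speed). My first attempt would be to bound $\frac{2uh}{rc_2}\le \frac{2h}{r}$ and to split on a characteristic: the trajectory $r(t)$ satisfies
\begin{align*}
    r(t)\ge r(0)+\int_0^t u.
\end{align*}
In addition, $h$ is transported with damping $-2a\frac{uh}{r}-ahu_r$, which gives $h\circ\text{char}$ nonincreasing once $u_r\ge0$ is known. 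Here $u_r=\frac{w_r+z_r}{2}$, and I would check its sign from
\begin{align*}
    \alpha+\beta\ge0 \quad\text{and}\quad \frac{2uh}{rc_1}\ge\frac{2uh}{rc_2},
\end{align*}
which gives
\begin{align*}
    w_r+z_r=\alpha+\beta+2uh\Big(\frac{1}{rc_1}-\frac{1}{rc_2}\Big)\ge0.
\end{align*}
So $u$ is nondecreasing in $r$, and together with $u(t,0)=0$ this yields the comparison $u/r\lesssim (1+t)^{-1}$ from the Riccati structure. Combining these pieces gives $\|w_r\|_{L^\infty},\|z_r\|_{L^\infty}=O(1+t)^{-1}$.
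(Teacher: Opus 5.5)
Your reduction matches the paper's. Writing $w_r=\alpha-\frac{2uh}{rc_2}$ and $z_r=\beta+\frac{2uh}{rc_1}$, and using $u/c_2\le 1$ and $u/c_1\le\frac{2}{3-\gamma}$, everything comes down to a uniform bound $h/r\lesssim(1+t)^{-1}$. However, this bound is never proved. You list a Riccati inequality for $u/r$ and a comparison argument for $h/r$, but derive neither, and you acknowledge this is the main obstacle. The one concrete deduction you give does not work. The facts $u_r\ge0$ and $u(t,0)=0$ only yield $u\ge 0$; they give no upper bound on $u/r$. Similarly, $h$ nonincreasing and $r$ nondecreasing along particle paths only show that $h/r$ does not increase along them. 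That gives boundedness, not decay.

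The missing step is short and uses an inequality you already wrote down. You rejected integrating from the origin as circular because you tried it with $u_r$. But $w_r$ has a one-sided bound by $\alpha$ alone: $w_r\le\alpha\le Q(t):=M_2/(1+\frac{\gamma+1}{8}M_2t)$. Since $w(t,0)=0$ (point vacuum at the origin), integrating in $r$ gives $w(t,r)\le rQ(t)$. Then $z\ge 0$ gives $h=\frac{\gamma-1}{4}(w-z)\le\frac{\gamma-1}{4}w$, so $h/r\le\frac12 Q(t)$. Consequently $-Q\le w_r\le Q$ and $0\le z_r\le\bigl(1+\frac{2}{3-\gamma}\bigr)Q$; this is the paper's proof.

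Your $h$-equation idea can also be closed, but it needs an ingredient you did not identify. Along $\dot X=u(t,X)$, the quantity $q=h/X$ satisfies $\dot q=-q\bigl(au_r+(2a+1)u/X\bigr)$. Your $u_r\ge0$ and the supersonic condition $u\ge h/a$ then give $\dot q\le-\frac{2a+1}{a}q^2$. This yields $q(t)\le L/(1+\frac{2a+1}{a}Lt)$ with $L=\sup|\partial_r h_0|$. Using $u\ge h/a$ to produce the quadratic damping is exactly the piece your sketch lacks.
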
 

\begin{proof}
    We first note that 
    \begin{align}
        &w_r=\alpha-\frac{2uh}{rc_2},  
        \label{Eqn:w_rCorr}
        \\
        &z_r=\beta+\frac{2uh}{rc_1}. 
        \label{Eqn:z_rCorr}
    \end{align} 
    from the invariance of the supersonic expanding condition under the classical flow, we note that 
    \begin{align}
        0\leq \frac{2}{\gamma-1}h\leq u.  
    \end{align} 
    Therefore, the latter term in (\ref{Eqn:w_rCorr}) is negative, allowing us to bound $w(t,r)$ above. Indeed, 
    \begin{align}
        &w(t,r)= \int_0^r w_r(t,x)dx 
        \leq rQ(t), 
        \\
        &\frac{w(t,r)}{r}\leq Q(t),   
        \label{Eqn:wUpperCorr}
    \end{align} 
    where $Q(t)$ is defined as in the proof of Lemma
    \ref{Thm:Refined_Upper_Bound}. Using this, we obtain 
    \begin{align}
        \frac{h}{r}\leq \frac{\gamma-1}{4r} 
        (w-z)\leq \frac{w}{2r}\leq\frac{1}{2}Q(t). 
        \label{Eqn:hr}
    \end{align} 
    It remains to bound the ratios $u/c_1$, $u/c_2$. The latter is trivial owing to the supersonic flow, 
    \begin{align}
        \frac{u}{c_2}=\frac{u}{u+h} 
        \leq  \frac{u}{u}=1, 
        \label{Eqn:uc2}
    \end{align} 
    whereas the former is bounded owing to the fact that $\gamma<3$. Indeed, 
    \begin{align}
        &c_1=u-h\geq \frac{3-\gamma}{2}u, 
        \\
        &\frac{u}{c_1}\leq\frac{2}{3-\gamma}. 
        \label{Eqn:uc1}
    \end{align} 
    Thus, combining (\ref{Eqn:wUpperCorr})--(\ref{Eqn:uc1}) and Lemma \ref{Thm:Refined_Upper_Bound} yields the desired decay 
    \begin{align*}
        \max\{\lVert w_r(t,\cdot)\rVert_{L^\infty}
        ,\lVert z_r(t,\cdot)\rVert_{L^\infty}\} 
        =O(1+t)^{-1}.
    \end{align*} 
\end{proof}

\subsection{Statement of Results} 

Before proceeding, we state assumptions over which our results will take place. Let $N,k$ be positive integers satisfying 
\begin{align*}
    N&>k+\frac{3}{2},  
    \\
    N&\geq 4. 
\end{align*}
We define the rarefactive background data as $(\bar h,\bar\bfu)$ with zero entropy and the perturbation as $(\sigma,\bfv,\theta)$, where 

\begin{align*}
    &H:=\bar h+\sigma , 
    \\
    &\bfU:=\bar\bfu+\bfv, 
    \\
    &S:=\bar S+\theta=\theta. 
\end{align*}

\begin{assumption}\label{Ass:A1}
    We assume the initial data of the non-perturbed solution are radially symmetric and vanish at the origin, 

    \begin{align*}
        &\bar h_0(x)\in C_b^{N+2}
        (\mathbb{R}^3), 
        \\ 
        &\bar\bfu_0(x)\in C_b^{N+2}
        (\mathbb{R}^3;\mathbb{R}^3), 
        \\
        &\bar\bfu_0(x)=\bar\bfu_0(|x|) 
        := \bar u_0(r)e_r, 
        \\
        &\bar h_0(x)=\bar h_0(|x|):=\bar h_0(r). 
    \end{align*} 
    We further assume that the following bounds hold, 

    \begin{align*}
        &0<\frac{2}{\gamma-1}\bar h_0(r)
        \leq \bar u_0(r)\leq M_1 \qquad \forall r>0, 
        \\
        &\min_{r\in[0,\infty)}\{\alpha_0(r), 
        \beta_0(r)\}\geq 0, 
        \\
        &\max_{r\in[0,\infty)}\{\alpha_0(r), 
        \beta_0(r)\}\leq M_2, 
        \\
        &\lVert\bar\bfu_0\rVert_{C_b^{N+2}}
        +\frac{2}{\gamma-1}
        \lVert\bar h_0\rVert_{C_b^{N+2}}
        \leq M_3, 
        \\
        &\left.\frac{d^j}{dr^j}\bar u_0(r)
        \right|_{r=0}=\left.\frac{d^j}{dr^j} 
        \bar h_0(r) \right|_{r=0} = 0 
        \qquad \forall \ 0\leq j\leq N+1, 
    \end{align*} 
    and define $C_0> M_1+M_2+M_3$. By Corollary \ref{Cor:FirstDerivativeDecay}, we take $C_0$ large enough so that the following bounds hold, 

    \begin{align*}
        \left\lVert w_r(t,\cdot) 
        \right\rVert_{L^\infty}+ 
        \left\lVert z_r(t,\cdot) 
        \right\rVert_{L^\infty} 
        \leq \frac{C_0}{1+t}. 
    \end{align*}
    
\end{assumption} 

\begin{assumption}\label{Ass:A2}
    The initial data for the perturbations, $(\sigma,\bfv,\theta)$, satisfy the following, 

    \begin{align*}
        &\sigma_0(x) \in H^N(\mathbb{R}^3),
        \\
        &\bfv_0(x)\in H^N(\mathbb{R}^3; 
        \mathbb{R}^3), 
        \\
        &\theta_0(x) \in H^N(\mathbb{R}^3),
        \\
        &\lVert\sigma_0\rVert_{H^N} 
        +\lVert\bfv_0\rVert_{H^N} 
        +\lVert \theta_0\rVert_{H^N}\leq \varepsilon, 
    \end{align*}  
    where the sonic speed of initial data to the perturbed system is strictly positive everywhere except possibly at the origin,  

    \begin{align*}
        &H_0(0)\geq 0, 
        \\
        &H_0(x)> 0 \qquad \forall x\neq0. 
    \end{align*}
    
\end{assumption}

We then state our main results. 

\begin{theorem}[Main Theorem]\label{Thm:PerturbedSoln}
    Let $\bar h_0, \bar\bfu_0$ satisfy Assumption 
    \ref{Ass:A1}, and let the initial perturbations, $(\sigma_0,\bfv_0, \theta_0)$, satisfy Assumption \ref{Ass:A2}. Furthermore, let 

    \begin{align*}
        &N>k+\frac{3}{2},  
        \\
        &N\geq 4, 
    \end{align*} 
    and define $T^\varepsilon_{\mathrm{life}} $ as the maximal classical lifespan of the perturbed solution. Then, for any $\delta>0$, there exists some $\varepsilon_0$ depending on $\delta$ such that $\varepsilon< \varepsilon_0$ implies that the perturbed solution both remains classical and 

    \begin{align*}
        \lVert \bfU-\bar\bfu \rVert_{W^{k,\infty}} 
        +\lVert H-\bar h\rVert_{W^{k,\infty}} 
        +\lVert \theta \rVert_{W^{k,\infty}}
        \leq\delta, 
    \end{align*}
    for all $0\leq t\leq T^\varepsilon$, where 

    \begin{align*}
        T^\varepsilon_{\mathrm{life}} \geq T^\varepsilon \gtrsim \left(\log\left(\frac{1}
        {\varepsilon}\right)\right)^{\mathcal{C}_N}, 
    \end{align*} 
    for some constant, $\mathcal{C}_N>0$. 
    
\end{theorem}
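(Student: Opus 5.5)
The proof follows the outline given in the introduction: background derivative bounds, a symmetric-hyperbolic energy estimate with a Gr\"onwall argument, and continuation.

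\textbf{Step 1: polynomial growth of the background derivatives.} I would first show that
\begin{align*}
\|\nabla^m\bar h(t)\|_{L^\infty}+\|\nabla^m\bar\bfu(t)\|_{L^\infty}\lesssim(1+t)^{B_m}, \qquad 1\leq m\leq N+2.
\end{align*}
For $m=1$, Corollary \ref{Cor:FirstDerivativeDecay} together with the bounds $h/r,\ u/r\lesssim Q(t)$ from its proof gives decay like $(1+t)^{-1}$. This controls $\nabla\bar\bfu=\bar u_r\,e_r\otimes e_r+(\bar u/r)(I-e_r\otimes e_r)$.

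For $m\geq2$ I would differentiate the characteristic equations $m-1$ times in $r$. The $m$-th derivatives of $(w,z)$ then satisfy linear transport equations along $\partial_1,\partial_2$, of the form
\begin{align*}
\partial_i D^m = -m\,(\partial_r c_i)\,D^m + \text{lower order}.
\end{align*}
Since $|\partial_r c_i|\leq C_0/(1+t)$, Gr\"onwall gives the factor $(1+t)^{mC_0}$. Induction, with the lower-order terms already polynomially bounded, then closes the estimate.

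The source terms $(w^2-z^2)/r$ and their $r$-derivatives are singular at the origin. I would handle them with the vanishing conditions at $r=0$ in Assumption \ref{Ass:A1}, which make $\bar u,\bar h$ smooth odd/even functions near $0$. The needed quotients can then be written as integrals of the form $\int_0^1\partial_r^{j}(\cdot)(sr)\,ds$. Converting from radial derivatives to Cartesian ones costs only further polynomial factors.

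\textbf{Step 2: symmetric-hyperbolic energy estimate.} Next I would subtract the background equations from \eqref{Eqn:Euler_huS} written for $(H,\bfU,S)$. Multiplying the $\sigma$-equation by $1/a$ gives a symmetric system
\begin{align*}
\partial_t V+\sum_j A_j(\bar h+\sigma,\bar\bfu+\bfv)\,\partial_jV=\mathcal{L}(\nabla\bar h,\nabla\bar\bfu)V+\mathcal{N},
\end{align*}
where $V=(\sigma,\bfv,\theta)$. I would use the constant symmetrizer $\mathrm{diag}(1/a,I,\ast)$, with the entropy term handled by noting that $\theta$ is simply transported. This symmetrizer stays coercive even at the vacuum point, and Kato's theorem \cite{Kato1975} then gives local existence.

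I would define $\mathcal{E}_N=\sum_{|\beta|\leq N}\|\partial^\beta V\|_{L^2}^2$ and apply $\partial^\beta$ to the system. The resulting terms are controlled as follows:
\begin{itemize}
\item $\|\nabla A\|_{L^\infty}\lesssim 1+\|\nabla V\|_{L^\infty}$;
\item commutators are handled by Kato--Ponce/Moser estimates, with background factors bounded by $(1+t)^{B_{N+1}}$ from Step 1;
\item $\|\nabla V\|_{L^\infty}\lesssim\mathcal{E}_N^{1/2}$ by Sobolev embedding, using $N\geq4$.
\end{itemize}
This yields the inequality stated in the introduction,
\begin{align*}
\mathcal{E}_N'\lesssim(1+t)^{2B^*}\mathcal{E}_N\bigl(1+\mathcal{E}_N^{1/2}\bigr)+\mathcal{E}_N^2,
\end{align*}
valid under the bootstrap assumption $\mathcal{E}_N\leq1$.

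\textbf{Step 3: Gr\"onwall and continuation.} As long as $\mathcal{E}_N\leq1$, the inequality reduces to $\mathcal{E}_N'\leq C(1+t)^{2B^*}\mathcal{E}_N$, so
\begin{align*}
\mathcal{E}_N(t)\leq\varepsilon^2\exp\bigl(C(1+t)^{2B^*+1}\bigr).
\end{align*}
This stays below $c\,\delta^2$ as long as $(1+t)^{2B^*+1}\lesssim\log(1/\varepsilon)$, i.e.\ for times up to $T^\varepsilon\sim(\log\frac1\varepsilon)^{\mathcal C_N}$ with $\mathcal C_N=1/(2B^*+1)$. Sobolev embedding with $N>k+3/2$ then converts the energy bound into the stated $W^{k,\infty}$ smallness.

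Positivity of $H$ away from the origin does not have to be tracked separately. The energy is written in terms of $\sigma$, and the symmetrizer is independent of $H$, so classicality follows from the $C^1$ bound alone. The continuation criterion then gives $T_{\mathrm{life}}^\varepsilon\geq T^\varepsilon$.

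\textbf{Main obstacle.} I expect Step 1 to be the hardest part: proving polynomial, rather than exponential, growth of the higher background derivatives uniformly up to the origin. The $1/r$ source terms and the point vacuum make the Cartesian regularity of the background delicate. If the interior argument does not extend cleanly to $r=0$, I would first try a separate treatment of a neighborhood of the origin, where the $(1+t)^{-1}$ decay of $u/r$ keeps the transport coefficients controlled.
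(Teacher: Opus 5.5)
Your Steps 1 and 3 essentially match the paper: characteristic identities for $\partial_r^jw,\partial_r^jz$ with linear coefficients of size $O((1+t)^{-1})$, Gr\"onwall and induction, then a bootstrap giving $\mathcal C_N=1/(2B^*+1)$. Step 2, however, has a genuine gap: your symmetrization does not handle the entropy perturbation. In the $(h,\bfu,S)$ variables the velocity equation contains the principal term $-\frac{H^2}{2a\gamma}\nabla\theta$, while the principal part of the $\theta$-equation is only $\bfU\cdot\nabla\theta$. For any diagonal symmetrizer $\mathrm{diag}(d_1,d_2I_3,d_3)$, the $(\bfv,\theta)$ and $(\theta,\bfv)$ blocks of the symmetrized $A_j$ are $-d_2\frac{H^2}{2a\gamma}e_j$ and $0$. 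So the system is not symmetric unless $H\equiv0$. (Separately, the $(\sigma,\bfv)$ block needs the weight $1/a^2$ on the $\sigma$-equation, not $1/a$.)

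Observing that $\theta$ is merely transported does not fix this. If you treat $\frac{H^2}{2a\gamma}\nabla\theta$ as a source, then at $|\beta|=N$ you must bound $\int H^2\,\partial^\beta\bfv\cdot\nabla\partial^\beta\theta\,dx$. That needs $N+1$ derivatives of $\theta$, or of $\bfv$ after integrating by parts. Propagating $\theta$ in $H^{N+1}$ by transport would in turn require $\bfv\in H^{N+1}$. The estimate loses a derivative and does not close.

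The missing ingredient is the paper's non-diagonal, $H$-dependent symmetrizer $A_0(H)$. It has off-diagonal entries $-\frac{H}{2\gamma}$ coupling $\sigma$ and $\theta$, and a large weight $\lambda$ on $\theta$. Equivalently, the energy density $e_\alpha$ carries the cross term $-\frac1\gamma H\,\partial^\alpha\theta\,\partial^\alpha\sigma$. Multiplying the $\sigma$-equation by $\partial^\alpha\sigma-\frac{H}{2\gamma}\partial^\alpha\theta$ produces $-\frac{aH^2}{2\gamma}\partial^\alpha\theta\,\nabla\cdot\partial^\alpha\bfv$. This combines with $-\frac{a}{2\gamma}H^2\partial^\alpha\bfv\cdot\nabla\partial^\alpha\theta$ into a divergence plus a term carrying only $\nabla H$. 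The $H_t$ generated by the cross term is removed using the $H$-equation.

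The price is that coercivity, $\lambda>H^2/(4\gamma^2)$, requires an $L^\infty$ bound on $H$. Your assertion that the symmetrizer is independent of $H$, and that nothing about $H$ needs tracking, is therefore false. You need the additional bootstrap $\lVert H\rVert_{L^\infty}\le5C_0$ with $\lambda=10C_0^2$, closed via the background bound on $\bar h$ and $\lVert\sigma\rVert_{L^\infty}\lesssim\mathcal E_N^{1/2}$. (Alternatively, replacing $h$ by $p^{(\gamma-1)/(2\gamma)}$ removes the $\nabla S$ term from the momentum equation, after which a diagonal symmetrizer depending only on $S$ does work; but that is a change of variables you did not make.)

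A smaller point: in Step 1 you use the flatness conditions at $r=0$ for all $t$, whereas Assumption \ref{Ass:A1} imposes them only at $t=0$. Their propagation in time needs its own argument, as in the paper's lemma on the stationary point vacuum.
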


\subsection{Derivative Bounds} 

Corollary \ref{Cor:FirstDerivativeDecay} gives a refined upper bound on the first derivatives. We now turn to higher-order derivatives. Throughout this section, empty sums are understood to vanish. 

\subsubsection{Propagation of the Stationary Point-Vacuum} 

We first prove that the vanishing conditions in Assumption \ref{Ass:A1} are propagated in time. We state this result in a slightly more general form independent of symmetry assumptions. 

\begin{lemma}
    
    Let $(H_0, \bfU_0, S_0)$ satisfy  
    
    \begin{align*}
        &\partial_x^\alpha H_0(0) 
        =\partial_x^\alpha \bfU_0(0)
        =0 \ \forall |\alpha|\leq N+1, 
        \\
        &H_0(x) >0 \ \forall x\neq 0. 
    \end{align*}
    Assume that the classical solution with initial data $(H_0,\bfU_0,S_0)$ exists over $[0,T]$ with   

    \begin{align*}
    H,\mathbf U,S
    \in C([0,T];C_b^{N+2}(\mathbb R^3))
    \cap C^1([0,T];C_b^{N+1}(\mathbb R^3)).
    \end{align*}
    Then, for all $0\leq t\leq T$, 

    \begin{align*}
        &\partial_x^\alpha H(t,0) 
        =\partial_x^\alpha \bfU(t,0) 
        =0 \qquad \forall |\alpha|\leq N+1, 
        \\
        &H(t,x) >0 \qquad \forall x \neq 0. 
    \end{align*}
\end{lemma}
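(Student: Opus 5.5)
The argument has two parts: a stationarity statement at the origin, handled by tracking the jets of $(H,\bfU)$ at $x=0$, and a positivity statement away from the origin, handled by transporting $H$ along particle paths.

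\emph{Vanishing of the jets.} Set
\begin{align*}
J(t):=\sum_{|\alpha|\leq N+1}\left(|\partial_x^\alpha H(t,0)|+|\partial_x^\alpha\bfU(t,0)|\right).
\end{align*}
Apply $\partial_x^\alpha$ to the first two equations of (\ref{Eqn:Euler_huS}) and evaluate at $x=0$; the regularity class gives enough derivatives for this. The Leibniz rule expresses $\partial_t\partial_x^\alpha H(t,0)$ and $\partial_t\partial_x^\alpha\bfU(t,0)$ as sums of products. Each product contains a factor $\partial^\beta H$ or $\partial^\beta\bfU$ at $0$ with $|\beta|\leq|\alpha|\leq N+1$. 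The other factors are derivatives of order up to $N+2$ of $H,\bfU,S$, and these are bounded on $[0,T]$.

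The only delicate terms are those in which the top-order derivative $N+2$ falls on a single factor. Examples are $\bfU\cdot\nabla\partial^\alpha H$, $aH\nabla\cdot\partial^\alpha\bfU$, $H\nabla\partial^\alpha H$, and $H^2\nabla\partial^\alpha S$. Each of these is multiplied by an undifferentiated $H$ or $\bfU$ evaluated at $0$. Those values are themselves entries of $J$. This holds for every term, because every nonlinearity in (\ref{Eqn:Euler_huS}) is at least linear in $(H,\bfU)$. The entropy term is quadratic in $H$, and the entropy derivatives are merely bounded.

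Hence $|J'(t)|\leq C J(t)$ for some constant $C$. Since $J(0)=0$, Gr\"onwall's inequality gives $J\equiv0$ on $[0,T]$. The derivative $J'$ is understood in the integral sense, using the $C^1$-in-time regularity of derivatives up to order $N+1$. I expect the bookkeeping of the top-order terms to be the main point to check, specifically that no term appears without a factor that is an entry of $J$.

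\emph{Positivity away from the origin.} Let $X(t,y)$ be the particle path, $\partial_tX=\bfU(t,X)$, with $X(0,y)=y$. This flow is well defined and a $C^1$ diffeomorphism because $\bfU$ is $C^1_b$. Since $\bfU(t,0)=0$, the origin is a fixed point, so by uniqueness $X(t,y)\neq0$ for $y\neq0$ and $X(t,\cdot)$ maps $\mathbb R^3\setminus\{0\}$ onto itself. Along a path, the first equation of (\ref{Eqn:Euler_huS}) gives
\begin{align*}
\frac{d}{dt}H(t,X)=-aH\,\nabla\cdot\bfU\,\big|_{(t,X)},
\end{align*}
so that
\begin{align*}
H(t,X(t,y))=H_0(y)\exp\left(-a\int_0^t\nabla\cdot\bfU(s,X(s,y))\,ds\right).
\end{align*}
The integral is finite because $\nabla\cdot\bfU$ is bounded. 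Therefore $H(t,x)>0$ whenever $x=X(t,y)$ with $y\neq0$, which is exactly when $x\neq0$.
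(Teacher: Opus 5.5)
Your proposal is correct and follows the same route as the paper. The paper likewise evolves the $(N+1)$-jet of $(H,\bfU)$ at the origin: via the ideal $\mathfrak{I}_{N+1}$, every nonlinearity carries a factor from that jet, so the jet solves an ODE with $F(t,0)=0$. It then gets positivity by transporting $H$ along the particle flow, which fixes the origin. Your estimate $J(t)\le C\int_0^t J(s)\,ds$ is an explicit version of the paper's ODE-uniqueness step, and it makes clear that the order-$(N+2)$ derivatives and the entropy derivatives enter only as bounded coefficients.
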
 

\begin{proof} 
    We first define the ideal, 
    
    \begin{align*}
        \mathfrak{I}_{N+1}:= 
        \{f\in C^{N+1}:\partial_x^\alpha f(0) 
        =0 \ \forall |\alpha|\leq N+1 \}. 
    \end{align*} 
    Given that 

    \begin{align*}
        &H \in \mathfrak{I}_{N+1}(\mathbb{R}^3), 
        \\
        &\bfU \in \mathfrak{I}_{N+1}(\mathbb{R}^3; 
        \mathbb{R}^3),   
    \end{align*} 
    then we observe the following, 

    \begin{align}
        \bfU\cdot\nabla H&\in\mathfrak{I}_{N+1} 
        (\mathbb{R}^3), \label{Eqn:UGradH}
        \\
        H\nabla\cdot\bfU&\in\mathfrak{I}_{N+1} 
        (\mathbb{R}^3), \label{Eqn:HGradU}
        \\
        \bfU\cdot\nabla\bfU&\in\mathfrak{I}_{N+1} 
        (\mathbb{R}^3;\mathbb{R}^3), \label{Eqn:UGradU}
        \\
        H\nabla H&\in\mathfrak{I}_{N+1} 
        (\mathbb{R}^3;\mathbb{R}^3), \label{Eqn:HGradH} 
        \\
        H^2\nabla S&\in\mathfrak{I}_{N+1} 
        (\mathbb{R}^3;\mathbb{R}^3), \label{Eqn:H2GradS}
    \end{align} 
    thus implying that 

    \begin{align}
        &\partial_tH\in\mathfrak{I}_{N+1} 
        (\mathbb{R}^3), \label{Eqn:H_ideal}
        \\
        &\partial_t\bfU\in\mathfrak{I}_{N+1} 
        (\mathbb{R}^3;\mathbb{R}^3).  
        \label{Eqn:U_ideal}
    \end{align} 
    We then define a vector of the form, 

    \begin{align*}
        &X(t):= \left[ \partial_x^\alpha 
        H(t,0), \partial_x^\alpha\bfU(t,0)\right], 
    \end{align*} 
    where $\alpha$ ranges over $|\alpha|\leq N+1$. Differentiating with respect to time gives us an ODE, 

    \begin{align*}
        \frac{d}{dt}X(t)=F(t,X(t)), 
    \end{align*} 
    By Equations (\ref{Eqn:H_ideal})--(\ref{Eqn:U_ideal}), this ODE satisfies
    $F(t,0)=0$. Since $X(0)=0$, uniqueness for this finite-dimensional ODE
    implies that $X(t)\equiv0$ for all $0\leq t\leq T$.

    It then remains to prove that $H(t,x)>0$ for all $x\neq 0$. Indeed, we consider the flow, 

    \begin{align*}
        &\partial_t \Phi =\bfU\circ\Phi, 
        \\
        &\Phi(0,y) = y. 
    \end{align*} 
    Since $\bfU(t,0)=0$, the flow satisfies $\Phi(t,0)=0$. This gives us the time evolution of our sonic speed, 

    \begin{align*}
        &\partial_t(H\circ\Phi) 
        = \left(-\frac{\gamma-1}{2}H 
        \nabla \cdot \bfU\right)\circ\Phi, 
        \\
        &H\circ\Phi = H_0(y) 
        \exp\left(-\frac{\gamma-1}{2} 
        \int_0^t \nabla\cdot \bfU \circ\Phi ds\right) 
        > 0 \qquad \forall y\neq 0, 
    \end{align*} 
    thus implying that the sonic speed, $H$, is positive for all $y\neq 0$. Furthermore, because the solution is classical, the flow map is invertible. Since
    $\Phi(t,0)=0$, every $x\neq0$ has a preimage $y\neq0$. Therefore, $H(t,x)>0$ for all $x\neq0$.
\end{proof} 

\subsubsection{Higher-Order Characteristic Identities}

We seek to bound the time evolution of the radial derivatives of $w,z$. Since $\partial_1,\partial_2$ do not commute with $\partial_r$, we begin with the commutator relation. Differentiating (\ref{Eqn:partial_1})--(\ref{Eqn:partial_2}) gives, for $i=1,2$,  

\begin{align*}
    \partial_r^j(\partial_t+c_i\partial_r) 
    &=\partial_t\partial_r^j 
    +\sum_{k=0}^j \binom{j}{k}(\partial^{j-k}_rc_i) 
    \partial_r^{k+1} 
    \nonumber\\
    &=\partial_t\partial_r^j+c_i\partial_r^{j+1} 
    +\sum_{k=0}^{j-1}\binom{j}{k}(\partial_r^{j-k} 
    c_i)\partial_r^{k+1} .
\end{align*} 
Rearranging allows us to obtain the desired evolution of our higher order derivatives, 

\begin{align}
    &\partial_t\partial_r^j+c_i\partial_r^{j+1} 
    =\partial_r^j(\partial_t+c_i\partial_r) 
    -\sum_{k=0}^{j-1}\binom{j}{k}
    (\partial_r^{j-k}c_i)\partial_r^{k+1}, 
    \nonumber\\
    &\partial_i\partial_r^j=\partial_r^j\partial_i 
    -\sum_{k=0}^{j-1}\binom{j}{k}(\partial_r^{j-k} 
    c_i)\partial_r^{k+1} . 
    \label{Eqn:Commutator_Relation}
\end{align} 
This commutator identity is the starting point for the higher-order derivative estimates of our Riemann variables. For $j\geq2$, the calculations in Appendix \ref{App:CharacteristicIdentities} give  

\begin{align*}
    \partial_1(\partial_r^jz)=-I^z_j\partial_r^jz 
     +\Lambda^z_j+Q^z_j+S^z_j , 
\end{align*} 
where 

\begin{align}
    &S_j^z:=-\sum_{k=1}^{j-2}
    \binom{j}{k}\left(\frac{\gamma-1}
    {4}\left(r^{-1}\partial_r^kz\partial_r^{j-k}z 
    +\partial_r^k(z^2)\partial_r^{j-k}(r^{-1})\right) 
    +\partial_r^{k+1}z\partial_r^{j-k}c_1\right), 
    \label{Eqn:S_j^z}\\
    &Q_j^z:=-\frac{\gamma-1}{4}
    \left(jr^{-1}z_r\partial_r^{j-1}z
    +z^2\partial_r^j(r^{-1}) 
    -jr^{-2}\partial_r^{j-1}(z^2) 
    \right)\nonumber\\
    &+\frac{\gamma-1}{4}\left(r^{-1}\sum_{k=1}^{j-1}\binom{j}{k}\partial_r^{j-k}w 
    \partial_r^kw+\sum_{k=0}^{j-1}\binom{j}{k} 
    \partial_r^{j-k}(r^{-1})\partial_r^k(w^2)\right),
    \label{Eqn:q_j^z}\\
    &\Lambda^z_j:=\left(\frac{\gamma-1}{2r}w
    -\frac{3-\gamma}{4}z_r\right)\partial_r^jw , 
    \label{Eqn:Lambda_j^z}\\
    &I^z_j:=\frac{\gamma-1}{2r}z+\frac{\gamma+1}{4}
    (j+1)z_r+\frac{3-\gamma}{4}jw_r . 
    \label{Eqn:I_j^z}
\end{align} 
The time evolution of $z$ along $\psi$ is then  

\begin{align}
    &\partial_t\left(\partial_r^jz\circ\psi\right)= 
    \left(-I^z_j\partial_r^jz  
    +\Lambda^z_j+Q^z_j+S^z_j  \right)\circ\psi, 
    \label{Eqn:jzEvolution}
\end{align}
which can be solved explicitly using Duhamel's formula, 

\begin{align}
    \partial_r^jz\circ\psi
    &=
    \partial_r^jz_0
    \exp\left(-\int_0^t I^z_j\circ\psi \,d\tau \right)
    \nonumber\\
    &\quad
    +\int_0^t
    \exp\left(-\int_s^t I^z_j\circ\psi \,d\tau\right)
    \left(\Lambda^z_j+Q^z_j+S^z_j\right)\circ\psi \,ds, 
    \label{Eqn:jzDuhamel}
\end{align}
where here and below, $w_0,z_0$ are understood as being evaluated at the characteristic footpoints. Applying the same calculation gives the corresponding identity for $w$: 

\begin{align*}
    \partial_2(\partial_r^jw)=-I^w_j\partial_r^jw 
     +\Lambda^w_j+Q^w_j+S^w_j , 
\end{align*} 
where 

\begin{align}
    &S_j^w:=-\sum_{k=1}^{j-2}
    \binom{j}{k}\left(\frac{\gamma-1}
    {4}\left(r^{-1}\partial_r^kw\partial_r^{j-k}w 
    +\partial_r^k(w^2)\partial_r^{j-k}(r^{-1})\right) 
    +\partial_r^{k+1}w\partial_r^{j-k}c_2\right), 
    \label{Eqn:S_j^w}\\
    &Q_j^w:=-\frac{\gamma-1}{4}
    \left(jr^{-1}w_r\partial_r^{j-1}w
    +w^2\partial_r^j(r^{-1})
    -jr^{-2}\partial_r^{j-1}(w^2) 
    \right) 
    \nonumber\\
    &+\frac{\gamma-1}{4}\left(r^{-1}\sum_{k=1}^{j-1}\binom{j}{k}\partial_r^{j-k}z 
    \partial_r^kz+\sum_{k=0}^{j-1}\binom{j}{k} 
    \partial_r^{j-k}(r^{-1})\partial_r^k(z^2)\right),
    \label{Eqn:q_j^w}\\
    &\Lambda^w_j:=\left(\frac{\gamma-1}{2r}z
    -\frac{3-\gamma}{4}w_r\right)\partial_r^jz, 
    \label{Eqn:Lambda_j^w}\\
    &I^w_j:=\frac{\gamma-1}{2r}w+\frac{\gamma+1}{4}
    (j+1)w_r+\frac{3-\gamma}{4}jz_r. 
    \label{Eqn:I_j^w}
\end{align} 
The time evolution of $w$ along $\eta$ is therefore 

\begin{align}
    &\partial_t\left(\partial_r^jw\circ\eta\right)= 
    \left(-I^w_j\partial_r^jw  
    +\Lambda^w_j+Q^w_j+S^w_j \right)\circ\eta. 
    \label{Eqn:jwEvolution}
\end{align} 
Solving this equation explicitly gives 

\begin{align}
    \partial_r^jw\circ\eta
    &=
    \partial_r^jw_0
    \exp\left(-\int_0^t I^w_j\circ\eta \,d\tau \right)
    \nonumber\\
    &\quad
    +\int_0^t
    \exp\left(-\int_s^t I^w_j\circ\eta \,d\tau\right)
    \left(\Lambda^w_j+Q^w_j+S^w_j\right)\circ\eta \,ds .
    \label{Eqn:jwDuhamel}
\end{align}
We now estimate the terms defined in \eqref{Eqn:S_j^z}--\eqref{Eqn:I_j^z} and \eqref{Eqn:S_j^w}--\eqref{Eqn:I_j^w}. The case $j=2$ is treated separately, while the estimates for $j\geq3$ are obtained inductively.

\subsubsection{Bounding $I_j$} 

We first note that the vanishing conditions in Assumption \ref{Ass:A1} are propagated in time by the radial background flow. Therefore, the following Taylor estimates hold for all $t\geq0$, 

\begin{align}
    &\left\lVert \frac{\partial_r^mw}{r^n} 
    \right\rVert_{L^\infty}\leq 
    \frac{1}{n!}\lVert\partial_r^{m+n} w\rVert
    _{L^\infty}, \label{Eqn:wOverr}\\
    &\left\lVert \frac{\partial_r^mz}{r^n} 
    \right\rVert_{L^\infty}\leq 
    \frac{1}{n!}\lVert\partial_r^{m+n} z\rVert
    _{L^\infty} \label{Eqn:zOverr}, 
\end{align} 
for all $m+n\leq N+1$. For $n=1$, the first derivative bounds in Assumption \ref{Ass:A1} imply that 

\begin{align*}
    &\left\lVert \frac{w}{r} 
    \right\rVert_{L^\infty}\leq 
    \lVert w_r\rVert
    _{L^\infty},\\
    &\left\lVert \frac{z}{r} 
    \right\rVert_{L^\infty}\leq 
    \lVert z_r\rVert
    _{L^\infty} .  
\end{align*} 
Inserting these into (\ref{Eqn:I_j^z}), (\ref{Eqn:I_j^w}) yields 

\begin{align*}
    |I_j^w(t,r)|,|I_j^z(t,r)| \leq \frac{2C_0(j+1)}
    {1+t}. 
\end{align*} 
For simplicity, we denote 

\begin{align*}
    I_j(t):=\max\left\{\lVert 
    I_j^w(t,\cdot)\rVert_{L^\infty},
    \left\lVert I_j^z(t,\cdot)
    \right\rVert_{L^\infty}
    \right\}, 
\end{align*}
which gives us  

\begin{align}
    &\int_s^t I_j(\tau) \, d\tau \leq 
    2C_0(j+1)\log\left(\frac{1+t}{1+s} \right), 
    \nonumber\\
    &\exp\left(\int_s^t I_j(\tau) \, d\tau \right) 
    \leq \left(\frac{1+t}{1+s} \right) 
    ^{2C_0(j+1)} , 
    \label{Eqn:I1_Int}\\ 
    &\exp\left(\int_0^t I_j(\tau) \, d\tau \right) 
    \leq \left(1+t\right) 
    ^{2C_0(j+1)}. \nonumber 
\end{align} 

\subsubsection{Bounding $\Lambda_j$} 

We then move to bound the coefficients of the highest-order derivatives in $\Lambda_j^z$, $\Lambda_j^w$. Using (\ref{Eqn:wOverr})--(\ref{Eqn:zOverr}) along with the first derivative bound in Assumption \ref{Ass:A1} implies   

\begin{align}
    \left\lvert \frac{\gamma-1}{2r}z
    -\frac{3-\gamma}{4}w_r \right\rvert 
    +\left\lvert \frac{\gamma-1}{2r}w
    -\frac{3-\gamma}{4}z_r \right\rvert 
    \leq \frac{4C_0}{1+t}. 
    \label{Eqn:Lambda_Coeff_Bound}
\end{align} 

\subsubsection{Bounding $Q_j$} 

We now estimate $Q_j^w,Q_j^z$. To that end, we define new terms,  

\begin{align*}
    &Z_j(t):=\sup_{r\geq0}\left\lvert 
    \partial_r^jz(t,r)\right\rvert,  
    \\
    &W_j(t):=\sup_{r\geq0}\left\lvert 
    \partial_r^jw(t,r)\right\rvert, 
    \\
    &\mathcal{M}_j(t):=\max\left\{
    Z_j(t),W_j(t)\right\} , 
    \\
    &Q_j(t):=\max\left\{\lVert 
    Q_j^w(t,\cdot)\rVert_{L^\infty},
    \left\lVert Q_j^z(t,\cdot)
    \right\rVert_{L^\infty}
    \right\}. 
\end{align*} 
Furthermore, for $j\geq2$, let 

\begin{align*}
    &\mathcal{L}_j(t):=\max_{2\leq m\leq j} 
    \mathcal{M}_m(t) . 
\end{align*} 
For $j\geq3$, the calculations in Appendix \ref{App:QjBound} give 

\begin{align*}
    Q_j(t)\leq D_j^1\mathcal{L}_{j-1}(t)^2, 
\end{align*} 
where $D_j^1$ is some constant depending only on $j$.

\subsubsection{Bounding $S_j$} 

We then seek to estimate $S_j^w$, $S_j^z$. We define 

\begin{align*}
    S_j(t):=\max\left\{\lVert 
    S_j^w(t,\cdot)\rVert_{L^\infty},
    \left\lVert S_j^z(t,\cdot)
    \right\rVert_{L^\infty}
    \right\}. 
\end{align*}
For $j\geq3$, the calculations in Appendix \ref{App:SjBound} give 

\begin{align*}
    S_j(t)\leq D_j^2 \mathcal{L}_{j-1}(t)^2, 
\end{align*} 
where $D_j^2$ is some constant depending only on $j$. We next establish the base case $j=2$ for the higher-order induction. 

\subsection{Second Derivative Bounds} 

We compute explicitly 

\begin{align*}
    &S_2^z=0, 
    \\
    &Q_2^z=\frac{\gamma-1}{2r^3}\left( 
    r^2(w_r^2-z_r^2)+2r(zz_r-ww_r)+(w^2-z^2)\right), 
    \\
    &\Lambda_2^z=\left(\frac{\gamma-1}{2r}w
    -\frac{3-\gamma}{4}z_r\right)w_{rr}, 
    \\
    &S_2^w=0 , 
    \\
    &Q_2^w=\frac{\gamma-1}{2r^3}\left( 
    r^2(z_r^2-w_r^2)+2r(ww_r-zz_r)+(z^2-w^2)\right),  
    \\
    &\Lambda_2^w=\left(\frac{\gamma-1}{2r}z
    -\frac{3-\gamma}{4}w_r\right)z_{rr}. 
\end{align*} 
We further note the bounds, 

\begin{align*}
    &\left\lvert \frac{w}{r} \right\rvert 
    ,\left\lvert \frac{z}{r} \right\rvert 
    \leq \frac{C_0}{1+t}, 
    \\
    &\left\lvert\frac{w_r}{r}\right\rvert, 
    \left\lvert \frac{2w}{r^2} \right\rvert, 
    \left\lvert\frac{z_r}{r}\right\rvert, 
    \left\lvert \frac{2z}{r^2} \right\rvert\leq 
    \mathcal{M}_2(t), 
\end{align*} 
which give  

\begin{align}
    &|Q_2^z|, |Q_2^w|\leq  \frac{7C_0}
    {1+t}\mathcal{M}_2(t) 
    \label{Eqn:Q2Bounds}. 
\end{align}
It follows from (\ref{Eqn:I1_Int})--(\ref{Eqn:Q2Bounds}) that 

\begin{align*}
    &|z_{rr}\circ\psi|\leq C_0(1+t)^{6C_0}
    \left(1+ 11\int_0^t 
    \frac{\mathcal{M}_2(s)}{(1+s)^{6C_0+1}} 
    ds\right) , 
    \\
    &|w_{rr}\circ\eta|\leq C_0(1+t)^{6C_0}
    \left(1+ 11\int_0^t 
    \frac{\mathcal{M}_2(s)}{(1+s)^{6C_0+1}} 
    ds\right) . 
\end{align*} 
Taking the supremum over $r\geq0$ and then the maximum between $W_2,Z_2$ implies 

\begin{align}
    &\mathcal{M}_2(t)\leq C_0(1+t)^{6C_0}
    \left(1+ 11\int_0^t 
    \frac{\mathcal{M}_2(s)}{(1+s)^{6C_0+1}} 
    ds\right) . 
    \label{Eqn:M2_Integral_Form}
\end{align} 
We then define 

\begin{align*}
    \mathcal{N}_2(t):=\mathcal{M}_2(t)(1+t)^{-6C_0},  
\end{align*}
which transforms (\ref{Eqn:M2_Integral_Form}). 

\begin{align*}
    \mathcal{N}_2(t)\leq C_0+11C_0\int_0^t
    \frac{\mathcal{N}_2(s)}{1+s}ds . 
\end{align*} 
By Gr\"onwall's inequality, we obtain the upper bounds,  

\begin{align*}
    &\mathcal{N}_2(t)\leq C_0(1+t)^{11C_0}, 
    \\
    &\mathcal{M}_2(t)\leq C_0(1+t)^{17C_0}. 
\end{align*} 
This completes the second derivative estimate. 

\subsection{Higher Derivative Bounds} 

We now seek to inductively bound $\mathcal{M}_j(t)$ for $3\leq j\leq N+1$. Defining 
\begin{align*}
    &D_j:=D_j^1+D_j^2,  
    \\
    &a_j:=2C_0(j+1), 
\end{align*}
gives us 

\begin{align*}
    |\partial_r^jw\circ\eta|&
    \leq(1+t)^{a_j}\left(\left|
    \partial_r^jw_0 \right| 
    +\int_0^t \left(\frac{4C_0\mathcal{M}_j(s)}
    {(1+s)^{a_j+1}}+\frac{D_j
    \mathcal{L}_{j-1}(s)^2}{(1+s)^{a_j}}\right)
    ds\right), 
    \\
    |\partial_r^jz\circ\psi|&\leq 
    (1+t)^{a_j}\left(\left|
    \partial_r^jz_0 \right| 
    +\int_0^t \left(\frac{4C_0\mathcal{M}_j(s)}
    {(1+s)^{a_j+1}}+\frac{D_j
    \mathcal{L}_{j-1}(s)^2}{(1+s)^{a_j}}\right)
    ds\right). 
\end{align*} 
Taking their supremum over $r\geq0$ and then the maximum of both equations gives 

\begin{align}
    \mathcal{M}_j(t)\leq (1+t)^{a_j}\left(
    C_0+\int_0^t \left(\frac{4C_0\mathcal{M}_j(s)}
    {(1+s)^{a_j+1}}+\frac{D_j
    \mathcal{L}_{j-1}(s)^2}{(1+s)^{a_j}}\right)
    ds\right). 
    \label{Eqn:Mj_Integral_Form}
\end{align} 
As before, we define 

\begin{align*}
    \mathcal{N}_j(t):=(1+t)^{-a_j}\mathcal{M}_j(t), 
\end{align*} 
which transforms (\ref{Eqn:Mj_Integral_Form}), 

\begin{align}
    \mathcal{N}_j(t)\leq C_0+4C_0\int_0^t 
    \frac{\mathcal{N}_j(s)}{1+s}ds+D_j\int_0^t 
    \frac{\mathcal{L}_{j-1}(s)^2}{(1+s)^{a_j}}ds. 
    \label{Eqn:Nj_Int}
\end{align} 
We now proceed with the induction. The base case is already finished with the bounding of $\mathcal{M}_2(t)$. Now, assume that $j\geq 3$. Assume by way of induction that there exist constants, $A_m,B_m$ such that  

\begin{align}
    \mathcal{M}_m(t)\leq A_m(1+t)^{B_m} 
    \label{Eqn:wzInductiveBounds}
\end{align} 
holds for all $2\leq m\leq j-1$. We then define $A_{\ell}^*,B_{\ell}^*$ for $1\leq \ell\leq N+1$ as

\begin{align*}
    A_\ell^*
    &:=
    \begin{cases}
        C_0, & \ell=1,
        \\
        \displaystyle\max_{2\leq m\leq\ell}A_m,
        & 2\leq\ell\leq N+1,
    \end{cases}
    \\
    B_\ell^*
    &:=
    \begin{cases}
        0, & \ell=1,
        \\
        \displaystyle\max_{2\leq m\leq\ell}B_m,
        & 2\leq\ell\leq N+1.
    \end{cases}
\end{align*}
These definitions are used within this inductive proof and retained throughout the remainder of this paper. This gives us  

\begin{align*}
    &\mathcal{L}_{j-1}(t)\leq A_{j-1}^* 
    (1+t)^{B_{j-1}^*}, 
    \\
    &\mathcal{L}_{j-1}(t)^2\leq (A_{j-1}^*)^2 
    (1+t)^{2B_{j-1}^*} . 
\end{align*} 
We then compute 

\begin{align*}
    D_j\int_0^t 
    \frac{\mathcal{L}_{j-1}(s)^2}{(1+s)^{a_j}}ds 
    &\leq D_j(A_{j-1}^*)^2\int_0^t 
    (1+s)^{2B_{j-1}^*-a_j}ds. 
\end{align*} 
This integral is bounded above by a polynomial in $1+t$.

\begin{align}
    D_j(A_{j-1}^*)^2\int_0^t 
    (1+s)^{2B_{j-1}^*-a_j}ds\leq 
    \hat A_j (1+t)^{\hat B_j} 
    \label{Eqn:Dj_Term_Int},  
\end{align}
where $\hat A_j,\hat B_j > 0$. We then insert (\ref{Eqn:Dj_Term_Int}) into (\ref{Eqn:Nj_Int}) to obtain 

\begin{align*}
    \mathcal{N}_{j}(t)\leq C_0+
    \hat A_j (1+t)^{\hat B_j}
    +4C_0\int_0^t\frac{\mathcal{N}_j(s)}{1+s}ds,  
\end{align*}
which may be bounded by Gr\"onwall's inequality. 

\begin{align*}
    &\mathcal{N}_j(t)\leq \left( 
    C_0+\hat A_j (1+t)^{\hat B_j}\right) 
    (1+t)^{4C_0} . 
\end{align*} 
Returning to $\mathcal{M}_j(t)=(1+t)^{a_j}\mathcal{N}_j(t)$, we obtain

\begin{align*}
    \mathcal{M}_j(t)
    \leq (C_0+\hat A_j)(1+t)^{a_j+\hat B_j+4C_0}.
\end{align*}
Thus we have 

\begin{align*}
    &A_j:=C_0+\hat A_j,
    \\
    &B_j:=a_j+\hat B_j+4C_0.
\end{align*}
This completes the induction. 

\begin{remark}
    We note that the refined upper bound for the first derivatives of the rarefactive background in Lemma \ref{Thm:Refined_Upper_Bound} is crucial for obtaining polynomial upper bounds on the higher derivatives. Indeed, repeating the induction under only a uniform constant bound on the first derivatives yields exponential upper bounds for each higher derivative. Repeating the calculations in Section 4 under this more general bound yields a stability timespan of order 
    \begin{align*}
        \log\log\frac{1}{\varepsilon}. 
    \end{align*}
    Thus, it is clear that the rarefactive nature of the background substantially improves the estimated stability lifespan from doubly logarithmic to polylogarithmic order. 
\end{remark} 

\subsection{Cartesian Derivatives} 

In preparation for the next section, we seek to bound our Cartesian derivatives in terms of our radial bounds. To that end, we utilize Lemmas \ref{Lemma:CartRadScalar} and \ref{Lemma:CartRadVector} in Appendix \ref{App:RadialCartesian}. By Lemma \ref{Lemma:CartRadScalar} and Assumption \ref{Ass:A1}, 

\begin{align*}
    \left|\partial^\alpha \bar h(x)\right| 
    \lesssim \sum_{j=1}^k 
    \frac{|\partial_r^j\bar h(r)|}{r^{k-j}}, 
\end{align*} 
where $|\alpha|=k$, and $k\leq N+1$. Furthermore, since 

\begin{align*}
    \partial_r^j\bar h(t,0)=0 \qquad \forall 0\leq j\leq N+1, 
\end{align*}
we have the following bound, for some constant, $D_\alpha$,  

\begin{align*}
    \lVert\partial^\alpha \bar h\rVert
    _{L^\infty(\mathbb{R}^3)} 
    \leq D_\alpha \lVert \partial_r^k \bar h\rVert 
    _{L^\infty(\mathbb{R})} .
\end{align*} 
We further note that in the particular case of $|\alpha|=1$ that $D_\alpha=1$,  

\begin{align*}
    \lVert \nabla \bar h\rVert_{L^\infty(\mathbb{R}^3)} 
    = \lVert \partial_r\bar h\rVert_{L^\infty(\mathbb{R})}.
\end{align*} 
Similarly, by Lemma \ref{Lemma:CartRadVector} and Assumption \ref{Ass:A1},  

\begin{align*}
    \lVert\partial^\alpha\bar\bfu\rVert
    _{L^\infty}\leq D_\alpha \lVert 
    \partial_r^k \bar u\rVert_{L^\infty}, 
\end{align*} 
for any $|\alpha|=k\leq N+1$. 

\section{Evolution of the Perturbed Solution} 

We now wish to perturb our classical solution. The Euler Equations prior to singularity formation are given by, 

\begin{align}
    &h_t+\bfu\cdot\nabla h+ah\nabla\cdot\bfu=0,
    \label{Eqn:Non-Isent1}
    \\
    &\bfu_t+(\bfu\cdot\nabla)\bfu+\frac{1}{a}h\nabla h
    -\frac{1}{2a\gamma}h^2\nabla S=0,
    \label{Eqn:Non-Isent2}
    \\
    &S_t+\bfu\cdot\nabla S=0, 
    \label{Eqn:Non-Isent3}
\end{align}
where $a:=(\gamma-1)/2$. Let the background solution be denoted by 

\begin{align*}
    \mathcal{B}:=(\bar h,\bar\bfu,\bar S), 
\end{align*}
and let 

\begin{align*}
    \mathcal{W}:=(\sigma,\bfv,\theta), 
\end{align*} 
denote the perturbation. Then, define the perturbed solution as 

\begin{align*}
    (H,\bfU,S):=(\bar h+\sigma, \bar\bfu+\bfv, 
    \bar S+\theta). 
\end{align*}
Using these, we obtain the perturbed system, 

\begin{align*}
    &(\bar h+\sigma)_t+(\bar\bfu+\bfv)
    \cdot\nabla (\bar h+\sigma)
    +a(\bar h+\sigma)\nabla
    \cdot (\bar\bfu+\bfv)=0, 
    \\
    &(\bar\bfu+\bfv)_t+((\bar\bfu+\bfv)\cdot\nabla)
    (\bar\bfu+\bfv)+\frac{1}{a}(\bar h+\sigma)
    \nabla (\bar h+\sigma)-\frac{(\bar h+\sigma)^2}
    {2a\gamma}\nabla (\bar S+\theta) = 0, 
    \\
    &(\bar S+\theta)_t+(\bar\bfu+\bfv)\cdot\nabla 
    (\bar S+\theta)= 0 . 
\end{align*} 
Using the fact that $\mathcal{B}$ is a solution to (\ref{Eqn:Non-Isent1})--(\ref{Eqn:Non-Isent3}), we obtain the simplified system, 

\begin{align*}
    &\sigma_t
    +\bfU\cdot\nabla\sigma
    +\mathbf v\cdot\nabla\bar h
    +aH\nabla\cdot\mathbf v
    +a\sigma\nabla\cdot\bar\bfu
    =0, 
    \\
    &\mathbf v_t
    +\bfU\cdot\nabla\mathbf v
    +\mathbf v\cdot\nabla\bar\bfu
    +\frac{1}{a}H\nabla\sigma
    +\frac{1}{a}\sigma\nabla\bar h 
    -\frac{H^2}{2a\gamma}\nabla \theta 
    -\frac{H^2-\bar h^2}{2a\gamma}\nabla \bar S
    =0, 
    \\
    &\theta_t+\bfU\cdot\nabla \theta 
    +\bfv\cdot\nabla\bar S=0. 
\end{align*} 
We then rearrange the system to obtain

\begin{align}
    &\sigma_t
    +\bfU\cdot\nabla\sigma
    +aH\nabla\cdot\mathbf v
    =-\mathbf v\cdot\nabla\bar h
    -a\sigma\nabla\cdot\bar\bfu, 
    \label{Eqn:Perturbed1}
    \\
    &\mathbf v_t
    +(\bfU\cdot\nabla)\mathbf v
    +\frac{1}{a}H\nabla\sigma 
    -\frac{H^2}{2a\gamma}\nabla\theta 
    =-(\bfv\cdot\nabla)\bar\bfu
    -\frac{1}{a}\sigma\nabla\bar h
    +\frac{H^2-\bar h^2}{2a\gamma}\nabla\bar S, 
    \label{Eqn:Perturbed2}
    \\
    &\theta_t+\bfU\cdot\nabla \theta= 
    -\bfv\cdot\nabla\bar S. 
    \label{Eqn:Perturbed3}
\end{align} 

\subsection{Energy Form} 

We seek an appropriate quadratic form which will adequately bound the $H^N$ norm of our perturbation. To that end, we first define the energy density, 

\begin{align}
    &e_\alpha:=(\partial^\alpha\sigma)^2 
    +a^2\left|\partial^\alpha\bfv\right|^2 
    +\lambda(\partial^\alpha \theta)^2 
    -\frac{1}{\gamma}H\partial^\alpha \theta 
    \partial^\alpha\sigma, 
    \label{Eqn:ptwiseEnergyDensity}
\end{align} 
where $|\alpha|=k\leq N$ is some multi-index, and where $\lambda$ will be defined later to be large enough so as to make $e_\alpha$ coercive. Using this, we define the $k$-th order energy, 

\begin{align*}
    E_k(t):=\frac{1}{2}\int_{\mathbb{R}^3} 
    \sum_{|\alpha|=k}e_\alpha \, dx, 
\end{align*} 
and the cumulative $N$-th order energy, 

\begin{align*}
    \mathcal{E}_N(t):=\sum_{k=0}^N E_k(t). 
\end{align*} 
It remains to show that $e_\alpha$ is coercive with respect to its inputs. This will be achieved primarily through bootstrapping. 

\begin{assumption}[Bootstrap Assumption]
\label{Ass:Bootstrap}
    Assume that 

    \begin{align*}
        \lVert H \rVert_{L^\infty([0,T^\varepsilon]
        \times \mathbb{R}^3)} \leq 5C_0. 
    \end{align*}
    
\end{assumption} 

Using Assumption \ref{Ass:Bootstrap}, we define 

\begin{align*}
    \lambda:=10C_0^2, 
\end{align*} 
and claim that $e_\alpha$ is coercive over $[0,T^\varepsilon]$. Indeed, (\ref{Eqn:ptwiseEnergyDensity}) can be rewritten as 

\begin{align*}
    &e_\alpha=\left(\partial^\alpha\sigma 
    -\frac{H}{2\gamma}\partial^\alpha \theta\right)^2 
    +a^2|\partial^\alpha\bfv|^2 
    +\left(\lambda-\frac{H^2}{4\gamma^2}\right) 
    (\partial^\alpha \theta)^2.  
\end{align*} 
Then, noting that 

\begin{align}
    \lambda-\frac{H^2}{4\gamma^2} \geq 
    10C_0^2-\frac{25C_0^2}{4}>0, 
    \label{Eqn:LambdaCoercivity}
\end{align} 
establishes the coercivity of $e_\alpha$. 

\subsection{Zeroth-Order Energy} 
\label{Subsection:ZerothEnergy}

We seek to compute 

\begin{align*}
    E_0(t):=\frac{1}{2}\int_{\mathbb{R}^3} 
    \left(\sigma^2+a^2|\bfv|^2+\lambda 
    \theta^2-\frac{1}{\gamma}H\theta\sigma\right) 
    \, dx . 
\end{align*} 
We multiply (\ref{Eqn:Perturbed1}) by 

\begin{align*}
    \sigma-\frac{H}{2\gamma}\theta, 
\end{align*} 
giving us 

\begin{align}
    \frac{1}{2}&(\sigma^2)_t 
    -\frac{H}{2\gamma}\theta\sigma_t 
    +\frac{1}{2}\bfU\cdot\nabla
    \sigma^2 
    -\frac{H}{2\gamma} \theta 
    \bfU\cdot\nabla\sigma 
    +aH\sigma\nabla\cdot
    \bfv-\frac{aH^2}{2\gamma}
    \theta\nabla\cdot\bfv\nonumber\\
    &=\left(\sigma
    -\frac{H}{2\gamma}\theta\right)
    \left( 
    -\bfv\cdot\nabla\bar h 
    -a\sigma\nabla\cdot\bar\bfu
    \right). 
    \label{Eqn:sigmatEnergy}
\end{align} 
Then multiply (\ref{Eqn:Perturbed2}) by $a^2\bfv$, 

\begin{align}
    &\frac{a^2}{2}(|\bfv|^2)_t 
    +\frac{a^2}{2}\bfU\cdot\nabla|\bfv|^2 
    +aH\bfv\cdot\nabla\sigma 
    -\frac{a}{2\gamma}H^2\bfv\cdot\nabla \theta 
    \nonumber\\
    &=-a^2\bfv\cdot((\bfv\cdot\nabla)\bar\bfu) 
    -a\sigma\bfv\cdot\nabla\bar h 
    +\frac{a(H^2-\bar h^2)}{2\gamma} 
    \bfv\cdot\nabla\bar S. 
    \label{Eqn:vtEnergy}
\end{align}
Then multiplying (\ref{Eqn:Perturbed3}) by 

\begin{align*}
    \lambda \theta-\frac{H}{2\gamma}\sigma, 
\end{align*} 
gives us 

\begin{align}
    \frac{\lambda}{2}(\theta^2)_t -\frac{H}{2\gamma} 
    \sigma \theta_t + \frac{\lambda}{2}\bfU\cdot 
    \nabla(\theta^2)-\frac{H}{2\gamma}\sigma 
    \bfU\cdot\nabla \theta=-\left( 
    \lambda\theta-\frac{H\sigma}{2\gamma}\right) 
    \bfv\cdot\nabla\bar S. 
    \label{Eqn:StEnergy}
\end{align} 
We then add (\ref{Eqn:sigmatEnergy})--(\ref{Eqn:StEnergy}) to obtain 

\begin{align*}
    &\frac{1}{2}(e_0)_t+\frac{\theta\sigma}{2\gamma}H_t 
    +\frac{1}{2}\bfU\cdot\nabla e_0+\frac{\theta\sigma}
    {2\gamma}\bfU\cdot\nabla H 
    +a\nabla\cdot\left(H\left( 
    \sigma-\frac{H\theta}{2\gamma}\right)\bfv 
    \right) 
    \nonumber\\
    &+a\left(\frac{H\theta}{\gamma}-\sigma\right)\bfv 
    \cdot\nabla H=\left(\sigma-\frac{H\theta}{2\gamma} 
    \right)\left(-\bfv\cdot\nabla\bar h 
    -a\sigma\nabla\cdot\bar\bfu\right)
    \nonumber\\
    &-a^2\bfv\cdot((\bfv\cdot\nabla)\bar\bfu) 
    -a\sigma\bfv\cdot\nabla\bar h 
    +\left(\frac{\sigma((a+1)H+a\bar h)}{2\gamma}  
    -\lambda\theta  
    \right)\bfv\cdot\nabla\bar S. 
\end{align*} 
By a standard spatial cutoff argument, we obtain, 

\begin{align*}
    &\int_{\mathbb{R}^3} 
    a\nabla\cdot\left(H\left( 
    \sigma-\frac{H\theta}{2\gamma}
    \right)\bfv 
    \right) \, dx = 0, 
    \\
    &\int_{\mathbb{R}^3}\bfU\cdot\nabla e_0 
    \, dx = -\int_{\mathbb{R}^3}e_0
    \nabla\cdot\bfU \, dx. 
\end{align*}  
Furthermore, we note that 

\begin{align*}
    H_t+\bfU\cdot\nabla H=-aH\nabla\cdot\bfU. 
\end{align*}
Therefore, 

\begin{align}
    \frac{d}{dt}E_0(t)=&\frac{1}{2}\int_{\mathbb{R}^3}
    e_0\nabla\cdot\bfU\, dx+\frac{a}{2\gamma} 
    \int_{\mathbb{R}^3}H\theta\sigma\nabla\cdot\bfU 
    \,dx\nonumber\\
    -&a\int_{\mathbb{R}^3}\left(\frac{H\theta}{\gamma} 
    -\sigma\right)\bfv\cdot\nabla H\,dx 
    \nonumber\\
    -&\int_{\mathbb{R}^3}\left( 
    \sigma-\frac{H\theta}{2\gamma}\right)
    \left(\bfv\cdot\nabla\bar h 
    +a\sigma\nabla\cdot\bar\bfu\right)\,dx 
    \nonumber\\
    -&a^2\int_{\mathbb{R}^3}\bfv\cdot((\bfv 
    \cdot\nabla)\bar\bfu)\,dx 
    -a\int_{\mathbb{R}^3}\sigma\bfv\cdot 
    \nabla\bar h\,dx 
    \nonumber\\
    +&\int_{\mathbb{R}^3}\left(\frac{\sigma
    ((a+1)H+a\bar h)}{2\gamma} 
    -\lambda\theta  
    \right)\bfv\cdot\nabla\bar S \, dx
    \label{Eqn:E0_Evolution}
\end{align}

\subsection{Higher-Order Energy}

We now derive energy estimates for $1\leq k\leq N$. First, define the following notation for multi-indices, 

\begin{align*}
    \binom{\alpha}{\beta}:=\binom{\alpha_1}{\beta_1} 
    \binom{\alpha_2}{\beta_2}\binom{\alpha_3}{\beta_3},  
\end{align*} 
where $\beta\leq\alpha$ if $\beta_i\leq\alpha_i$ for all $i$. We then introduce the following commutator relations where $|\alpha|=k\geq1$, 

\begin{align*}
    &\mathcal{J}_1^\alpha:=\partial^\alpha (\bfU\cdot\nabla\sigma) 
    -\bfU\cdot\nabla\partial^\alpha\sigma
    -\partial^\alpha \bfv\cdot\nabla\sigma, 
    \\
    &\mathcal{J}_2^\alpha:=\partial^\alpha
    (H\nabla\cdot\bfv) 
    -H\nabla\cdot\partial^\alpha\bfv 
    -\partial^\alpha \sigma\nabla\cdot\bfv, 
    \\
    &\mathcal{J}_3^\alpha:=\partial^\alpha
    ((\bfU\cdot\nabla)\bfv) 
    -(\bfU\cdot\nabla)\partial^\alpha\bfv 
    -(\partial^\alpha \bfv\cdot\nabla)\bfv, 
    \\
    &\mathcal{J}_4^\alpha:=\partial^\alpha
    (H\nabla\sigma) 
    -H\nabla\partial^\alpha\sigma 
    -\partial^\alpha \sigma\nabla\sigma, 
    \\
    &\mathcal{J}_5^\alpha:=\partial^{\alpha}
    (H^2\nabla \theta) 
    - H^2\nabla\partial^\alpha \theta
    -2H\partial^\alpha \sigma\nabla\theta, 
    \\
    &\mathcal{J}_6^\alpha:=\partial^\alpha
    (\bfU\cdot\nabla\theta) 
    -\partial^\alpha\bfv\cdot\nabla\theta 
    -\bfU\cdot\nabla\partial^\alpha 
    \theta, 
    \\
    &\mathcal{J}_7^\alpha:=\partial^\alpha
    (\bfv\cdot\nabla\bar h) 
    -\partial^\alpha\bfv\cdot\nabla\bar h, 
    \\
    &\mathcal{J}_8^\alpha:=\partial^\alpha
    ((\bfv\cdot\nabla)\bar\bfu) 
    -(\partial^\alpha\bfv\cdot\nabla)\bar\bfu, 
    \\
    &\mathcal{J}_9^\alpha:=\partial^\alpha
    (\sigma\nabla\cdot\bar\bfu) 
    -\partial^\alpha\sigma\nabla\cdot\bar\bfu, 
    \\
    &\mathcal{J}_{10}^\alpha:=\partial^\alpha
    (\sigma\nabla\bar h) 
    -\partial^\alpha\sigma\nabla\bar h, 
    \\
    &\mathcal{J}_{11}^\alpha:=\partial^\alpha
    (\sigma^2\nabla\bar S) 
    -2\sigma\nabla\bar S\partial^\alpha\sigma, 
    \\
    &\mathcal{J}_{12}^\alpha:=\partial^\alpha
    (\sigma\bar h\nabla\bar S) 
    -\bar h\nabla\bar S(\partial^\alpha\sigma), 
    \\
    &\mathcal{J}_{13}^\alpha:=\partial^\alpha
    (\bfv\cdot\nabla\bar S) 
    -(\partial^\alpha\bfv)\cdot\nabla\bar S,  
\end{align*} 
whose expansions and $L^2$ estimates are given in Appendix \ref{App:CommExpanded}. The essential point of this decomposition is that the remainder terms contain derivatives of at most order $|\alpha|=k$ of the perturbation variables. Using these, we define three terms, 

\begin{align*}
    &\mathcal{F}^\alpha:=-\mathcal{J}_1^\alpha
    -a\mathcal{J}_2^\alpha 
    -\mathcal{J}_7^\alpha
    -a\mathcal{J}_9^\alpha, 
    \\
    &\mathcal{G}^\alpha:=-\mathcal{J}_3^\alpha
    -\frac{1}{a}\mathcal{J}_4^\alpha 
    +\frac{1}{2a\gamma}\mathcal{J}_5^\alpha 
    -\mathcal{J}_8^\alpha
    -\frac{1}{a}\mathcal{J}_{10}^\alpha 
    +\frac{1}{2a\gamma}\mathcal{J}_{11}^\alpha 
    +\frac{1}{a\gamma}\mathcal{J}_{12}^\alpha, 
    \\
    &\mathcal{H}^\alpha:=-\mathcal{J}_6^\alpha 
    -\mathcal{J}_{13}^\alpha, 
\end{align*} 
along with remainder terms, 

\begin{align*}
    R^\alpha_1:=&-\partial^\alpha\bfv\cdot\nabla H 
    -a\partial^\alpha \sigma\nabla\cdot\bfU
    +\mathcal{F}^\alpha , 
    \\
    R^\alpha_2:=&-(\partial^\alpha \bfv\cdot\nabla)\bfU 
    -\frac{1}{a}\partial^\alpha \sigma\nabla H 
    +\frac{H}{a\gamma}\partial^\alpha\sigma 
    \nabla S+\mathcal{G}^\alpha , 
    \\
    R^\alpha_3:=&-\partial^\alpha\bfv\cdot\nabla S 
    +\mathcal{H}^\alpha, 
\end{align*} 
allowing us to rewrite the differentiated perturbation equations, 

\begin{align*}
    &(\partial^\alpha\sigma)_t
    +\bfU\cdot\nabla\partial^\alpha\sigma
    +aH\nabla\cdot\partial^\alpha\bfv=
    R^\alpha_1, 
    \\ 
    &(\partial^\alpha\bfv)_t
    +(\bfU\cdot\nabla)\partial^\alpha\bfv
    +\frac{1}{a}H\nabla\partial^\alpha\sigma 
    -\frac{1}{2a\gamma}H^2\nabla\partial^\alpha 
    \theta=R^\alpha_2 , 
    \\
    &(\partial^\alpha \theta)_t+\bfU\cdot\nabla 
    \partial^\alpha \theta = R^\alpha_3. 
\end{align*} 
Applying the same multipliers and simplifications as in Section \ref{Subsection:ZerothEnergy} yields the following differential relation on the $k$-th order energy. Details are shown in Appendix \ref{App:HigherOrderEnergy}.

\begin{align}
    &\frac{d}{dt} E_k(t) 
    = \frac{1}{2}\sum_{|\alpha|=k}\int_{\mathbb{R}^3} 
    e_\alpha \nabla\cdot\bfU \, dx
    +\sum_{|\alpha|=k}\biggl[\int_{\mathbb{R}^3}
    \frac{aH}{2\gamma}
    \partial^\alpha \theta 
    \partial^\alpha\sigma\nabla\cdot\bfU dx
    \nonumber\\
    &-a\int_{\mathbb{R}^3}\nabla\cdot\left(H\left( 
    \partial^\alpha\sigma-\frac{H}{2\gamma}
    \partial^\alpha\theta\right)\partial^\alpha\bfv 
    \right)dx
    \nonumber\\
    &+a\int_{\mathbb{R}^3}
    \left(\partial^\alpha\sigma 
    -\frac{H}{\gamma}\partial^\alpha\theta\right)
    \partial^\alpha\bfv\cdot\nabla H dx
    \nonumber\\
    &+\int_{\mathbb{R}^3}
    \left(\partial^\alpha\sigma-\frac{H}{2\gamma} 
    \partial^\alpha\theta\right)R_1^\alpha dx
    +a^2\int_{\mathbb{R}^3}
    \partial^\alpha\bfv\cdot R_2^\alpha dx
    \nonumber\\
    &+\int_{\mathbb{R}^3}\left(\lambda
    \partial^\alpha \theta 
    -\frac{H}{2\gamma}\partial^\alpha\sigma
    \right)R_3^\alpha dx\biggr]. 
    \label{Eqn:ddtEkUnsimplifiedText}
\end{align} 
Each term in (\ref{Eqn:ddtEkUnsimplifiedText}) is then bounded as shown in Appendix \ref{App:HigherOrderEnergy}. 

\begin{align}
    \frac{d}{dt}E_k(t)&\lesssim \biggl(\lVert\nabla\bfv 
    \rVert_{L^\infty}+\lVert\nabla\sigma 
    \rVert_{L^\infty}+\lVert\nabla\theta 
    \rVert_{L^\infty}+\lVert\nabla\bar\bfu 
    \rVert_{L^\infty}+\lVert\nabla\bar h 
    \rVert_{L^\infty}\nonumber\\
    +&\lVert\nabla \bar S 
    \rVert_{L^\infty}\biggr)E_k(t)
    +\sum_{|\alpha|=k}\left(
    \lVert\mathcal{F}^\alpha\rVert_{L^2} 
    +\lVert\mathcal{G}^\alpha\rVert_{L^2}
    +\lVert\mathcal{H}^\alpha\rVert_{L^2}\right) 
    E_k(t)^{1/2}.  
    \label{Eqn:ddtEk}
\end{align}
Finally, the evolution of the cumulative N-th order energy is determined by combining (\ref{Eqn:E0_Evolution}) with 
(\ref{Eqn:ddtEk}). Indeed, by Sobolev embedding, the decay of the first derivatives of $\bar\bfu$, $\bar h$ from Assumption \ref{Ass:A1}, and the isentropic nature of the background solution, letting $N\geq 4$ yields  

\begin{align*}
    \frac{d}{dt}\mathcal{E}_N(t)\lesssim &
    \left((1+t)^{-1} +\mathcal{E}_N(t)^{1/2}\right)\mathcal{E}_N(t)
    \nonumber\\
    +&\mathcal{E}_N(t)^{1/2}\sum_{1\leq|\alpha|\leq N}\left(
    \lVert\mathcal{F}^\alpha\rVert_{L^2} 
    +\lVert\mathcal{G}^\alpha\rVert_{L^2}
    +\lVert\mathcal{H}^\alpha\rVert_{L^2}\right) . 
\end{align*} 
It remains to bound the commutator norms. The estimates in Appendix \ref{App:CommExpanded} imply 

\begin{align}
    &\sum_{1\leq|\alpha|\leq N}(\lVert\mathcal{F}^\alpha 
    \rVert_{L^2}+\lVert\mathcal{G}^\alpha\rVert_{L^2}
    +\lVert\mathcal{H}^\alpha\rVert_{L^2}) 
    \lesssim (1+t)^{2B_{N+1}^*}
    (\mathcal{E}_N(t)^\frac{1}{2}
    +\mathcal{E}_N(t))
    \nonumber\\&+\mathcal{E}_N(t)
    +\mathcal{E}_N(t)^{3/2}, 
    \label{Eqn:FGH_L2Bounds}
\end{align} 
which gives the desired energy relation, 

\begin{align*}
    &\frac{d}{dt}\mathcal{E}_N(t) 
    \lesssim(1+t)^{-1}\mathcal{E}_N(t) 
    +\mathcal{E}_N(t)^{3/2} 
    \nonumber\\
    &+\mathcal{E}_N(t)^{1/2}\biggl( 
    (1+t)^{2B_{N+1}^*}
    (\mathcal{E}_N(t)^\frac{1}{2}
    +\mathcal{E}_N(t))
    +\mathcal{E}_N(t)
    +\mathcal{E}_N(t)^{3/2}
    \biggr). 
\end{align*} 
This may be simplified as 

\begin{align*}
    \frac{d}{dt}\mathcal{E}_N(t) 
    \lesssim 
    (1+t)^{2B_{N+1}^*} 
    \mathcal{E}_N(t) 
    +(1+t)^{2B_{N+1}^*} 
    \mathcal{E}_N(t)^{3/2} 
    +\mathcal{E}_N(t)^2. 
\end{align*} 

\section{Local Existence and Lifespan Estimate} 

Using the estimates from Section 3, we now prove Theorem \ref{Thm:PerturbedSoln}. This will be established in two steps. First, we establish local existence of (\ref{Eqn:Perturbed1})--(\ref{Eqn:Perturbed3}) even in the presence of the point vacuum at zero and at spatial infinity. Indeed, noting the uniform bounds derived in Sections 2 and 3, local existence follows as a direct application of \cite{Kato1975}. 

Once local existence has been established, we derive the stability timespan claimed in Theorem \ref{Thm:PerturbedSoln}. Using this, we close our bootstrap assumptions and continue the solution until $T^\varepsilon$. 

\subsection{Local Existence of the Perturbation} 

We first recall the perturbed system, 
\begin{align*}
    &\sigma_t
    +\bfU\cdot\nabla\sigma
    +aH\nabla\cdot\mathbf v
    =-\mathbf v\cdot\nabla\bar h
    -a\sigma\nabla\cdot\bar\bfu, 
    \\
    &\mathbf v_t
    +(\bfU\cdot\nabla)\mathbf v
    +\frac{1}{a}H\nabla\sigma 
    -\frac{H^2}{2a\gamma}\nabla\theta 
    =-(\bfv\cdot\nabla)\bar\bfu
    -\frac{1}{a}\sigma\nabla\bar h
    +\frac{H^2-\bar h^2}{2a\gamma}\nabla\bar S, 
    \\
    &\theta_t+\bfU\cdot\nabla \theta= 
    -\bfv\cdot\nabla\bar S. 
\end{align*} 
We seek to put this in symmetric-hyperbolic form. Define 
\begin{align*}
    &B_j:=\begin{bmatrix}
        U_j&aHe_j^\top&0\\
        \frac{1}{a}He_j&U_jI_3 
        &-\frac{1}{2a\gamma}H^2 e_j\\
        0&0&U_j
    \end{bmatrix}, 
    \\
    &F:=\begin{bmatrix}
        -\bfv\cdot\nabla\bar h-a\sigma\nabla 
        \cdot\bar\bfu \\
        -(\bfv\cdot\nabla)\bar\bfu 
        -\frac{1}{a}\sigma\nabla\bar h 
        +\frac{1}{2a\gamma}(H^2-\bar h^2) 
        \nabla\bar S \\
        -\bfv\cdot\nabla\bar S 
    \end{bmatrix}. 
\end{align*}
Recalling that 
\begin{align*}
    \mathcal{W}=\begin{bmatrix}
        \sigma\\\bfv\\\theta
    \end{bmatrix}\in\mathbb{R}^5, 
\end{align*} 
we obtain 
\begin{align}
    \partial_t\mathcal{W}+\sum_{j=1}^3 
    B_j(t,x,\mathcal{W})\partial_{x_j}
    \mathcal{W}=F(t,x,\mathcal{W}). 
    \label{Eqn:unsymmetrized}
\end{align} 
We then define its symmetrizer, 
\begin{align}
    A_0(H):=\begin{bmatrix}
        1&0&-\frac{1}{2\gamma}H\\
        0&a^2I_3&0\\
        -\frac{1}{2\gamma}H&0&\lambda 
    \end{bmatrix}.  
    \label{Eqn:symmetrizer}
\end{align} 
We then note from Assumptions \ref{Ass:A1}--\ref{Ass:A2} that $\lVert H_0\rVert_{L^\infty}<5C_0$ for $\varepsilon\ll1$. Furthermore, $A_0(H)$ is uniformly positive definite for $|H|\leq5C_0$. Applying (\ref{Eqn:symmetrizer}) to (\ref{Eqn:unsymmetrized}) yields 
\begin{align}
    &A_0(H)\partial_t\mathcal{W}+ 
    \sum_{j=1}^3A_j(t,x,\mathcal{W}) 
    \partial_{x_j}\mathcal{W} 
    =A_0(H)F(t,x,\mathcal{W}), 
    \label{Eqn:symmetrizedSystem}
\end{align} 
where 
\begin{align*}
    A_j(t,x,\mathcal{W})=\begin{bmatrix}
        U_j&aHe_j^\top&-\frac{1}{2\gamma}HU_j\\ 
        aHe_j&a^2U_jI_3&-\frac{a}{2\gamma}H^2e_j\\ 
        -\frac{1}{2\gamma}HU_j&-\frac{a}{2\gamma} 
        H^2e_j^\top&\lambda U_j 
    \end{bmatrix}, 
\end{align*} 
which are symmetric. Hence, (\ref{Eqn:symmetrizedSystem}) forms a quasilinear symmetric hyperbolic system. As previously established, $A_0(H)$ is uniformly positive definite. Furthermore, $A_0$, $A_j$, and $A_0F$ are all smooth functions of $\mathcal{W}$ and have the necessary regularity in $(t,x)$. Furthermore, $F$ vanishes whenever $\mathcal{W}=0$. Since $N\geq 4$, local existence for some positive time follows directly from \cite{Kato1975}. 

\subsection{Closure of the Bootstrap and Lifespan Estimate} 

In the previous subsection, we established local existence on some positive time interval, $[0,T_0]$. Let $T^\varepsilon_{\mathrm{life}}$ denote its maximal lifespan. It remains to show that $T^\varepsilon_{\mathrm{life}}\geq T^\varepsilon$ as defined in Theorem \ref{Thm:PerturbedSoln}. In this section, we close our estimates and determine a lower bound on $T^\varepsilon$. First, consider any interval $[0,T]$ where

\begin{align*}
    T < \min\{T^\varepsilon,T^\varepsilon
    _{\mathrm{life}}\}. 
\end{align*} 
Assume that the initial perturbation energy satisfies 

\begin{align*}
    \mathcal{E}_N(0)\leq C_2\varepsilon^2
\end{align*}
for some $C_2>0$. Then, set $\delta>0$ as in Theorem \ref{Thm:PerturbedSoln}. By coercivity and Sobolev embedding, there exists some $C_8\geq1$ such that whenever $\lVert H\rVert_{L^\infty}\leq5C_0$, the following bound holds, 
\begin{align}
    &\lVert\bfv\rVert_{W^{k,\infty}}
    +\lVert\sigma\rVert_{W^{k,\infty}}
    +\lVert\theta\rVert_{W^{k,\infty}}
    \leq C_8\mathcal{E}_N(t)^{1/2}.
    \label{Eqn:FinalSobolevBound}
\end{align} 
Using this, we define 
\begin{align*}
    \tilde \delta
    :=\min\left\{\frac{C_0}{2C_8},
    \frac{\delta}{2C_8}\right\}, 
\end{align*} 
and let 
\begin{align}
    \varepsilon<\frac{\tilde\delta}{\sqrt{2C_2}} 
    \label{Eqn:smallEpsilon}. 
\end{align}
We then impose the following bootstrap assumptions,   

\begin{align}
    \mathcal{E}_N(t)\leq\tilde\delta^2,
    \qquad
    \left\lVert H\right\rVert_{L^\infty([0,T]\times\mathbb{R}^3)}
    \leq5C_0.
    \label{Eqn:EnergyBoot}
\end{align}
This gives us 

\begin{align*}
    &\mathcal{E}_N(t)^\frac{3}{2}\leq 
    \tilde\delta \mathcal{E}_N(t), 
    \\
    &\mathcal{E}_N(t)^2\leq \tilde\delta ^2 
    \mathcal{E}_N(t), 
\end{align*} 
implying that 

\begin{align*}
    \frac{d}{dt}\mathcal{E}_N(t)&\leq 
    C_3(1+t)^{2B_{N+1}^*}\mathcal{E}_N(t) 
    +C_4\tilde\delta (1+t)^{2B_{N+1}^*}
    \mathcal{E}_N(t) 
    +C_5\tilde\delta ^2\mathcal{E}_N(t), 
\end{align*} 
for some $C_3,C_4,C_5>0$. Furthermore, since $B_{N+1}^*>0$, 

\begin{align*}
    \frac{d}{dt}\mathcal{E}_N(t)\leq 
    C_6(1+t)^{2B_{N+1}^*}\mathcal{E}_N(t), 
\end{align*} 
for some constant $C_6>0$. Applying Gr\"onwall's inequality yields   

\begin{align*}
    \mathcal{E}_N(t)&\leq C_2\varepsilon^2 
    \exp\left({C_6\int_{0}^t(1+s)
    ^{2B_{N+1}^*}ds}\right)
    \nonumber\\
    &= C_2\varepsilon^2\exp
    \left(\frac{C_6}
    {2B_{N+1}^*+1}\left(\left(1+t\right)
    ^{2B_{N+1}^*+1} 
    -1\right)\right). 
\end{align*} 
We then define $T^\varepsilon$ by the maximal time satisfying  

\begin{align*}
    C_2\varepsilon^2\exp\left(\frac{C_6}
    {2B_{N+1}^*+1}\left(\left(1+t\right)
    ^{2B_{N+1}^*+1} 
    -1\right)\right) \leq \frac{\tilde\delta ^2}{2}. 
\end{align*} 
From the monotonicity of the exponential and the smallness of $\varepsilon$ imposed by (\ref{Eqn:smallEpsilon}), $T^\varepsilon$ is determined by  

\begin{align}
    &\exp\left(\frac{C_6}
    {2B_{N+1}^*+1}\left(\left(1+T^\varepsilon
    \right)^{2B_{N+1}^*+1} 
    -1\right)\right) = \frac{\tilde\delta ^2}{2C_2\varepsilon^2}, 
    \nonumber\\
    &\frac{C_6}{2B_{N+1}^*+1}
    \left(\left(1+T^\varepsilon\right)^{2B_{N+1}^*+1} 
    -1\right)= \log\left(\frac{\tilde\delta ^2}
    {2C_2\varepsilon^2} \right),  
    \nonumber\\
    &T^\varepsilon=\left(\frac{2B_{N+1}^*+1}{C_6}
    \log\left(\frac{\tilde\delta ^2}
    {2C_2\varepsilon^2} 
    \right)+1\right)^\frac{1}{2B_{N+1}^*+1}-1.  
    \label{Eqn:T_epsilon_Explicit}
\end{align} 
Then, using coercivity and Sobolev embedding, we note that 
\begin{align*}
    \left\lVert\sigma(t)\right\rVert_{L^\infty}
    \leq C_8\mathcal E_N(t)^{1/2}
    \leq C_8\tilde\delta
    \leq\frac{C_0}{2}.
\end{align*}
Additionally, the background satisfies 
\begin{align*}
    \left\lVert \bar h(t) \right\rVert 
    _{L^\infty}\leq 4C_0. 
\end{align*}
Thus, $\left\lVert H\right\rVert_{L^\infty([0,T]
\times\mathbb{R}^3)}\leq\frac{9}{2}C_0$, improving on (\ref{Eqn:EnergyBoot}). This completes the proof of the lower bound on $T^\varepsilon$: 

\begin{align}
    T^\varepsilon \gtrsim \left(\log \frac{1}
    {\varepsilon}\right)^{\mathcal{C}_N}, 
    \label{Eqn:T_epsilon_final}
\end{align} 
where 

\begin{align*}
    \mathcal{C}_N := \frac{1}{2B^*_{N+1}+1}. 
\end{align*}
It remains to prove that the solution exists until $T^\varepsilon$. Using (\ref{Eqn:EnergyBoot}), we note that 
\begin{align*}
    \lVert\mathcal W(t)\rVert_{H^N}
    \leq C_7\mathcal E_N(t)^{1/2}
    \leq C_7\tilde\delta, 
\end{align*} 
for some $C_7>0$. Furthermore, define the quantities 
\begin{align}
    M&:=C_7\tilde\delta, 
    \label{Eqn:M}
    \\
    K_\varepsilon&:=\sup_{0\leq t\leq T^\varepsilon+1}\left\lVert 
    \mathcal{B}(t,\cdot) \right\rVert 
    _{C_b^{N+1}(\mathbb{R}^3)}. 
    \label{Eqn:K} 
\end{align} 
We then note that \cite{Kato1975} implies the local existence of a solution to (\ref{Eqn:Perturbed1})--(\ref{Eqn:Perturbed3}) whose lifespan is bounded below in terms of $M,K_\varepsilon$, and the uniform coercivity bound of the symmetrizer. Let $\tau_\varepsilon>0$ denote a uniform positive lower bound for the local lifespan supplied by \cite{Kato1975} corresponding to the bounds (\ref{Eqn:LambdaCoercivity}), (\ref{Eqn:M})--(\ref{Eqn:K}). Starting from $t=0$, we apply the local existence theorem supplied by \cite{Kato1975} to (\ref{Eqn:Perturbed1})--(\ref{Eqn:Perturbed3}) repeatedly over intervals of size $\tau_\varepsilon$. The preceding bootstrap estimates allow this process to be iterated for all $t<T^\varepsilon$. We deduce that $T^\varepsilon_{\mathrm{life}} \geq T^\varepsilon$. Therefore, 

\begin{align*}
    T^\varepsilon_{\mathrm{life}} \geq  
    T^\varepsilon \gtrsim \left(\log \frac{1}
    {\varepsilon}\right)^{\mathcal{C}_N}. 
\end{align*}  
Finally, since $N>k+3/2$, Sobolev
embedding and (\ref{Eqn:FinalSobolevBound}) imply 
\begin{align*}
    &\lVert \bfU-\bar\bfu\rVert_{W^{k,\infty}}
    +\lVert H-\bar h\rVert_{W^{k,\infty}}
    +\lVert\theta\rVert_{W^{k,\infty}}
    \leq C_8\mathcal{E}_N(t)^{1/2}
    \leq C_8\tilde\delta 
    \leq\frac{\delta}{2}<\delta.
\end{align*}
This completes the proof of Theorem \ref{Thm:PerturbedSoln}.

\begin{appendices}

\section{Algebraic Identities for Higher Radial Derivatives}
\label{App:HigherRadialAlgebra}

\subsection{Derivation of the characteristic identities}
\label{App:CharacteristicIdentities}

For $j\geq2$, we provide the algebraic details leading to the identities

\begin{align*}
    \partial_1(\partial_r^j z)=
    -I_j^z\partial_r^j z+\Lambda_j^z+Q_j^z+S_j^z
\end{align*}
and

\begin{align*}
    \partial_2(\partial_r^j w)=
    -I_j^w\partial_r^j w+\Lambda_j^w+Q_j^w+S_j^w.
\end{align*}
We begin by inserting (\ref{Eqn:Commutator_Relation}) into (\ref{Eqn:partial_1z})--(\ref{Eqn:partial_2w}).   

\begin{align*}
    &\partial_1\left(\partial_r^jz\right)= 
    \frac{\gamma-1}{4}\partial_r^j\left( 
    r^{-1}(w^2-z^2)\right)-\sum_{k=0}^{j-1} 
    \binom{j}{k}\partial_r^{j-k}c_1
    \partial_r^{k+1}z 
    \nonumber\\
    &= 
    -\frac{\gamma-1}{4}\partial_r^j(r^{-1}z^2) 
    +\frac{\gamma-1}{4}\partial_r^j\left( 
    r^{-1}w^2\right)-\sum_{k=0}^{j-1} 
    \binom{j}{k}\partial_r^{j-k}c_1
    \partial_r^{k+1}z 
    \nonumber\\
    &=-\frac{\gamma-1}{4}\sum_{k=0}^j 
    \binom{j}{k}\partial_r^{j-k}(r^{-1}) 
    \partial_r^k(z^2)+\frac{\gamma-1}{4} 
    \partial_r^j(r^{-1}w^2)-\sum_{k=0}^{j-1} 
    \binom{j}{k}\partial_r^{j-k}c_1 
    \partial_r^{k+1}z . 
\end{align*} 
We next examine each piece individually and extract the highest order derivative in $z$. We begin with the first term, 

\begin{align*}
    &\sum_{k=0}^j 
    \binom{j}{k}\partial_r^{j-k}(r^{-1}) 
    \partial_r^k (z^2)=
    r^{-1}\sum_{k=0}^j\binom{j}{k}\partial_r^kz
    \partial_r^{j-k}z+
    \sum_{k=0}^{j-1}\binom{j}{k} 
    \partial_r^{j-k}(r^{-1})\partial_r^k(z^2)
    \nonumber\\
    &=2r^{-1}z\partial^j_rz+r^{-1}\sum_{k=1}^{j-1} 
    \binom{j}{k}\partial_r^kz\partial_r^{j-k}z 
    +\sum_{k=0}^{j-1}\binom{j}{k}\partial_r^{j-k}
    (r^{-1})\partial_r^k (z^2) 
    \nonumber\\
    &=2r^{-1}z\partial_r^jz+jr^{-1}\partial_rz 
    \partial_r^{j-1}z+z^2\partial_r^j(r^{-1}) 
    +j\partial_r(r^{-1})\partial_r^{j-1}(z^2) 
    \nonumber\\
    &+\sum_{k=1}^{j-2}\binom{j}{k}\left(r^{-1} 
    \partial_r^kz\partial_r^{j-k}z 
    +\partial_r^k(z^2)\partial_r^{j-k}(r^{-1})
    \right). 
\end{align*} 
Then the third term, 

\begin{align*}
    &\sum_{k=0}^{j-1}\binom{j}{k}\partial_r^{j-k}
    c_1\partial_r^{k+1}z=\partial_rz
    \partial_r^j\left(\frac{3-\gamma}{4}w
    +\frac{\gamma+1}{4}z\right)+\sum_{k=1}^{j-1} 
    \binom{j}{k}\partial_r^{j-k}c_1
    \partial_r^{k+1}z 
    \nonumber\\
    &=\frac{\gamma+1}{4}\partial_rz\partial_r^jz 
    +\frac{3-\gamma}{4}\partial_rz\partial_r^jw 
    +\sum_{k=1}^{j-1}\binom{j}{k}\partial_r^{j-k}
    c_1\partial_r^{k+1}z 
    \nonumber\\
    &=\frac{\gamma+1}{4}\partial_rz\partial_r^jz
    +j\partial_rc_1\partial_r^jz
    +\frac{3-\gamma}{4}\partial_rz\partial_r^jw 
    +\sum_{k=1}^{j-2}\binom{j}{k}\partial_r^{j-k}
    c_1\partial_r^{k+1}z 
    \nonumber\\
    &=\partial_r^jz\left(\frac{\gamma+1}{4}(j+1)
    \partial_rz+\frac{3-\gamma}{4}
    j\partial_rw\right)+\frac{3-\gamma}{4} 
    \partial_rz\partial_r^jw 
    +\sum_{k=1}^{j-2}\binom{j}{k}
    \partial_r^{j-k}c_1 \partial_r^{k+1}z, 
\end{align*} 
which, after utilizing (\ref{Eqn:S_j^z})--(\ref{Eqn:I_j^z}), reduces to 

\begin{align*}
    \partial_1(\partial_r^jz)=-I^z_j\partial_r^jz 
     +\Lambda^z_j+Q^z_j+S^z_j . 
\end{align*} 
The identity for $w$ follows from the symmetry of (\ref{Eqn:partial_1z})--(\ref{Eqn:partial_2w}) under the substitutions 

\begin{align*}
    z&\leftrightarrow w, 
    \\
    c_1&\leftrightarrow c_2,  
    \\
    \partial_1&\leftrightarrow\partial_2,  
    \\
    \psi&\leftrightarrow \eta . 
\end{align*} 
Therefore, 

\begin{align*}
    \partial_2(\partial_r^jw)=-I^w_j\partial_r^jw 
     +\Lambda^w_j+Q^w_j+S^w_j , 
\end{align*} 
where the definitions of $I_j^w,\Lambda_j^w,Q_j^w,S_j^w$ in (\ref{Eqn:S_j^w})--(\ref{Eqn:I_j^w}) precisely follow those of (\ref{Eqn:S_j^z})--(\ref{Eqn:I_j^z}) under the above symmetries. This completes the derivation.

\subsection{Proof of the \(Q_j\) bound}
\label{App:QjBound}

Due to the symmetric nature of the isentropic system, it suffices to determine a bound for $Q_j^w$, which by symmetry will hold for $Q_j^z$. We proceed term by term, 

\begin{align}
    &\left\lvert\frac{jw_r\partial_r^{j-1}w}{r}
    \right\rvert\leq j\mathcal{M}_2(t)
    \mathcal{M}_{j-1}(t) , 
    \label{Eqn:Qj2}
    \\
    &|w^2\partial_r^j(r^{-1})|=\left|w^2\frac{j!}{r^{j+1}} 
    \right|\leq \frac{j}{2}\mathcal{M}_2(t) 
    \mathcal{M}_{j-1}(t) , 
    \label{Eqn:Qj1}
\end{align} 
and,  

\begin{align*}
    &\left|\frac{j\partial_r^{j-1}(w^2)}{r^2} \right| 
    \leq j\sum_{k=0}^{j-1}\binom{j-1}{k} 
    \frac{\left|\partial_r^kw\partial_r^{j-k-1}w
    \right|}{r^2}. 
\end{align*} 
We first bound the endpoints, 

\begin{align*}
    &\left|\frac{w\partial_r^{j-1}w}{r^2}\right| 
    \leq \frac{1}{2}\mathcal{M}_2(t)
    \mathcal{M}_{j-1}(t) , 
\end{align*} 
then the interior,  

\begin{align*}
    \left|\frac{\partial_r^kw\partial_r^{j-k-1}w}
    {r^2}\right|\leq\mathcal{M}_{k+1}(t) 
    \mathcal{M}_{j-k}(t) . 
\end{align*} 
This gives us  

\begin{align}
    \left|\frac{j\partial_r^{j-1}(w^2)}{r^2} \right| 
    \leq j\mathcal{M}_2(t)\mathcal{M}_{j-1}(t) 
    +j\sum_{k=1}^{j-2}\binom{j-1}{k} 
    \mathcal{M}_{k+1}(t)\mathcal{M}_{j-k}(t) . 
    \label{Eqn:Qj3}
\end{align} 
We then bound the first summation. 

\begin{align*}
    &\left|r^{-1}\sum_{k=1}^{j-1}\binom{j}{k} 
    \partial_r^{j-k}z\partial_r^kz\right| 
    =\left|\frac{2jz_r\partial_r^{j-1}z}{r} 
    +\frac{1}{r}\sum_{k=2}^{j-2}\binom{j}{k}
    \partial_r^{j-k}z\partial_r^k z\right| 
    \\
    &\leq 2j\mathcal{M}_2(t)\mathcal{M}_{j-1}(t) 
    +\sum_{k=2}^{j-2}\binom{j}{k}\mathcal{M}_{j-k}(t) 
    \mathcal{M}_{k+1}(t) . 
\end{align*} 
It remains to estimate the second sum appearing in $Q_j^w$. We define 

\begin{align*}
    T_j^w:&=\sum_{k=0}^{j-1}\binom{j}{k} 
    \partial_r^{j-k}(r^{-1})\partial_r^k(z^2) 
    \\
    &=\sum_{k=0}^{j-1}\binom{j}{k}(-1)^{j-k} 
    (j-k)!r^{-(j-k+1)}\partial_r^k (z^2) , 
\end{align*} 
which is bounded by 

\begin{align*}
    |T_j^w|\leq \sum_{k=0}^{j-1}\binom{j}{k} 
    (j-k)!\sum_{l=0}^k\binom{k}{l}\left| 
    \frac{\partial_r^lz\partial_r^{k-l}z}{r^{j-k+1}}
    \right|. 
\end{align*} 
As before, we split into end cases. 

\paragraph{Case 1: $l=0$} 

We have  

\begin{align*}
    \left|\frac{z\partial_r^kz}{r^{j-k+1}}\right| 
    &=\left|\frac{z}{r^2}\right|\left|
    \frac{\partial_r^kz}{r^{j-k-1}}\right| 
    \leq \frac{1}{2(j-k-1)!}\mathcal{M}_2(t) 
    \mathcal{M}_{j-1}(t) , 
\end{align*} 
giving us  

\begin{align*}
    \binom{j}{k}(j-k)!\left|\frac{z\partial_r^kz}
    {r^{j-k+1}}\right|\leq\binom{j}{k}\frac{j-k}{2} 
    \mathcal{M}_2(t)\mathcal{M}_{j-1}(t) . 
\end{align*}

\paragraph{Case 2: $l=1$} 

We first observe that, for $k\geq 1$,  

\begin{align*}
    \left|\frac{z_r\partial_r^{k-1}z}{r^{j-k+1}} 
    \right|=\left|\frac{z_r}{r}\right|\left| 
    \frac{\partial_r^{k-1}z}{r^{j-k}}\right| 
    \leq \frac{1}{(j-k)!}\mathcal{M}_2(t) 
    \mathcal{M}_{j-1}(t) . 
\end{align*} 
This gives us  

\begin{align*}
    \binom{j}{k}(j-k)!\binom{k}{1} 
    \left|\frac{z_r\partial_r^{k-1}z}{r^{j-k+1}} 
    \right|\leq k\binom{j}{k}\mathcal{M}_2(t) 
    \mathcal{M}_{j-1}(t).  
\end{align*} 

\paragraph{Case 3: $2\leq l\leq k$} 

We then examine  

\begin{align*}
    \left|\frac{\partial_r^lz\partial_r^{k-l}z}
    {r^{j-k+1}}\right|&=|\partial_r^lz|\left|
    \frac{\partial_r^{k-l}z}{r^{j-k+1}}\right| 
    \\
    &\leq\frac{1}{(j-k+1)!}\mathcal{M}_l(t) 
    \mathcal{M}_{j+1-l}(t),  
\end{align*} 
which gives us   

\begin{align*}
    \binom{j}{k}(j-k)!\binom{k}{l}\left| 
    \frac{\partial_r^lz\partial_r^{k-l}z}
    {r^{j-k+1}}\right|\leq\binom{j}{k}
    \binom{k}{l}\frac{1}{(j-k+1)}\mathcal{M}_l(t) 
    \mathcal{M}_{j+1-l}(t). 
\end{align*} 
We now put all of these estimates together in order to bound $|T_j|$. We first note the combinatorial relations, 

\begin{align*}
    &\frac{1}{2}\sum_{k=0}^{j-1}\binom{j}{k}(j-k) 
    =j\cdot 2^{j-2}, 
    \\
    &\sum_{k=1}^{j-1}k\binom{j}{k}=j(2^{j-1}-1). 
\end{align*} 
This gives us  

\begin{align}
    |T_j^w|\leq j(3\cdot2^{j-2}-1)\mathcal{M}_2(t) 
    \mathcal{M}_{j-1}(t)+\sum_{k=2}^{j-1} 
    \sum_{l=2}^k\frac{\binom{j}{k}
    \binom{k}{l}}{j-k+1}\mathcal{M}_l(t) 
    \mathcal{M}_{j+1-l}(t) . 
    \label{Eqn:T_j_Bounds}
\end{align} 
Combining (\ref{Eqn:Qj2})--(\ref{Eqn:T_j_Bounds}) allows us to bound $Q_j^w(t)$---which, by symmetry, also holds for $Q_j^z(t)$. 

\begin{align*}
    Q_j(t)&\leq j\mathcal{M}_2(t)\mathcal{M}_{j-1}(t) 
    +\frac{j}{2}\mathcal{M}_2(t) 
    \mathcal{M}_{j-1}(t) 
    \nonumber\\
    &+j\mathcal{M}_2(t)\mathcal{M}_{j-1}(t) 
    +j\sum_{k=1}^{j-2}\binom{j-1}{k} 
    \mathcal{M}_{k+1}(t)\mathcal{M}_{j-k}(t) 
    \nonumber\\
    &+2j\mathcal{M}_2(t)\mathcal{M}_{j-1}(t) 
    +\sum_{k=2}^{j-2}\binom{j}{k}
    \mathcal{M}_{j-k}(t)\mathcal{M}_{k+1}(t) 
    \nonumber\\
    &+j(3\cdot2^{j-2}-1)\mathcal{M}_2(t)
    \mathcal{M}_{j-1}(t)
    +\sum_{k=2}^{j-1}\sum_{l=2}^k
    \frac{\binom{j}{k}\binom{k}{l}}{j-k+1}
    \mathcal{M}_l(t)\mathcal{M}_{j+1-l}(t). 
\end{align*} 
Collecting terms gives 

\begin{align*}
    Q_j(t)&\leq j\left(3\cdot2^{j-2}+4\right) 
    \mathcal{M}_2(t)\mathcal{M}_{j-1}(t) 
    +j\sum_{k=1}^{j-2}\binom{j-1}{k} 
    \mathcal{M}_{k+1}(t)\mathcal{M}_{j-k}(t) 
    \nonumber\\
    &+\sum_{k=2}^{j-2}\binom{j}{k}
    \mathcal{M}_{j-k}(t)\mathcal{M}_{k+1}(t) 
    +\sum_{k=2}^{j-1}\sum_{l=2}^k
    \frac{\binom{j}{k}\binom{k}{l}}{j-k+1}
    \mathcal{M}_l(t)\mathcal{M}_{j+1-l}(t), 
\end{align*} 
which can be written more cleanly as 

\begin{align*}
    Q_j(t)&\leq j\left(3\cdot2^{j-2}+3+j\right) 
    \mathcal{M}_2(t)\mathcal{M}_{j-1}(t) 
    \nonumber\\
    &+\sum_{k=2}^{j-2}(j-k+1)\binom{j}{k} 
    \mathcal{M}_{k+1}(t)\mathcal{M}_{j-k}(t) 
    \nonumber\\
    &+\sum_{k=2}^{j-1}\sum_{l=2}^k
    \frac{\binom{j}{k}\binom{k}{l}}{j-k+1}
    \mathcal{M}_l(t)\mathcal{M}_{j+1-l}(t). 
\end{align*} 
Utilizing our definition of $\mathcal{L}_j$ gives us the following bound, 

\begin{align*}
    &Q_j(t)\leq D_j^1\mathcal{L}_{j-1}(t)^2, 
\end{align*} 
for $j\geq 3$, where $D_j^1$ is some constant depending only on $j$. 

\subsection{Proof of the $S_j$ Bound}
\label{App:SjBound}

We proceed term by term. For the first term in (\ref{Eqn:S_j^w}), we have 

\begin{align}
    \left|\sum_{k=1}^{j-2}\binom{j}{k} 
    r^{-1}\partial_r^kw\partial_r^{j-k}w\right|
    \leq \sum_{k=1}^{j-2}\binom{j}{k} 
    \mathcal{M}_{k+1}(t)\mathcal{M}_{j-k}(t) . 
    \label{Eqn:Sj1}
\end{align} 
Mimicking the derivation for Equation (\ref{Eqn:T_j_Bounds}) bounds the second term,  

\begin{align}
    &\left|\sum_{k=1}^{j-2}\binom{j}{k} 
    \partial_r^k(w^2)\partial_r^{j-k}(r^{-1})\right| 
    \leq \sum_{k=1}^{j-2}\binom{j}{k}(j-k)! 
    \sum_{l=0}^k\binom{k}{l}\left| 
    \frac{\partial_r^lw\partial_r^{k-l}w}
    {r^{j-k+1}}\right| \nonumber
    \\
    &\leq j(3\cdot2^{j-2}-j-1)\mathcal{M}_2(t) 
    \mathcal{M}_{j-1}(t)+\sum_{k=2}^{j-2} 
    \sum_{l=2}^k \frac{\binom{j}{k}\binom{k}{l}}
    {j-k+1}\mathcal{M}_l(t)\mathcal{M}_{j+1-l}(t).  
    \label{Eqn:Sj2}
\end{align} 
We then examine the third term and note that $c_2$ is a linear combination of $w,z$ with positive weights summing to $1$. Therefore,  

\begin{align}
    \left|\sum_{k=1}^{j-2}\binom{j}{k} 
    \partial_r^{k+1}w\partial_r^{j-k}c_2\right| 
    \leq\sum_{k=1}^{j-2}\binom{j}{k} 
    \mathcal{M}_{k+1}(t)\mathcal{M}_{j-k}(t).  
    \label{Eqn:Sj3}
\end{align} 
Summing (\ref{Eqn:Sj1})--(\ref{Eqn:Sj3}) and noting the symmetry of the isentropic system gives us 

\begin{align}
    &S_j(t)\leq j(3\cdot2^{j-2}-j-1)\mathcal{M}_2(t) 
    \mathcal{M}_{j-1}(t)+\sum_{k=2}^{j-2} 
    \sum_{l=2}^k \frac{\binom{j}{k}\binom{k}{l}}
    {j-k+1}\mathcal{M}_l(t)\mathcal{M}_{j+1-l}(t) 
    \nonumber\\
    &+\sum_{k=1}^{j-2}\binom{j}{k}
    \mathcal{M}_{k+1}(t)\mathcal{M}_{j-k}(t) 
    +\sum_{k=1}^{j-2}\binom{j}{k}
    \mathcal{M}_{k+1}(t)\mathcal{M}_{j-k}(t),  
    \label{Eqn:S_j_Bound}
\end{align} 
which can then be simplified as 

\begin{align*}
    S_j(t)\leq D_j^2 \mathcal{L}_{j-1}(t)^2, 
\end{align*} 
for $j\geq 3$, where $D_j^2$ is some constant depending only on $j$. 

\section{Radial-to-Cartesian Derivative Estimates}
\label{App:RadialCartesian}

\subsection{Scalar radial functions}
\label{App:ScalarRadial}

\begin{lemma}\label{Lemma:CartRadScalar}
    Let $|\alpha|=k\geq1$, and let $f(x)=F(r)\in\mathbb{R}$ where $r=|x|$. Then for all $x\neq0$, 

\begin{align}
    \partial^\alpha f(x) = \sum_{j=1}^k 
    \frac{F^{(j)}(r)}{r^{k-j}}A_{\alpha,j}\left(
    \frac{x}{r}\right), 
    \label{Eqn:CartRadScalar}
\end{align} 
where the $A_{\alpha,j}$ are smooth and bounded with respect to their argument. 
\end{lemma}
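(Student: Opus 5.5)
We argue by induction on $k=|\alpha|$, and we track the angular coefficients as homogeneous polynomials in $\omega:=x/r$. This gives the smoothness and boundedness claim directly on the unit sphere. The inductive statement will be slightly stronger than (\ref{Eqn:CartRadScalar}): each $A_{\alpha,j}$ is a polynomial in $\omega=(\omega_1,\omega_2,\omega_3)$ with coefficients depending only on $\alpha$ and $j$. Since $|\omega|=1$, such polynomials are automatically smooth and bounded in their argument.

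For the base case $k=1$, with $\alpha=e_i$, the chain rule gives $\partial_i f(x)=F'(r)\,x_i/r$. So $A_{e_i,1}(\omega)=\omega_i$, which has the required form with no negative power of $r$.

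For the inductive step, assume the formula holds for some $\alpha$ with $|\alpha|=k$, and apply $\partial_i$ to each term $F^{(j)}(r)\,r^{-(k-j)}A_{\alpha,j}(\omega)$. We use three elementary identities, all valid for $x\neq0$:
\begin{align*}
\partial_i F^{(j)}(r)=F^{(j+1)}(r)\,\omega_i,\qquad
\partial_i r^{-m}=-m\,r^{-m-1}\omega_i,\qquad
\partial_i \omega_l=\frac{\delta_{il}-\omega_i\omega_l}{r}.
\end{align*}
The chain rule then gives $\partial_i\big(A(\omega)\big)=r^{-1}\sum_l (\partial_l A)(\omega)(\delta_{il}-\omega_i\omega_l)$, which is $r^{-1}$ times a polynomial in $\omega$. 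Differentiating the term in question therefore produces three kinds of pieces:
\begin{itemize}
\item a piece $F^{(j+1)}r^{-(k-j)}\omega_iA_{\alpha,j}$, which has index $j+1$ and power $r^{-((k+1)-(j+1))}$;
\item a piece $-(k-j)F^{(j)}r^{-(k-j)-1}\omega_iA_{\alpha,j}$, which has index $j$ and power $r^{-((k+1)-j)}$;
\item a piece $F^{(j)}r^{-(k-j)-1}\,\tilde A(\omega)$, with $\tilde A$ again a polynomial.
\end{itemize}
In every case the exponent of $r$ equals $(k+1)$ minus the order of the derivative of $F$. Collecting terms by $j\in\{1,\dots,k+1\}$ gives the formula for $\alpha+e_i$, with polynomial coefficients. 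Note that $j$ never drops to $0$: every term carries at least one derivative of $F$, because each differentiation either falls on $F^{(j)}$, raising $j$, or falls on the other factors, leaving $j$ unchanged.

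The only point requiring care is the bookkeeping of the powers of $r$, specifically checking that the $\omega$-derivative pieces also lose exactly one power of $r$. This follows from the homogeneity of degree $-1$ of $\partial_i\omega$. No step is a genuine obstacle. If one prefers, the claimed smoothness of $A_{\alpha,j}$ as a function on $\mathbb{R}^3\setminus\{0\}$ can instead be seen from its degree-$0$ homogeneity, but the polynomial description suffices.
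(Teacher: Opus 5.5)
Your proof is correct and follows the same route as the paper's: induction on $|\alpha|$, with the same three kinds of terms coming from differentiating $F^{(j)}(r)$, $r^{-(k-j)}$, and $A_{\alpha,j}(\omega)$ via $\partial_l\omega_i=(\delta_{il}-\omega_i\omega_l)/r$, then collected by the order of the derivative of $F$. Your extra observation that the $A_{\alpha,j}$ can be taken to be polynomials in $\omega$ is a small improvement, because it makes explicit why $\partial A_{\alpha,j}/\partial\omega_i$ (which enters the paper's $B_{\alpha,j,l}$) remains bounded, a point the paper leaves implicit.
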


\begin{proof}
    We first write our radial unit vector component-wise,  

\begin{align*}
    \omega:=\frac{x}{r}=\left[\frac{x_1}{r},
    \frac{x_2}{r},\frac{x_3}{r}\right], 
\end{align*} 
and define the following notation, 

\begin{align*}
    \partial_l:=\frac{\partial}{\partial x_l}. 
\end{align*}
This gives us  

\begin{align*}
    \partial_l r= \omega_l,  
\end{align*} 
and its partial derivative componentwise,  

\begin{align*}
    &\partial_l\omega_i=\frac{\delta_{il}
    -\omega_i\omega_l}{r}. 
\end{align*}
We proceed by induction. The base case is trivial. Indeed, 

\begin{align*}
    \partial_l f(x) = F'(r)\frac{x_l}{r}. 
\end{align*} 
Then, suppose by way of induction that (\ref{Eqn:CartRadScalar}) holds for all $|\alpha|\leq k$.  

\begin{align*}
    \partial^\alpha f(x) = \sum_{j=1}^k 
    \frac{F^{(j)}(r)}{r^{k-j}}A_{\alpha,j}\left(
    \frac{x}{r}\right). 
\end{align*} 
We then differentiate once more, 

\begin{align*}
    \partial_l\partial^\alpha f(x) &= 
    \sum_{j=1}^k
    \biggl((j-k)\frac{F^{(j)}(r)}{r^{(k+1)-j}} 
    \frac{x_l}{r}
    A_{\alpha,j}\left(\frac{x}{r}\right) 
    +\frac{F^{(j+1)}(r)}{r^{(k+1)-(j+1)}}\frac{x_l}{r}A_{\alpha,j} 
    \left(\frac{x}{r} \right) 
    \nonumber\\
    &+\frac{F^{(j)}(r)}{r^{k+1-j}}
    B_{\alpha,j,l}(\omega) \biggr), 
\end{align*}  
where 

\begin{align*}
    B_{\alpha,j,l}(\omega):=\sum_{i=1}^3 
    \frac{\partial A_{\alpha,j}}{\partial\omega_i} 
    (\omega)(\delta_{il}-\omega_i\omega_l) . 
\end{align*} 
Since $|x_l/r|\leq 1$, they can be absorbed into the $A_{\alpha,j}$---as can $B_{\alpha,j,l}$. This gives the desired form and completes the proof.
\end{proof}

\subsection{Vector-valued radial functions}
\label{App:VectorRadial}

\begin{lemma}\label{Lemma:CartRadVector}
    Let $|\alpha|=k\geq0$, and let 
    $G(x)=g(r)\omega\in\mathbb{R}^3$. Then for all $x\neq0$, 

\begin{align}
    \partial^\alpha G_i(x) 
    =\sum_{j=0}^k \frac{\partial_r^jg(r)}
    {r^{k-j}}A_{\alpha,i,j}(\omega), 
    \label{Eqn:CartRadVector}
\end{align} 
where the $A_{\alpha,i,j}$ are smooth and bounded with respect to their argument. 
\end{lemma}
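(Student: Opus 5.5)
The plan is to reduce Lemma \ref{Lemma:CartRadVector} to the scalar case, Lemma \ref{Lemma:CartRadScalar}, together with the Leibniz rule applied to the product $G_i(x)=g(r)\,\omega_i$. The scalar lemma gives the structure of derivatives of the radial factor $g(r)$. What remains is to understand the derivatives of the angular factor $\omega_i=x_i/r$, which is homogeneous of degree zero and smooth away from the origin.

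\textbf{Step 1 (derivatives of $\omega$).} First show, by induction on $|\beta|$, that for every multi-index $\beta$
\begin{align*}
    \partial^\beta\omega_i=\frac{P_{\beta,i}(\omega)}{r^{|\beta|}},
\end{align*}
where $P_{\beta,i}$ is a polynomial, hence smooth and bounded on the unit sphere. The base case $\beta=0$ is trivial. For the inductive step, differentiate using the identities from the proof of Lemma \ref{Lemma:CartRadScalar}, namely $\partial_l r=\omega_l$ and $\partial_l\omega_m=(\delta_{ml}-\omega_m\omega_l)/r$. Differentiating $r^{-|\beta|}$ produces $-|\beta|\,\omega_l\,r^{-|\beta|-1}$, and differentiating $P_{\beta,i}(\omega)$ produces a polynomial in $\omega$ times $r^{-1}$. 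Both contributions have the form (polynomial in $\omega$)$\,\cdot\, r^{-|\beta|-1}$, as required.

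\textbf{Step 2 (Leibniz rule).} For $x\neq0$, write
\begin{align*}
    \partial^\alpha G_i=\sum_{\beta\leq\alpha}\binom{\alpha}{\beta}\,\partial^{\alpha-\beta}\big(g(r)\big)\,\partial^\beta\omega_i .
\end{align*}
The terms are handled according to the order of the derivative falling on $g$.
\begin{itemize}
    \item When $\beta=\alpha$, the term is $g(r)\,P_{\alpha,i}(\omega)\,r^{-k}$. This is precisely the $j=0$ term of \eqref{Eqn:CartRadVector}, and it explains why the sum starts at $j=0$, unlike the scalar lemma.
    \item When $|\alpha-\beta|=m\geq1$, Lemma \ref{Lemma:CartRadScalar} gives $\partial^{\alpha-\beta}g(r)=\sum_{j=1}^m g^{(j)}(r)\,r^{-(m-j)}A_{\alpha-\beta,j}(\omega)$. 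Multiplying by $P_{\beta,i}(\omega)\,r^{-(k-m)}$ yields terms $g^{(j)}(r)\,r^{-(k-j)}$ times a smooth bounded function of $\omega$.
\end{itemize}

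\textbf{Step 3 (collecting terms).} Group the contributions by $j$. Each $A_{\alpha,i,j}$ is then a finite sum of products of binomial coefficients with the bounded smooth functions $A_{\alpha-\beta,j}$ and $P_{\beta,i}$, so it is itself smooth and bounded on the sphere. This gives \eqref{Eqn:CartRadVector}, with the case $k=0$ being trivial.

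\textbf{Main obstacle.} The only substantive point is the bookkeeping of powers of $r$: every term must have total homogeneity $r^{j-k}$ relative to $g^{(j)}$. This holds because derivatives of $\omega$ cost exactly one power of $r$ each, and Step 1 is designed precisely to encode that fact.
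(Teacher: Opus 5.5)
Your proof is correct, but it is organized differently from the paper's. The paper never invokes Lemma~\ref{Lemma:CartRadScalar}. It runs a single induction on $k=|\alpha|$ directly on the ansatz \eqref{Eqn:CartRadVector}, starting from the base case $G_i=g(r)\omega_i$. In the inductive step it differentiates each term $\partial_r^j g\, r^{-(k-j)}A_{\alpha,i,j}(\omega)$ using $\partial_l r=\omega_l$ and $\partial_l\omega_m=(\delta_{ml}-\omega_m\omega_l)/r$. This produces terms of the form $\partial_r^{j+1}g\,r^{-(k-j)}$ and $\partial_r^{j}g\,r^{-(k+1-j)}$, and these are then collected and re-indexed. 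In other words, it repeats the scalar argument verbatim, with the $j=0$ term carried along from the base case. You instead write $G_i$ as a product and apply the Leibniz rule, reusing the scalar lemma as a black box. The only new input is then your Step 1: each derivative of the degree-zero factor $\omega_i$ costs exactly one power of $r$. Step 1 is itself just the special case $g\equiv 1$ of the lemma, so your argument amounts to ``constant-$g$ case plus scalar lemma via Leibniz,'' whereas the paper treats general $g$ in one pass. Your version buys transparency and explicit coefficients. You see that $A_{\alpha,i,0}(\omega)=r^k\partial^\alpha\omega_i=P_{\alpha,i}(\omega)$ arises solely from all derivatives landing on $\omega_i$. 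For $j\geq1$ you get $A_{\alpha,i,j}=\sum\binom{\alpha}{\beta}A_{\alpha-\beta,j}P_{\beta,i}$, summed over $\beta\leq\alpha$ with $|\alpha-\beta|\geq j$. The same argument also extends without change to $g(r)\Phi(\omega)$ for any smooth angular factor $\Phi$. The paper's version is shorter and self-contained, but it leaves the bookkeeping of the collected terms to the reader.
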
 

\begin{proof}
    Here, the base case starts at $k=0$. Indeed, we have trivially, 

    \begin{align*}
        G_i(x)=g(r)\omega_i. 
    \end{align*} 
    We then proceed to the inductive case. Suppose that the following holds for $|\alpha| = k$, 
    
    \begin{align*}
        \partial^\alpha G_i(x) 
        =\sum_{j=0}^k \frac{\partial_r^jg(r)}
        {r^{k-j}}A_{\alpha,i,j}(\omega). 
    \end{align*} 
    Differentiating once more gives 

    \begin{align*}
    &\partial_l\partial^\alpha G_i(x) 
    =\sum_{j=0}^k \partial_l\left(\frac{\partial_r^jg(r)}
    {r^{k-j}}A_{\alpha,i,j}(\omega)\right) 
    \\
    &=\sum_{j=0}^k
    \biggl((j-k)\frac{\partial_r^j g(r)}
    {r^{(k+1)-j}}\frac{x_l}{r}
    A_{\alpha,i,j}\left(\frac{x}{r}\right) 
    +\frac{\partial_r^{j+1} g(r)}{r^{(k+1)-(j+1)}}\frac{x_l}{r}A_{\alpha,i,j} 
    \left(\frac{x}{r} \right) 
    \nonumber\\
    &+\frac{\partial_r^j g(r)}{r^{k+1-j}}
    B_{\alpha,i,j,l}(\omega) \biggr). 
    \end{align*} 
    As in the proof of Lemma \ref{Lemma:CartRadScalar}, terms can be appropriately collected and re-indexed to be of the form in (\ref{Eqn:CartRadVector}). This completes the proof. 
    
\end{proof} 

We note that while the formulas in Appendix \ref{App:RadialCartesian} are stated for $x\neq0$, the corresponding $L^\infty$ estimates extend to the origin by smoothness.

\section{Commutator Relations}
\label{App:Comm} 

\subsection{Norms of Commutators}
\label{App:CommExpanded} 

The precise expansions of the $\mathcal{J}_i^\alpha$ are given below. 

\begin{align*}
    &\mathcal{J}_1^\alpha=\partial^\alpha \bar\bfu\cdot\nabla\sigma
    +\sum_{0<\beta<\alpha}\binom{\alpha}{\beta} 
    \partial^\beta \bfU\cdot\nabla
    \partial^{\alpha-\beta}\sigma, 
    \\
    &\mathcal{J}_2^\alpha=\partial^\alpha \bar h\nabla\cdot\bfv
    +\sum_{0<\beta<\alpha}\binom{\alpha}{\beta} 
    \partial^\beta H\nabla\cdot\partial^{\alpha-\beta} 
    \bfv, 
    \\
    &\mathcal{J}_3^\alpha=(\partial^\alpha\bar\bfu\cdot\nabla)\bfv
    +\sum_{0<\beta<\alpha}\binom{\alpha}{\beta} 
    (\partial^\beta \bfU\cdot\nabla)
    \partial^{\alpha-\beta}
    \bfv, 
    \\
    &\mathcal{J}_4^\alpha=\partial^\alpha\bar h\nabla\sigma+\sum_{0<\beta<\alpha}
    \binom{\alpha}{\beta} 
    \partial^\beta H\nabla\partial^{\alpha-\beta}
    \sigma, 
    \\
    &\mathcal{J}_5^\alpha=2H\partial^\alpha\bar h\nabla\theta
    +\sum_{0<\beta<\alpha}
    \binom{\alpha}{\beta}\left(  
    \partial^\beta(H^2)\nabla
    \partial^{\alpha-\beta}\theta 
    +(\partial^\beta H\partial^{\alpha-\beta} H)
    \nabla\theta\right) , 
    \\
    &\mathcal{J}_6^\alpha=\partial^\alpha\bar\bfu
    \cdot\nabla\theta+\sum_{0<\beta<\alpha}
    \binom{\alpha}{\beta}\partial^{\beta}\bfU
    \cdot\nabla\partial^{\alpha-\beta}\theta , 
    \\
    &\mathcal{J}_7^\alpha=\sum_{0<\beta\leq\alpha}
    \binom{\alpha}{\beta} 
    \partial^{\alpha-\beta}\bfv\cdot\nabla 
    \partial^\beta\bar h , 
    \\
    &\mathcal{J}_8^\alpha=\sum_{0<\beta\leq\alpha}
    \binom{\alpha}{\beta} 
    (\partial^{\alpha-\beta}\bfv\cdot\nabla) 
    \partial^\beta\bar \bfu , 
    \\
    &\mathcal{J}_9^\alpha=\sum_{0<\beta\leq\alpha}
    \binom{\alpha}{\beta} 
    \partial^{\alpha-\beta}\sigma 
    \nabla\cdot\partial^\beta\bar\bfu ,
    \\
    &\mathcal{J}_{10}^\alpha=\sum_{0<\beta\leq\alpha}
    \binom{\alpha}{\beta} 
    \partial^{\alpha-\beta}\sigma\nabla\partial^\beta 
    \bar h , 
    \\
    &\mathcal{J}_{11}^\alpha=\sigma^2\nabla
    \partial^\alpha\bar S 
    +\sum_{0<\beta<\alpha}\binom{\alpha}{\beta} 
    \left(\partial^\beta(\sigma^2) 
    \nabla\partial^{\alpha-\beta}\bar S 
    +\partial^\beta\sigma
    \partial^{\alpha-\beta}\sigma\nabla\bar 
    S\right) , 
    \\
    &\mathcal{J}_{12}^\alpha=\sum_{0\leq\beta<\alpha}
    \binom{\alpha}{\beta} 
    \partial^\beta\sigma\partial^{\alpha-\beta} 
    (\bar h\nabla\bar S) , 
    \\
    &\mathcal{J}_{13}^\alpha=\sum_{0\leq\beta<\alpha}
    \binom{\alpha}{\beta} 
    (\partial^\beta\bfv)\cdot\nabla
    \partial^{\alpha-\beta}\bar S. 
\end{align*} 
We then provide estimates of their $L^2$-norms. 

\subsubsection{Bounding $\mathcal{J}^\alpha_1$} 
\label{App:J1Bds}
We note that 

\begin{align*}
    \lVert\mathcal{J}_1^\alpha\rVert_{L^2} 
    &\leq\lVert\partial^\alpha\bar\bfu\rVert_{L^\infty} 
    \lVert\nabla\sigma\rVert_{L^2} 
    +\sum_{0<\beta<\alpha}\binom{\alpha}{\beta} 
    \lVert\partial^\beta\bar\bfu\rVert
    _{L^\infty}\lVert
    \nabla\partial^{\alpha-\beta} \sigma\rVert_{L^2} 
    \\
    &+\sum_{0<\beta<\alpha}\binom{\alpha}{\beta} 
    \lVert\partial^\beta\bfv\cdot
    \nabla\partial^{\alpha-\beta}\sigma\rVert_{L^2}. 
\end{align*}
We examine each term individually. For our first two terms, we have  

\begin{align*}
    &\lVert\partial^\alpha\bar\bfu\rVert_{L^\infty} 
    \lVert\nabla\sigma\rVert_{L^2} 
    \leq A_k^*(1+t)^{B_k^*}\mathcal{E}_N(t)^\frac{1}{2}, 
    \\
    &\lVert\partial^\beta\bar\bfu\rVert
    _{L^\infty}\lVert
    \nabla\partial^{\alpha-\beta} \sigma\rVert_{L^2} 
    \leq A_k^*(1+t)^{B_k^*}
    \mathcal{E}_N(t)^\frac{1}{2}.  
\end{align*} 
For the third term, we let $N\geq4$ and we split this into two cases. 

\paragraph{Case 1: }
$|\beta|\leq N-2$
\\
We note that $H^2(\mathbb{R}^3)
\hookrightarrow L^\infty(\mathbb{R}^3)$; therefore, 

\begin{align*}
    \lVert\partial^\beta\bfv\rVert_{L^\infty} 
    \lesssim \lVert \bfv \rVert_{H^N}. 
\end{align*} 
Furthermore, we note that 

\begin{align*}
    |\alpha|-|\beta|+1\leq |\alpha|\leq N. 
\end{align*} 
It follows that  

\begin{align*}
    \lVert \nabla\partial^{\alpha-\beta} 
    \sigma\rVert_{L^2}\leq \lVert\sigma
    \rVert_{H^N}, 
\end{align*} 
giving us 

\begin{align*}
    \lVert\partial^\beta\bfv\cdot
    \nabla\partial^{\alpha-\beta}\sigma\rVert_{L^2} 
    \lesssim \lVert\bfv\rVert_{H^N} 
    \lVert\sigma
    \rVert_{H^N}. 
\end{align*} 
\paragraph{Case 2: } $|\beta|\geq N-1$ 
\\
Since $0<\beta<\alpha$, this implies that $|\beta|=N-1$. Therefore, since $N\geq 4$, 

\begin{align*}
    \lVert \nabla\partial^{\alpha-\beta} 
    \sigma\rVert_{L^\infty}\lesssim 
    \lVert\sigma\rVert_{H^N} . 
\end{align*} 
Furthermore, 

\begin{align*}
    \lVert\partial^\beta\bfv\rVert_{L^2} 
    \leq \lVert \bfv \rVert_{H^N}. 
\end{align*} 
This gives us the same estimate, 

\begin{align*}
    \lVert\partial^\beta\bfv\cdot
    \nabla\partial^{\alpha-\beta}\sigma\rVert_{L^2} 
    \lesssim \lVert\bfv\rVert_{H^N} 
    \lVert\sigma
    \rVert_{H^N}, 
\end{align*} 
implying that  

\begin{align*}
    \lVert\partial^\beta\bfv\cdot
    \nabla\partial^{\alpha-\beta}\sigma\rVert_{L^2} 
    \lesssim \mathcal{E}_N(t). 
\end{align*} 
Therefore, we have the following bound, 

\begin{align*}
    \lVert\mathcal{J}_1^\alpha\rVert_{L^2} 
    \lesssim (1+t)^{B_k^*}\mathcal{E}_N(t)^\frac{1}{2} 
    +\mathcal{E}_N(t) . 
\end{align*} 

\subsubsection{Bounds for $\mathcal{J}^\alpha_2-\mathcal{J}^\alpha_4$} 
From the same process as in Appendix \ref{App:J1Bds}, we obtain 
\begin{align*}
    &\lVert\mathcal{J}^\alpha_2\rVert_{L^2} 
    \lesssim (1+t)^{B_k^*}\mathcal{E}_N(t)^\frac{1}{2} 
    +\mathcal{E}_N(t), 
    \\
    &\lVert\mathcal{J}^\alpha_3\rVert_{L^2} 
    \lesssim (1+t)^{B_k^*}\mathcal{E}_N(t)^\frac{1}{2} 
    +\mathcal{E}_N(t) , 
    \\
    &\lVert\mathcal{J}^\alpha_4\rVert_{L^2} 
    \lesssim (1+t)^{B_k^*}\mathcal{E}_N(t)^\frac{1}{2} 
    +\mathcal{E}_N(t) . 
\end{align*} 

\subsubsection{Bounding $\mathcal{J}^\alpha_5$} 
For $\mathcal{J}_5^\alpha$, we note that 

\begin{align*}
    \mathcal{J}_5^\alpha=2H\partial^\alpha
    \bar h\nabla\theta 
    +\sum_{0<\beta<\alpha}
    \binom{\alpha}{\beta}\left(  
    \partial^\beta(H^2)\nabla
    \partial^{\alpha-\beta}\theta 
    +(\partial^\beta H\partial^{\alpha-\beta} H)
    \nabla \theta \right) . 
\end{align*} 
We examine this term by term. 

\begin{align*}
    &\lVert H\partial^\alpha\bar h\nabla\theta
    \rVert_{L^2} 
    \lesssim \lVert\partial^\alpha\bar h\rVert_{L^\infty} 
    \lVert\nabla \theta\rVert_{L^2} 
    \lesssim (1+t)^{B_k^*}\mathcal{E}_N(t)^{1/2} . 
\end{align*} 
Then the second term, 

\begin{align*}
    &\sum_{0<\beta<\alpha}
    \binom{\alpha}{\beta} 
    (\partial^\beta(H^2)\nabla
    \partial^{\alpha-\beta}\theta)
    = \sum_{0<\beta<\alpha}
    \binom{\alpha}{\beta} 
    (\partial^\beta(\bar h^2+2\bar h\sigma
    +\sigma^2)\nabla\partial^{\alpha-\beta}\theta). 
\end{align*} 
We note that 

\begin{align*}
    \lVert\partial^\beta (\bar h^2)\rVert_{L^\infty}
    \lesssim\sum_{0\leq \kappa\leq 
    \beta}\lVert\partial^\kappa 
    \bar h\rVert_{L^\infty} 
    \lVert \partial^{\beta-\kappa}\bar 
    h\rVert_{L^\infty} 
    \lesssim (1+t)^{2B_k^*}. 
\end{align*} 
Therefore, 

\begin{align*}
    \left\lVert\sum_{0<\beta<\alpha}
    \partial^\beta(\bar h^2) 
    \nabla\partial ^{\alpha-\beta}\theta
    \right\rVert_{L^2} 
    \lesssim (1+t)^{2B_k^*}\mathcal{E}_N(t)
    ^\frac{1}{2}. 
\end{align*}
Then the next term, 

\begin{align*}
    \left\lVert\sum_{0<\beta<\alpha}
    \partial^\beta(\bar h 
    \sigma)\nabla\partial^{\alpha-\beta}\theta 
    \right\rVert_{L^2}\lesssim 
    (1+t)^{B_k^*}\mathcal{E}_N(t) . 
\end{align*} 
And the last term, 

\begin{align*}
    &\sum_{0<\beta<\alpha}
    \binom{\alpha}{\beta}
    \partial^\beta(\sigma^2) 
    \nabla\partial^{\alpha-\beta}\theta 
    =\sum_{0<\beta<\alpha}
    \binom{\alpha}{\beta}\nabla
    \partial^{\alpha-\beta}\theta  
    \sum_{0\leq\kappa\leq\beta} 
    \binom{\beta}{\kappa}
    \partial^\kappa \sigma 
    \partial^{\beta-\kappa}\sigma . 
\end{align*} 
In order to bound this, we proceed case by case. 

\paragraph{Case 1:} $|\kappa|\leq N-2, |\beta-\kappa|\leq N-2, |\alpha-\beta|+1\leq N-2$.  

In this case, since $N\geq 4$, we have the following by Sobolev embedding. 
\begin{align*}
    \lVert \partial^\kappa\sigma 
    \partial^{\beta-\kappa}\sigma 
    \nabla\partial^{\alpha-\beta}\theta\rVert_{L^2} 
    &\leq\lVert\partial^\kappa\sigma\rVert_{L^\infty} 
    \lVert\partial^{\beta-\kappa}\sigma\rVert_{L^\infty} 
    \lVert\nabla\partial^{\alpha-\beta}\theta
    \rVert_{L^2} 
    \nonumber\\
    &\lesssim \lVert\sigma\rVert_{H^N}^2 
    \lVert \theta \rVert_{H^N} . 
\end{align*} 

\paragraph{Case 2:} $|\kappa|\geq N-1$ 

We observe that 
\begin{align*}
    |\kappa|+|\beta-\kappa|+|\alpha-\beta|+1\leq N+1. 
\end{align*} 
Therefore, 

\begin{align*}
    |\beta-\kappa|+|\alpha-\beta|+1\leq 2, 
\end{align*} 
which implies 

\begin{align*}
    &|\beta-\kappa|\leq 2 , 
    \\
    &|\alpha-\beta|+1\leq 2. 
\end{align*} 
Thus, we put the infinity norm on these two terms, giving us 
\begin{align*}
    \lVert \partial^\kappa\sigma 
    \partial^{\beta-\kappa}\sigma 
    \nabla\partial^{\alpha-\beta}\theta\rVert_{L^2} 
    &\leq\lVert\partial^\kappa\sigma\rVert_{L^2} 
    \lVert\partial^{\beta-\kappa}\sigma\rVert_{L^\infty} 
    \lVert\nabla\partial^{\alpha-\beta}\theta
    \rVert_{L^\infty} 
    \nonumber\\
    &\lesssim \lVert\sigma\rVert_{H^N}^2 
    \lVert \theta \rVert_{H^N} . 
\end{align*} 

\paragraph{Case 3:} $|\beta-\kappa|\geq N-1$ 

In this case, by the same argument,  
\begin{align*}
    &|\kappa|\leq 2 , 
    \\
    &|\alpha-\beta|+1\leq 2, 
\end{align*} 
giving us 

\begin{align*}
    \lVert \partial^\kappa\sigma 
    \partial^{\beta-\kappa}\sigma 
    \nabla\partial^{\alpha-\beta}\theta\rVert_{L^2} 
    &\leq\lVert\partial^\kappa\sigma\rVert_{L^\infty} 
    \lVert\partial^{\beta-\kappa}\sigma\rVert_{L^2} 
    \lVert\nabla\partial^{\alpha-\beta}\theta
    \rVert_{L^\infty} 
    \nonumber\\
    &\lesssim \lVert\sigma\rVert_{H^N}^2 
    \lVert \theta \rVert_{H^N} . 
\end{align*} 

\paragraph{Case 4:} $|\alpha-\beta|+1\geq N-1$ 
Again,  

\begin{align*}
    &|\kappa|\leq 2 , 
    \\
    &|\beta-\kappa|\leq 2, 
\end{align*} 
giving us 

\begin{align*}
    \lVert \partial^\kappa\sigma 
    \partial^{\beta-\kappa}\sigma 
    \nabla\partial^{\alpha-\beta}\theta\rVert_{L^2} 
    &\leq\lVert\partial^\kappa\sigma\rVert_{L^\infty} 
    \lVert\partial^{\beta-\kappa}\sigma\rVert_{L^\infty} 
    \lVert\nabla\partial^{\alpha-\beta}\theta\rVert_{L^2} 
    \nonumber\\
    &\lesssim \lVert\sigma\rVert_{H^N}^2 
    \lVert\theta\rVert_{H^N} 
    \lesssim\mathcal{E}_N(t)^{3/2}.  
\end{align*} 
Putting all of this together gives 

\begin{align*}
    \left\lVert 
    \sum_{0<\beta<\alpha} \binom{\alpha}{\beta} 
    \partial^\beta(\sigma^2)\nabla 
    \partial^{\alpha-\beta}\theta 
    \right\rVert_{L^2}\lesssim 
    \mathcal{E}_N(t)^{3/2}. 
\end{align*}
We finally examine the third term, 

\begin{align*}
    &\left\lVert\sum_{0<\beta<\alpha}\binom{\alpha}{\beta} 
    \partial^\beta(\bar h+\sigma) 
    \partial^{\alpha-\beta}(\bar h+\sigma) 
    \nabla \theta \right\rVert_{L^2}
    \nonumber\\
    &=\left\lVert\sum_{0<\beta<\alpha}
    \binom{\alpha}{\beta} 
    \left(\partial^\beta\bar h
    \partial^{\alpha-\beta}\bar h  
    +\partial^\beta\bar h
    \partial^{\alpha-\beta}\sigma
    +\partial^\beta\sigma
    \partial^{\alpha-\beta}\bar h
    +\partial^\beta\sigma
    \partial^{\alpha-\beta}\sigma\right)\nabla\theta 
    \right\rVert_{L^2}
    \nonumber\\
    &\lesssim (1+t)^{2B_k^*}\mathcal{E}_N(t)^{1/2} 
    +(1+t)^{B_k^*}\mathcal{E}_N(t) 
    +\mathcal{E}_N(t)^{3/2}. 
\end{align*}
Therefore, we have 

\begin{align*}
    \left\lVert\mathcal{J}_5^\alpha\right\rVert_{L^2}
    \lesssim (1+t)^{2B_k^*}\mathcal{E}_N(t)^{1/2}
    +(1+t)^{2B_k^*}\mathcal{E}_N(t) 
    +\mathcal{E}_N(t)^{3/2}. 
\end{align*}

\subsubsection{Bounding $\mathcal{J}^\alpha_6$}
By the same argument used for $\mathcal{J}_1^\alpha$, 

\begin{align*}
    &\lVert\mathcal{J}_6^\alpha\rVert_{L^2}
    \lesssim (1+t)^{B_k^*}
    \mathcal{E}_N(t)^{1/2}+\mathcal{E}_N(t). 
\end{align*}

\subsubsection{Bounds for $\mathcal{J}^\alpha_7-\mathcal{J}^\alpha_{10}$} 

The remaining commutator bounds for the isentropic case are estimated directly, 
\begin{align*}
    &\lVert\mathcal{J}^\alpha_7\rVert_{L^2} 
    \leq \sum_{0<\beta\leq\alpha}\binom{\alpha}{\beta} 
    \lVert\partial^{\alpha-\beta}\bfv\rVert_{L^2} 
    \lVert\nabla\partial^\beta\bar h\rVert_{L^\infty} 
    \lesssim (1+t)^{B_{k+1}^*}
    \mathcal{E}_N(t)^\frac{1}{2}, 
    \\
    &\lVert\mathcal{J}^\alpha_8\rVert_{L^2} 
    \leq \sum_{0<\beta\leq\alpha}\binom{\alpha}{\beta} 
    \lVert\partial^{\alpha-\beta}\bfv\rVert_{L^2} 
    \lVert\nabla\partial^\beta\bar \bfu\rVert_{L^\infty} 
    \lesssim (1+t)^{B_{k+1}^*}
    \mathcal{E}_N(t)^\frac{1}{2}, 
    \\
    &\lVert\mathcal{J}^\alpha_9\rVert_{L^2} 
    \leq \sum_{0<\beta\leq\alpha}\binom{\alpha}{\beta} 
    \lVert\partial^{\alpha-\beta}\sigma\rVert_{L^2} 
    \lVert\nabla\partial^\beta\bar \bfu\rVert_{L^\infty} 
    \lesssim (1+t)^{B_{k+1}^*}
    \mathcal{E}_N(t)^\frac{1}{2}, 
    \\
    &\lVert\mathcal{J}^\alpha_{10}\rVert_{L^2} 
    \leq \sum_{0<\beta\leq\alpha}\binom{\alpha}{\beta} 
    \lVert\partial^{\alpha-\beta}\sigma\rVert_{L^2} 
    \lVert\nabla\partial^\beta\bar h\rVert_{L^\infty} 
    \lesssim (1+t)^{B_{k+1}^*}
    \mathcal{E}_N(t)^\frac{1}{2}. 
\end{align*} 

\subsubsection{Bounds for $\mathcal{J}^\alpha_{11}-\mathcal{J}^\alpha_{13}$} 

Finally, we expand the last entropy-dependent terms which vanish identically in the isentropic case. 

\begin{align}
    \left\lVert\mathcal{J}_{11}^\alpha
    \right\rVert_{L^2}
    &\lesssim
    \left\lVert\nabla\bar S\right\rVert_{W^{k,\infty}}
    \left\lVert\sigma\right\rVert_{H^N}^2
    \lesssim
    \left\lVert\nabla\bar S\right\rVert_{W^{k,\infty}}
    \mathcal{E}_N(t),
    \label{Eqn:J11GeneralBound}
    \\
    \left\lVert\mathcal{J}_{12}^\alpha
    \right\rVert_{L^2}
    &\lesssim
    \left\lVert\bar h\nabla\bar S
    \right\rVert_{W^{k,\infty}}
    \left\lVert\sigma\right\rVert_{H^N}
    \lesssim
    \left\lVert\bar h\right\rVert_{W^{k,\infty}}
    \left\lVert\nabla\bar S\right\rVert_{W^{k,\infty}}
    \mathcal{E}_N(t)^{1/2},
    \label{Eqn:J12GeneralBound}
    \\
    \left\lVert\mathcal{J}_{13}^\alpha
    \right\rVert_{L^2}
    &\lesssim
    \left\lVert\nabla\bar S\right\rVert_{W^{k,\infty}}
    \left\lVert\bfv\right\rVert_{H^N}
    \lesssim
    \left\lVert\nabla\bar S\right\rVert_{W^{k,\infty}}
    \mathcal{E}_N(t)^{1/2}.
    \label{Eqn:J13GeneralBound}
\end{align}

\subsection{Derivation of Higher Order Energy Terms} 
\label{App:HigherOrderEnergy}

In this section, we manipulate our differentiated system in order to obtain the desired time evolution of the $k$-th order energy. Starting from 

\begin{align*}
    &(\partial^\alpha\sigma)_t
    +\bfU\cdot\nabla\partial^\alpha\sigma
    +aH\nabla\cdot\partial^\alpha\bfv=
    R^\alpha_1, 
    \\ 
    &(\partial^\alpha\bfv)_t
    +(\bfU\cdot\nabla)\partial^\alpha\bfv
    +\frac{1}{a}H\nabla\partial^\alpha\sigma 
    -\frac{1}{2a\gamma}H^2\nabla\partial^\alpha 
    \theta=R^\alpha_2 , 
    \\
    &(\partial^\alpha \theta)_t+\bfU\cdot\nabla 
    \partial^\alpha \theta = R^\alpha_3, 
\end{align*} 
we multiply the first equation by 

\begin{align*}
    \partial^\alpha\sigma-\frac{H}{2\gamma} 
    \partial^\alpha \theta, 
\end{align*} 
giving us 

\begin{align}
    \frac{1}{2}&((\partial^\alpha\sigma)^2)_t 
    -\frac{H}{2\gamma}\partial^\alpha\theta 
    (\partial^\alpha\sigma)_t 
    +\frac{1}{2}\bfU\cdot\nabla
    (\partial^\alpha\sigma)^2 
    -\frac{H}{2\gamma}\partial^\alpha\theta 
    \bfU\cdot\nabla\partial^\alpha\sigma 
    \nonumber\\
    &+aH\partial^\alpha\sigma\nabla\cdot
    \partial^\alpha\bfv-\frac{aH^2}{2\gamma}
    \partial^\alpha \theta\nabla\cdot\partial^\alpha 
    \bfv=
    \left(\partial^\alpha\sigma
    -\frac{H}{2\gamma}\partial^\alpha
    \theta\right)R^\alpha_1. 
    \label{Eqn:Sigma_Alpha_Int_0}
\end{align} 
We then multiply the second equation by $a^2\partial^\alpha\bfv$ to obtain  

\begin{align}
    \frac{a^2}{2}(|\partial^\alpha\bfv|^2)_t
    &+\frac{a^2}{2}(\bfU\cdot\nabla)
    |\partial^\alpha\bfv|^2
    +aH\partial^\alpha\bfv\cdot\nabla
    \partial^\alpha\sigma 
    \nonumber\\
    &-\frac{a}{2\gamma}H^2\partial^\alpha\bfv\cdot
    \nabla\partial^\alpha \theta
    =a^2\partial^\alpha\bfv\cdot R^\alpha_2. 
    \label{Eqn:V_Alpha_Int_0}
\end{align} 
Multiplying the third equation by 

\begin{align*}
    &\lambda\partial^\alpha \theta
    -\frac{H}{2\gamma}\partial^\alpha\sigma 
\end{align*} 
yields 

\begin{align}
    \frac{\lambda}{2}((\partial^\alpha 
    \theta)^2)_t 
    &-\frac{H}{2\gamma}\partial^\alpha\sigma 
    (\partial^\alpha \theta)_t 
    +\frac{\lambda}{2}\bfU\cdot \nabla 
    (\partial^\alpha \theta)^2 
    \nonumber\\
    &-\frac{H}{2\gamma}\partial^\alpha\sigma 
    \bfU\cdot\nabla\partial^\alpha \theta 
    =\left(\lambda\partial^\alpha \theta 
    -\frac{H}{2\gamma}\partial^\alpha\sigma
    \right)R_3^\alpha. 
    \label{Eqn:S_Alpha_Int_0}
\end{align} 
Adding (\ref{Eqn:Sigma_Alpha_Int_0})--(\ref{Eqn:S_Alpha_Int_0}) gives  

\begin{align}
    &\frac{1}{2}\partial_t\left( 
    (\partial^\alpha\sigma)^2+a^2
    |\partial^\alpha\bfv|^2 
    +\lambda(\partial^\alpha\theta)^2 
    -\frac{1}{\gamma} H
    \partial^\alpha\theta
    \partial^\alpha\sigma\right) 
    +\frac{1}{2\gamma}\partial^\alpha\theta
    \partial^\alpha\sigma H_t= 
    \nonumber\\
    &-\frac{1}{2}\left( 
    \bfU\cdot\nabla\left((\partial^\alpha\sigma)^2 
    +a^2|\partial^\alpha\bfv|^2 
    +\lambda(\partial^\alpha\theta)^2 
    \right)
    -\frac{H}{\gamma}\partial^\alpha\sigma 
    \bfU\cdot\nabla\partial^\alpha\theta 
    -\frac{H}{\gamma}\partial^\alpha\theta 
    \bfU\cdot\nabla\partial^\alpha\sigma
    \right) 
    \nonumber\\
    &-aH\partial^\alpha\sigma\nabla\cdot
    \partial^\alpha\bfv+\frac{aH^2}{2\gamma} 
    \partial^\alpha\theta\nabla\cdot\partial^\alpha\bfv 
    +\left(\partial^\alpha\sigma-\frac{H}{2\gamma} 
    \partial^\alpha\theta\right)R_1^\alpha 
    -aH\partial^\alpha\bfv\cdot\nabla
    \partial^\alpha\sigma 
    \nonumber\\
    &+\frac{a}{2\gamma}H^2\partial^\alpha\bfv 
    \cdot\nabla\partial^\alpha\theta 
    +a^2\partial^\alpha\bfv\cdot R_2^\alpha 
    +\left(\lambda\partial^\alpha\theta 
    -\frac{H}{2\gamma}\partial^\alpha\sigma
    \right)R_3^\alpha, 
    \label{Eqn:ptwseDensityUnsimplified}
\end{align} 
which, while complicated, contains the desired evolution of the pointwise energy density. We then move to simplify (\ref{Eqn:ptwseDensityUnsimplified}). First, note that 

\begin{align*}
    -\frac{1}{2\gamma}\partial^\alpha\theta 
    \partial^\alpha\sigma H_t 
    =\frac{1}{2\gamma}\partial^\alpha\theta 
    \partial^\alpha\sigma\left( 
    \bfU\cdot\nabla H+aH\nabla\cdot\bfU\right). 
\end{align*} 
Therefore, 

\begin{align*}
    &\frac{H}{2\gamma}\partial^\alpha\sigma 
    \bfU\cdot\nabla\partial^\alpha\theta 
    +\frac{H}{2\gamma}\partial^\alpha\theta 
    \bfU\cdot\nabla\partial^\alpha\sigma 
    +\frac{1}{2\gamma}\partial^\alpha\theta 
    \partial^\alpha\sigma\bfU\cdot\nabla H 
    =\frac{1}{2\gamma}\bfU\cdot\nabla(
    H\partial^\alpha\sigma\partial^\alpha\theta) , 
\end{align*}
and, 

\begin{align*}
    &-aH\partial^\alpha\sigma\nabla\cdot 
    \partial^\alpha\bfv+\frac{aH^2}{2\gamma} 
    \partial^\alpha\theta\nabla\cdot\partial^\alpha\bfv 
    -aH\partial^\alpha\bfv\cdot\nabla
    \partial^\alpha\sigma 
    +\frac{aH^2}{2\gamma}\partial^\alpha\bfv 
    \cdot\nabla\partial^\alpha\theta 
    \nonumber\\
    &=-a\nabla\cdot\left(H\left( 
    \partial^\alpha\sigma-\frac{H}{2\gamma}
    \partial^\alpha\theta\right)\partial^\alpha\bfv 
    \right)+a\left(\partial^\alpha\sigma 
    -\frac{H}{\gamma}\partial^\alpha\theta\right)
    \partial^\alpha\bfv\cdot\nabla H, 
\end{align*} 
which gives us 

\begin{align*}
    &\frac{1}{2}\partial_t e_\alpha 
    =-\frac{1}{2}\bfU\cdot\nabla e_\alpha 
    +\frac{aH}{2\gamma}\partial^\alpha\theta 
    \partial^\alpha\sigma\nabla\cdot\bfU 
    \nonumber\\
    &-a\nabla\cdot\left(H\left( 
    \partial^\alpha\sigma-\frac{H}{2\gamma}
    \partial^\alpha\theta\right)\partial^\alpha\bfv 
    \right)+a\left(\partial^\alpha\sigma 
    -\frac{H}{\gamma}\partial^\alpha\theta\right)
    \partial^\alpha\bfv\cdot\nabla H 
    \nonumber\\
    &+\left(\partial^\alpha\sigma-\frac{H}{2\gamma} 
    \partial^\alpha\theta\right)R_1^\alpha 
    +a^2\partial^\alpha\bfv\cdot R_2^\alpha 
    +\left(\lambda\partial^\alpha\theta 
    -\frac{H}{2\gamma}\partial^\alpha\sigma
    \right)R_3^\alpha . 
\end{align*}
Furthermore, Sobolev regularity of $(\sigma,\bfv,\theta)$ implies 

\begin{align*}
    -\frac{1}{2}\int_{\mathbb{R}^3}\bfU 
    \cdot\nabla e_\alpha \, dx = 
    \frac{1}{2}\int_{\mathbb{R}^3}e_\alpha 
    \nabla\cdot\bfU \, dx. 
\end{align*}
Therefore, 

\begin{align}
    &\frac{d}{dt} E_k(t) 
    = \frac{1}{2}\sum_{|\alpha|=k}\int_{\mathbb{R}^3} 
    e_\alpha \nabla\cdot\bfU \, dx
    +\sum_{|\alpha|=k}\biggl[\int_{\mathbb{R}^3}\frac{aH}{2\gamma}
    \partial^\alpha \theta 
    \partial^\alpha\sigma\nabla\cdot\bfU dx
    \nonumber\\
    &-a\int_{\mathbb{R}^3}\nabla\cdot\left(H\left( 
    \partial^\alpha\sigma-\frac{H}{2\gamma}
    \partial^\alpha\theta\right)\partial^\alpha\bfv 
    \right)dx
    \nonumber\\
    &+a\int_{\mathbb{R}^3}
    \left(\partial^\alpha\sigma 
    -\frac{H}{\gamma}\partial^\alpha\theta\right)
    \partial^\alpha\bfv\cdot\nabla H dx
    \nonumber\\
    &+\int_{\mathbb{R}^3}
    \left(\partial^\alpha\sigma-\frac{H}{2\gamma} 
    \partial^\alpha\theta\right)R_1^\alpha dx
    +a^2\int_{\mathbb{R}^3}
    \partial^\alpha\bfv\cdot R_2^\alpha dx
    \nonumber\\
    &+\int_{\mathbb{R}^3}\left(\lambda
    \partial^\alpha \theta 
    -\frac{H}{2\gamma}\partial^\alpha\sigma
    \right)R_3^\alpha dx\biggr]. 
    \label{Eqn:ddtEkUnsimplified}
\end{align} 
Using this, we bound each term in (\ref{Eqn:ddtEkUnsimplified}). 

\begin{align*}
    \left|\sum_{|\alpha|=k}\int_{\mathbb{R}^3} 
    e_\alpha\nabla\cdot\bfU dx \right|
    \lesssim \left(\lVert\nabla\bar\bfu 
    \rVert_{L^\infty}+\lVert\nabla\bfv 
    \rVert_{L^\infty}\right)E_k(t) . 
\end{align*} 
Then the next term, 

\begin{align*}
    \left|\int_{\mathbb{R}^3}H\partial^\alpha\theta
    \partial^\alpha\sigma\nabla\cdot\bfU dx\right| 
    \lesssim \left(\lVert\nabla\bar\bfu 
    \rVert_{L^\infty}+\lVert\nabla\bfv 
    \rVert_{L^\infty}\right)E_k(t). 
\end{align*} 
Based on the Sobolev regularity of $(\sigma,\bfv,\theta)$, the following term vanishes, 

\begin{align*}
    a\int_{\mathbb{R}^3}\nabla\cdot\left(H\left( 
    \partial^\alpha\sigma-\frac{H}{2\gamma}
    \partial^\alpha\theta\right)\partial^\alpha\bfv 
    \right)dx=0. 
\end{align*} 
The next term is then bounded as 

\begin{align*}
    \left|\int_{\mathbb{R}^3}
    \left(\partial^\alpha\sigma 
    -\frac{H}{\gamma}\partial^\alpha\theta\right)
    \partial^\alpha\bfv\cdot\nabla H dx \right| 
    \lesssim (\lVert \nabla \bar h \rVert_{L^\infty}
    +\lVert \nabla \sigma \rVert_{L^\infty}) 
    E_k(t). 
\end{align*} 
Then the last three terms:  

\begin{align*}
    \left|\int_{\mathbb{R}^3}
    \biggl(\partial^\alpha\sigma-\frac{H}{2\gamma} 
    \partial^\alpha\theta\biggr)R_1^\alpha dx \right| 
    \lesssim \biggl(&\lVert \nabla\sigma 
    \rVert_{L^\infty}+\lVert \nabla\bfv 
    \rVert_{L^\infty}+\lVert \nabla\bar\bfu 
    \rVert_{L^\infty}
    \nonumber\\
    +&\lVert \nabla\bar h 
    \rVert_{L^\infty}\biggr)E_k(t) 
    +\lVert\mathcal{F}^\alpha
    \rVert_{L^2}E_k(t)^{1/2}, 
\end{align*} 
and 

\begin{align*}
    \left|\int_{\mathbb{R}^3}
    \partial^\alpha\bfv\cdot R_2^\alpha dx\right| 
    \lesssim \biggl(&\lVert\nabla\bfv 
    \rVert_{L^\infty}+\lVert\nabla\sigma 
    \rVert_{L^\infty}+\lVert\nabla\theta 
    \rVert_{L^\infty}+\lVert\nabla\bar\bfu 
    \rVert_{L^\infty}\nonumber\\
    +&\lVert\nabla\bar h 
    \rVert_{L^\infty}
    +\lVert\nabla \bar S 
    \rVert_{L^\infty}\biggr)E_k(t)+
    \lVert\mathcal{G}^\alpha 
    \rVert_{L^2}E_k(t)^{1/2}, 
\end{align*} 
where our bootstrap assumption on $H$ is used. Then, 

\begin{align*}
    \left|\int_{\mathbb{R}^3}\left(\lambda
    \partial^\alpha \theta 
    -\frac{H}{2\gamma}\partial^\alpha\sigma
    \right)R_3^\alpha dx\right| 
    \lesssim\left(\lVert\nabla\theta\rVert_{L^\infty} 
    +\lVert\nabla\bar S\rVert_{L^\infty}\right)
    E_k(t)+\lVert\mathcal{H}^\alpha\rVert_{L^2} 
    E_k(t)^{1/2}. 
\end{align*} 
Adding these and summing over $|\alpha|=k$ gives us 

\begin{align}
    \frac{d}{dt}E_k(t)&\lesssim \biggl(\lVert\nabla\bfv 
    \rVert_{L^\infty}+\lVert\nabla\sigma 
    \rVert_{L^\infty}+\lVert\nabla\theta 
    \rVert_{L^\infty}+\lVert\nabla\bar\bfu 
    \rVert_{L^\infty}+\lVert\nabla\bar h 
    \rVert_{L^\infty}\nonumber\\
    +&\lVert\nabla \bar S 
    \rVert_{L^\infty}\biggr)E_k(t)
    +\sum_{|\alpha|=k}\left(
    \lVert\mathcal{F}^\alpha\rVert_{L^2} 
    +\lVert\mathcal{G}^\alpha\rVert_{L^2}
    +\lVert\mathcal{H}^\alpha\rVert_{L^2}\right) 
    E_k(t)^{1/2} . 
    \label{Eqn:ddtEkApp}
\end{align}

\end{appendices}

\section*{Acknowledgements} 

The author gratefully acknowledges support from the National Science Foundation under Award No. DMS-2306258 and from the University of Kansas General Research Fund. 

The author used OpenAI's ChatGPT for mathematical discussion, language editing, and assistance in checking calculations. All AI-assisted suggestions were independently reviewed and verified by the author, who takes full responsibility for the content of the paper. 

\bibliographystyle{abbrv}
\bibliography{references}

\end{document}